\documentclass[11pt,reqno]{amsart}
\pdfoutput=1 

\usepackage[english]{babel}
\usepackage[utf8]{inputenc}
\usepackage[T1]{fontenc}
\usepackage{lmodern} \normalfont 
\DeclareFontShape{T1}{lmr}{bx}{sc} { <-> ssub * cmr/bx/sc }{}
\usepackage{microtype}	

\usepackage{amssymb}
\usepackage{amsmath}
\usepackage{amsthm}
\usepackage{mathtools}
\usepackage{mathrsfs}
\usepackage{accents} 
\usepackage{etoolbox}
\usepackage{dsfont} 

\usepackage{graphicx}
\usepackage{color}
\usepackage[dvipsnames]{xcolor}
\usepackage{tikz}
\usetikzlibrary{calc,positioning,shapes}
\usepackage{pgfplots}
\pgfplotsset{colormap name=viridis,compat=newest}
\usepackage[margin=10pt,font=small,labelfont=bf,labelsep=endash]{caption}
\usepackage{subcaption}

\usepackage{booktabs}
\usepackage{paralist}

\numberwithin{equation}{section}

\usepackage{enumitem} 
\setlist[enumerate]{label=(\roman*)}
\usepackage{xspace}

\usepackage[colorlinks,linkcolor=teal,citecolor=teal,urlcolor=teal]{hyperref}
\usepackage[nameinlink,noabbrev]{cleveref}
\AddToHook{cmd/appendix/before}{%
    \crefalias{section}{appendix}%
    \crefalias{subsection}{appendix}
}

\theoremstyle{plain}
\newtheorem{theorem}{Theorem}[section]
\newtheorem{proposition}[theorem]{Proposition}
\AddToHook{env/proposition/begin}{\crefalias{theorem}{proposition}}
\Crefname{proposition}{Proposition}{Propositions}

\newtheorem{lemma}[theorem]{Lemma}
\AddToHook{env/lemma/begin}{\crefalias{theorem}{lemma}}
\Crefname{lemma}{Lemma}{Lemmata}

\newtheorem{corollary}[theorem]{Corollary}
\AddToHook{env/corollary/begin}{\crefalias{theorem}{corollary}}
\Crefname{corollary}{Corollary}{Corollaries}

\newtheorem{remark}[theorem]{Remark}
\AddToHook{env/remark/begin}{\crefalias{theorem}{remark}}
\Crefname{remark}{Remark}{Remarks}

\AddToHook{env/definition/begin}{\crefalias{theorem}{definition}}
\Crefname{definition}{Definition}{Definitions}

\newtheorem{assumption}[theorem]{Assumption}
\AddToHook{env/assumption/begin}{\crefalias{theorem}{assumption}}
\Crefname{assumption}{Assumption}{Assumptions}

\newtheorem{example}[theorem]{Example}
\AddToHook{env/example/begin}{\crefalias{theorem}{example}}
\Crefname{example}{Example}{Examples}

\newcommand{\R}{\ensuremath\mathbb{R}}
\newcommand{\C}{\ensuremath\mathbb{C}}

\newcommand{\T}{\ensuremath\mathsf{T}}

\newcommand{\dtheta}{\,\mathrm{d}\theta}
\newcommand{\eul}{\mathrm{e}}

\newcommand{\hook}{\ensuremath{\hookrightarrow}}

\DeclareMathOperator{\id}{id}

\DeclareMathOperator{\diag}{diag}

\DeclareMathOperator{\real}{Re}
\DeclareMathOperator{\imag}{Im}
\DeclareMathOperator{\tol}{TOL}
\DeclareMathOperator{\tr}{tr}

\newcommand{\vertiii}[1]{{\left\vert\kern-0.25ex\left\vert\kern-0.25ex\left\vert #1 \right\vert\kern-0.25ex\right\vert\kern-0.25ex\right\vert}}

\newcommand{\cA}{\ensuremath{\mathcal{A}}}
\newcommand{\cB}{\ensuremath{\mathcal{B}}}
\newcommand{\cC}{\ensuremath{\mathcal{C}}}
\newcommand{\cD}{\ensuremath{\mathcal{D}}}
\newcommand{\cE}{\ensuremath{\mathcal{E}}}
\newcommand{\cF}{\ensuremath{\mathcal{F}}}

\newcommand{\cH}{\ensuremath{\mathcal{H}}}

\newcommand{\cK}{\ensuremath{\mathcal{K}}}
\newcommand{\cL}{\ensuremath{\mathcal{L}}}
\newcommand{\cM}{\ensuremath{\mathcal{M}}}

\newcommand{\cO}{\ensuremath{\mathcal{O}}}

\newcommand{\cQ}{\ensuremath{\mathcal{Q}}}
\newcommand{\cR}{\ensuremath{\mathcal{R}}}

\newcommand{\cT}{\ensuremath{\mathcal{T}}}

\newcommand{\cV}{\ensuremath{\mathcal{V}}}

\newcommand{\cX}{\ensuremath{\mathcal{X}}}
\newcommand{\cY}{\ensuremath{\mathcal{Y}}}

\newcommand{\cHV}{\ensuremath{\cH_{\scalebox{.5}{\cV}}}}
\newcommand{\cHQ}{\ensuremath{\cH_{\scalebox{.5}{\cQ}}}}

\newcommand{\A}{\ensuremath\mathbb{A}} 

\newcommand{\lineWidth}{1.2pt}

\definecolor{color0}{rgb}{1.0, 0.0, 0.0}
\definecolor{color1}{rgb}{0.0, 0.0, 1.0}
\definecolor{color2}{rgb}{0.1, 0.2, 0.9}
\definecolor{color3}{rgb}{0.9, 0.2, 0.1}
\definecolor{color4}{rgb}{0.8, 0.2, 0.8}
\definecolor{color5}{rgb}{0,0,0}

\definecolor{rabred}{rgb}{1.0, 0.0, 0.0}
\definecolor{rabblue}{rgb}{0.0, 0.0, 1.0}
\definecolor{rabcol}{rgb}{0.1, 0.2, 0.9}

\definecolor{crimson2143940}{RGB}{214,39,40}
\definecolor{darkgray176}{RGB}{176,176,176}
\definecolor{darkorange25512714}{RGB}{255,127,14}
\definecolor{forestgreen4416044}{RGB}{44,160,44}
\definecolor{gray127}{RGB}{127,127,127}
\definecolor{mediumpurple148103189}{RGB}{148,103,189}
\definecolor{orchid227119194}{RGB}{227,119,194}
\definecolor{sienna1408675}{RGB}{140,86,75}
\definecolor{steelblue31119180}{RGB}{31,119,180}

\definecolor{cbsblue}{RGB}{68,119,170}
\definecolor{cbscyan}{RGB}{102,204,238}
\definecolor{cbsgreen}{RGB}{34,136,51}
\definecolor{cbsyellow}{RGB}{204,187,68}
\definecolor{cbsred}{RGB}{238,102,119}
\definecolor{cbspurple}{RGB}{170,51,119}
\definecolor{cbsgrey}{RGB}{187,187,187}

\definecolor{mycolor1}{rgb}{0.00000,0.44700,0.74100}
\definecolor{mycolor2}{rgb}{0.85000,0.32500,0.09800}
\definecolor{mycolor3}{rgb}{0.92900,0.69400,0.12500}
\definecolor{mycolor4}{rgb}{0.46600,0.67400,0.18800}
\definecolor{mycolor5}{rgb}{0.49400,0.18400,0.55600}
\definecolor{mycolor6}{rgb}{0.16000,0.49000,0.49000}%
\definecolor{mycolor7}{rgb}{0.10000,0.49000,0.80000}%
\definecolor{mycolor8}{rgb}{0.06640,0.46484,0.19921} 
\definecolor{mycolor9}{rgb}{0.59765,0.59765,0.19921} 

\pgfplotscreateplotcyclelist{summationsol}{%
line width = \lineWidth, color=mycolor1\\
line width = \lineWidth, color=mycolor2\\
line width = \lineWidth, color=mycolor3\\
line width = \lineWidth, color=mycolor4\\
line width = \lineWidth, color=mycolor5\\
line width = \lineWidth, color=mycolor6\\
line width = \lineWidth, color=mycolor9\\
}

\pgfplotscreateplotcyclelist{toomuchdata}{%
semithick, color=forestgreen4416044\\
semithick, color=darkorange25512714\\
semithick, color=gray127\\
semithick, color=mediumpurple148103189\\
semithick, color=orchid227119194\\
semithick, color=sienna1408675\\
semithick, color=steelblue31119180\\
semithick, color=crimson2143940\\
}

\pgfplotscreateplotcyclelist{errresnorm}{%
thick, color=forestgreen4416044\\
thick, color=darkorange25512714\\
thick, color=gray127\\
semithick, color=forestgreen4416044\\
semithick, color=darkorange25512714\\
semithick, color=gray127\\
}

\pgfplotscreateplotcyclelist{rab}{%
dotted, color=color0, every mark/.append style={scale=2, solid, fill=color0}, mark=o\\
dashdotted, color=color0, every mark/.append style={scale=2, solid, fill=color1}, mark=o\\
solid, color=color0, every mark/.append style={scale=2, solid, fill=gray},mark=o\\
dotted, color=color1, every mark/.append style={scale=2, solid, fill=color0},mark=triangle\\
dashdotted, color=color1, every mark/.append style={scale=2, solid, fill=color1},mark=triangle\\
solid, color=color1, every mark/.append style={scale=2, solid, fill=gray},mark=triangle\\
dotted, every mark/.append style={solid, fill=gray},mark=*\\
dashdotted, every mark/.append style={solid, fill=gray},mark=square*\\
solid, every mark/.append style={solid, fill=gray}, mark=triangle*\\
}

\pgfplotscreateplotcyclelist{methodcompare}{%
color=cbsblue, every mark/.append style={scale=1.8, solid, fill=cbsblue}, very thick, mark=o\\
color=cbsblue, every mark/.append style={scale=2.2, solid, fill=cbsblue}, very thick, mark=diamond\\
color=cbsblue, every mark/.append style={scale=2, solid, fill=cbsblue}, very thick, mark=triangle\\
densely dashed, color=cbsred, every mark/.append style={scale=1.8, solid, fill=cbsred}, very thick, mark=o\\
densely dashed, color=cbsred, every mark/.append style={scale=2.2, solid, fill=cbsred}, very thick, mark=diamond\\
densely dashed , color=cbsred, every mark/.append style={scale=2, solid, fill=cbsred}, very thick, mark=triangle\\
loosely dotted, every mark/.append style={solid, fill=gray}, mark=*\\
loosely dotted, every mark/.append style={scale=1.2, solid, fill=gray}, mark=diamond*\\
loosely dotted, every mark/.append style={scale=1.2, solid, fill=gray}, mark=triangle*\\
}

\pgfplotscreateplotcyclelist{rkconv}{%
densely dashed, color=cbsred, every mark/.append style={scale=1.8, solid, fill=cbsred}, very thick, mark=o\\
densely dashed, color=cbsred, every mark/.append style={scale=2.2, solid, fill=cbsred}, very thick, mark=triangle\\
densely dashed, color=cbsred, every mark/.append style={scale=2.0, solid, fill=cbsred}, very thick, mark=pentagon\\
color=cbsblue, every mark/.append style={scale=1.8, solid, fill=cbsblue}, very thick, mark=o\\
color=cbsblue, every mark/.append style={scale=2.2, solid, fill=cbsblue}, very thick, mark=triangle\\
color=cbsblue, every mark/.append style={scale=2.0, solid, fill=cbsblue}, very thick, mark=pentagon\\
loosely dashed, every mark/.append style={solid, fill=gray}, mark=*\\
loosely dotted, every mark/.append style={scale=1.2, solid, fill=gray}, mark=triangle*\\
dash dot, every mark/.append style={scale=1.2, solid, fill=gray}, mark=pentagon*\\
}

\pgfplotscreateplotcyclelist{rkconvIter}{%
densely dashed, color=cbsred, every mark/.append style={scale=1.8, solid, fill=cbsred}, very thick, mark=o\\
densely dashed, color=cbsred, every mark/.append style={scale=2.2, solid, fill=cbsred}, very thick, mark=triangle\\
densely dashed, color=cbsred, every mark/.append style={scale=2.0, solid, fill=cbsred}, very thick, mark=pentagon\\
color=cbsblue, every mark/.append style={scale=1.8, solid, fill=cbsblue}, very thick, mark=o\\
color=cbsblue, every mark/.append style={scale=2.2, solid, fill=cbsblue}, very thick, mark=triangle\\
color=cbsblue, every mark/.append style={scale=2.0, solid, fill=cbsblue}, very thick, mark=pentagon\\
color=cbsgreen, every mark/.append style={scale=1.8, solid, fill=cbsgreen}, very thick, mark=o\\
color=cbsgreen, every mark/.append style={scale=2.2, solid, fill=cbsgreen}, very thick, mark=triangle\\
color=cbsgreen, every mark/.append style={scale=2.0, solid, fill=cbsgreen}, very thick, mark=pentagon\\
loosely dashed, every mark/.append style={solid, fill=gray}, mark=*\\
loosely dotted, every mark/.append style={scale=1.2, solid, fill=gray}, mark=triangle*\\
dash dot, every mark/.append style={scale=1.2, solid, fill=gray}, mark=pentagon*\\
}

\pgfplotscreateplotcyclelist{iterativeconv}{%
color=mycolor1, every mark/.append style={scale=1.8, solid, fill=mycolor1}, thick, mark=o\\
color=mycolor1, every mark/.append style={scale=2.2, solid, fill=mycolor1}, thick, mark=diamond\\
color=mycolor1, every mark/.append style={scale=2, solid, fill=mycolor1}, thick, mark=triangle\\
loosely dashed, color=mycolor4, every mark/.append style={scale=1.8, solid, fill=mycolor4}, thick, mark=o\\
densely dashed, color=mycolor4, every mark/.append style={scale=2.2, solid, fill=mycolor4}, thick, mark=diamond\\
loosely dotted, color=mycolor4, every mark/.append style={scale=2, solid, fill=mycolor4}, thick, mark=triangle\\
loosely dashed, every mark/.append style={solid, fill=gray}, mark=*\\
densely dashed, every mark/.append style={scale=1.2, solid, fill=gray}, mark=diamond*\\
loosely dotted, every mark/.append style={scale=1.2, solid, fill=gray}, mark=triangle*\\
densely dotted, every mark/.append style={solid, fill=gray}, mark=square*\\
densely dashdotted,every mark/.append style={solid, fill=gray}, mark=pentagon*\\
}

\pgfplotscreateplotcyclelist{itertol}{%
loosely dashed, color=crimson2143940, every mark/.append style={scale=1.6, solid, fill=crimson2143940}, thick, mark=*\\
densely dashed, color=darkorange25512714, every mark/.append style={scale=2.0, solid, fill=darkorange25512714}, thick, mark=diamond*\\
loosely dotted, color=steelblue31119180, every mark/.append style={scale=1.9, solid, fill=steelblue31119180}, thick, mark=triangle*\\
densely dotted, color=forestgreen4416044, every mark/.append style={scale=1.3, solid, fill=forestgreen4416044}, thick, mark=square*\\
every mark/.append style={solid, fill=gray}\\
densely dashed, every mark/.append style={solid, fill=gray}, mark=triangle*
loosely dotted, every mark/.append style={solid, fill=gray}, mark=*\\%
densely dotted, every mark/.append style={solid, fill=gray}, mark=square*\\%
densely dashdotted,every mark/.append style={solid, fill=gray}, mark=pentagon*\\%
}

\newcommand{\abbr}[1]{#1\xspace}
\newcommand{\BDF}{\abbr{BDF}}

\newcommand{\RK}{\abbr{RK}}

\newcommand{\smex}{\sigma} 

\newcommand{\RKA}{\A}
\newcommand{\RKb}{\mathbf{\beta}}
\newcommand{\RKc}{\mathbf{\mathbb{\chi}}}
\newcommand{\abstractE}{\cE} 
\newcommand{\abstractF}{\cF} 
\newcommand{\abstractG}{h} 

\newcommand{\delayOper}[1]{\Psi_{#1}}
\newcommand{\delayOperK}{\delayOper{k}}

\title[Decoupling Runge--Kutta schemes for elliptic--parabolic problems]{Decoupling Runge--Kutta schemes for\\ elliptic--parabolic problems}
\author{Robert Altmann${}^\dagger$ \and Abdullah Mujahid${}^{\star}$ \and Benjamin Unger${}^\ddagger$}

\address{${}^{\dagger}$ Institute of Analysis and Numerics, Otto von Guericke University Magdeburg, Universit\"atsplatz 2, 39106 Magdeburg, Germany}
\email{robert.altmann@ovgu.de}
\address{${}^{\star}$ Stuttgart Center for Simulation Science (SC SimTech), University of Stuttgart, Universit\"{a}tsstr.~32, 70569 Stuttgart, Germany}
\email{abdullah.mujahid@simtech.uni-stuttgart.de}
\address{${}^{\ddagger}$ Institute for Applied and Numerical Mathematics, Karlsruhe Institute of Technology, Englerstr. 2, 76131 Karlsruhe, Germany}
\email{benjamin.unger@kit.edu}
\date{\today}

\begin{document}

\begin{abstract}
We study the construction and convergence of semi-explicit and
iterative decoupling schemes for an elliptic--parabolic problem using higher-order Runge--Kutta methods. 
For the semi-explicit schemes, which are constructed using a nearby delay system with $k$ time delays, we establish the convergence of $k$th-order Runge--Kutta methods under a weak coupling condition. 
We develop the convergence analysis by adapting the Fourier stability and perturbation techniques of [Lubich, Ostermann, Math.~Comp., 64(210):601--627, 1995]. 
The key tool is the generating function framework,
in which the Runge--Kutta discretization is encoded through an operator-valued function. 
Stability estimates are then obtained via Parseval's identity on the unit circle.
We further present convergence results for iterative
(fixed-stress and undrained-split) higher-order Runge--Kutta schemes. Here, a spectral decomposition of the Schur complement operator is central.
Finally, we provide numerical examples to verify the proven convergence results. 
\end{abstract}

\maketitle
{\footnotesize \textsc{Keywords:} Runge--Kutta methods, Fourier stability, semi-explicit schemes, iterative decoupling} 

{\footnotesize \textsc{AMS subject classification:} 65M12, 65J10}


\section{Introduction}
\label{sec:intro}

This article explores decoupling time integration schemes based on
implicit Runge--Kutta (\RK) methods for linear elliptic--parabolic problems, including the equations of poroelasticity~\cite{Bio41}.
Typical applications involve biomechanics,
where the human brain and heart are modeled as poroelastic
media with multiple fluid networks~\cite{VarCT+16,EliRT23}, as well as geomechanics~\cite{Zob10}.

The well-posedness of the considered elliptic--parabolic problem
is studied in~\cite{Sho00}, its spatial discretization discussed in~\cite{ErnM09}. 
To reduce the computational effort of the coupling,
several decoupling strategies exist, based on fixed-point iterations combined with an implicit Euler discretization in time~\cite{MikW13,KimTJ11a,KimTJ11b}.
In \cite{AltMU24b}, fixed-point iterations for operator splitting were combined with higher-order backward differentiation formulae (\BDF) up to order $5$.
An alternative approach based on semi-explicit methods,
which decouples the system through a time delay approximation,
was introduced for the implicit Euler method
in~\cite{AltMU21b} and extended to higher-order \BDF methods in~\cite{AltMU24,AltMU26}.
Such semi-explicit methods require a certain weak coupling condition, which may be relaxed by the implementation of an additional inner iteration~\cite{AltD24,AltD25}. 

The convergence analysis for \BDF-based
semi-explicit schemes presented in~\cite{AltMU26} relies on the construction of a weighted norm
with a symmetric positive definite matrix enabling a telescoping
argument (G-stability), cf.~\cite{NevO81}.
The present article takes a fundamentally different analytical approach.
Instead of G-stability, we adapt the Fourier stability and
perturbation techniques developed by Lubich and
Ostermann in~\cite{LubO95} for \RK methods
of quasi-linear parabolic equations.
The core idea is to encode the \RK discretization through generating functions and an associated operator-valued resolvent, decompose it via Schur decomposition, and obtain stability estimates through Parseval's identity on the unit circle.

Extending the framework of~\cite{LubO95} from a single parabolic
equation to the coupled elliptic--parabolic system with delayed coupling operators is the main analytical challenge tackled in this paper. 
To be more precise, the coupling introduces a Schur complement operator
and the delay approximation modifies the structure
of the operator which needs to be inverted.
Through a spectral decomposition of the Schur complement
and a Rayleigh quotient argument for the coercive diffusion
operator, we reduce the invertibility analysis to a scalar
condition on the eigenvalues of the Schur complement,
bounded by the coupling strength.
The analysis then reveals the same critical coupling bounds
as in the \BDF setting considered in~\cite{AltMU26}. 

For iterative \RK schemes, we combine the contraction analysis of the
iteration with \RK consistency estimates.
Here again, the spectral decomposition of the Schur complement is crucial to
establish the contraction property. 
\smallskip

To summarize, the main contributions of this paper are:
\begin{enumerate}[itemsep=0.2em]
	\item Convergence analysis for \RK-based semi-explicit
	decoupling schemes using Fourier stability techniques,
	establishing convergence of order~$k$ under a weak
	coupling condition and sufficient spatial regularity.
	\item Unified perspective connecting the \BDF
	analysis of~\cite{AltMU26} with the \RK analysis,
	showing that both approaches lead to equivalent stability conditions. 
	\item Convergence analysis for iterative (fixed-stress and undrained-split)
	\RK decoupling schemes, combining contraction analysis with
	\RK consistency estimates.
\end{enumerate}

The remainder of this article is organized as follows.
After this introduction, the abstract model problem is introduced
in \Cref{sec:prelim} with the particular example of poroelasticity.
The delay approximation, the resulting semi-explicit scheme, and its Fourier stability and convergence analysis
are presented in \Cref{sec:semiexplicit}. 
This is followed by the proof of convergence for iterative \RK schemes in \Cref{sec:iterative}. 
Finally, we present a numerical study of the convergence results in \Cref{sec:numerics}.

\subsection*{Notation}
Throughout the article, we write~$a \lesssim b$ to indicate that there exists a
generic constant~$C > 0$, independent of spatial and temporal discretization
parameters, such that~$a \leq C b$.

\section{Problem Setting and Preliminaries}
\label{sec:prelim}
We consider a linear elliptic--parabolic system in an $m$-dimensional bounded Lipschitz domain~$\Omega \subseteq \R^m$, $m\in\{2,3\}$,
over a time interval~$[0,T]$.
Let
\begin{equation*}
	\cV\vcentcolon=[H_{0}^{1}(\Omega)]^m, \qquad
	\cQ\vcentcolon=H_{0}^{1}(\Omega), \qquad
	\cHV\vcentcolon=[L^{2}(\Omega)]^m, \qquad
	\cHQ\vcentcolon=L^{2}(\Omega)
\end{equation*}
and denote by $\cV \hook \cHV\simeq\cHV^{*}\hook\cV^{*}$
and $\cQ \hook \cHQ\simeq\cHQ^{*}\hook\cQ^{*}$
the associated Gelfand triples~\cite[Sect.~23.4]{Zei90}.
Given source terms $f\colon[0,T] \to \cV^*$
and $g\colon[0,T] \to \cQ^*$ of sufficient regularity,
we seek $u\colon [0,T]\rightarrow\cV$ and $p\colon [0,T]\rightarrow\cQ$
such that
\begin{subequations}
	\label{eq:ellpar}
	\begin{align}
		a(u,v) - d(v, p)
		&= \langle f, v \rangle, \label{eq:ellpar:a} \\
		d(\dot u, q) + c(\dot p,q) + b(p,q)
		&= \langle g, q\rangle \label{eq:ellpar:b}
	\end{align}
	for all test functions~$v\in \cV$, $q \in \cQ$,
	and for almost every~$t \in (0,T]$.
	The initial data
	\begin{align}
		u(0) = u^0 \in \cV, \qquad
		p(0) = p^0 \in \cHQ \label{eq:ellpar:c}
	\end{align}
\end{subequations}
are assumed to be \emph{consistent}, i.e., to satisfy the consistency condition
\begin{align*}
	a(u^0,v) - d(v, p^0) = \langle f(0),v\rangle\qquad\text{ for all~$v\in \cV$}.
\end{align*}
Throughout the manuscript, we assume the bilinear forms~$a\colon \cV\times\cV \to \R$,
$b\colon \cQ\times\cQ\to\R$, and $c\colon \cHQ\times\cHQ\to\R$
to be symmetric, continuous, and elliptic.
We write~$c_{\mathfrak{a}}$ and~$C_{\mathfrak{a}}$
for the ellipticity and continuity constants
of~$\mathfrak{a}\in\{a, b, c\}$, respectively.
The norms
\[
	\|\cdot\|_{b} \vcentcolon= b(\cdot,\cdot)^{1/2}
	\qquad\text{and}\qquad
	\|\cdot\|_{c} \vcentcolon= c(\cdot,\cdot)^{1/2},
\]
induced by the bilinear forms~$b$ and~$c$, satisfy
\begin{align*}
	\tfrac{1}{C_b}\,\Vert \cdot \Vert^{2}_{b}
	\le \Vert \cdot \Vert_{\cQ}^{2}
	\le \tfrac{1}{c_b}\,\Vert \cdot \Vert^{2}_{b}
	\qquad\text{and}\qquad
	\tfrac{1}{C_c}\,\Vert \cdot \Vert^{2}_{c}
	\le \Vert \cdot \Vert_{\cHQ}^{2}
	\le \tfrac{1}{c_c}\,\Vert \cdot \Vert^{2}_{c}.
\end{align*}
The dual norms on $\cQ^*$ and $\cV^*$ are written as
$\Vert \cdot \Vert_{\cQ^*}$ and $\Vert \cdot \Vert_{\cV^*}$, respectively.
The coupling form~$d\colon\cV\times\cHQ\to\R$ is bounded, i.e., there exists a constant~$C_d > 0$ such that~$d(u,p) \leq C_d\, \Vert u \Vert_{\cV} \Vert p\Vert_{\cHQ}$ for all~$u \in \cV$, $p\in \cHQ$. Under these assumptions, well-posedness is established in~\cite{Sho00}.
\begin{example}[linear poroelasticity]
\label{ex:biot}
The quasi-static Biot model~\cite{Bio41} of linear poroelasticity
with homogeneous Dirichlet boundary conditions fits into the
framework~\eqref{eq:ellpar}.
Here, the unknowns are the displacement~$u\colon [0,T]\times\Omega\rightarrow\R^{m}$
and the pore pressure~$p\colon [0,T]\times\Omega\rightarrow\R$, satisfying
\begin{subequations}
	\label{eq:pdes}
	\begin{align}
		- \nabla\cdot\sigma(u) + \alpha \nabla p
		&= {\hat f} \qquad\text{in }(0,T]\times\Omega, \label{eq:pdes:a}\\
		\partial_{t} \big(\alpha \nabla\cdot u + \tfrac{1}{M} p\big)
		- \nabla\cdot(\kappa\nabla p)
		&= {\hat g} \qquad\text{in } (0,T]\times\Omega. \label{eq:pdes:b}
	\end{align}
\end{subequations}
Therein, $\sigma(u) = {\mu}\, \big(\nabla u + (\nabla u)^\T \big)
+ {\lambda}\, (\nabla \cdot u) \id$
is the stress tensor with Lam\'{e} coefficients ${\lambda}$ and ${\mu}$,
$\kappa$ denotes the permeability,
$\alpha$ the Biot--Willis coupling coefficient,
and $M$ the Biot modulus.
The identification of the bilinear forms with the
abstract setting~\eqref{eq:ellpar} is standard;
see, e.g.,~\cite{ErnM09}.
\end{example}
We define the \emph{coupling strength}
\begin{equation}
	\label{eqn:coupling:strength}
	\omega\vcentcolon=\frac{C_d^{2}}{c_a c_c},
\end{equation}
which governs the convergence of all decoupling schemes considered
in this article, and plays a central role in the upcoming analysis.

We further introduce the operators
$\cA\colon\cV\rightarrow\cV^*$, $\cB\colon\cQ\rightarrow\cQ^*$,
$\cC\colon\cHQ\rightarrow\cHQ^{*}$,
and $\cD\colon\cV\rightarrow\cHQ^{*}$
associated with $a$, $b$, $c$, and $d$, respectively.
In operator notation, system~\eqref{eq:ellpar} becomes
\begin{subequations}
\label{eq:ellpar:opt}
    \begin{align}
        \cA u - \cD^{*} p
        &= f \qquad \text{in } \cV^{*}, \label{eq:ellpar:opt:a}\\
        \cD{\dot u} + \cC{\dot p} + \cB p
        &= g \hspace{0.79cm} \text{in } \cQ^{*}.\label{eq:ellpar:opt:b}
    \end{align}
\end{subequations}
Introducing the vectors and operator matrices
\begin{equation}
	\label{eq:dae:operators}
	y 
	\vcentcolon= \begin{bmatrix} u \\ p \end{bmatrix}, \qquad
	\abstractE 
	\vcentcolon= \begin{bmatrix}  0 & 0 \\ \cD & \cC \end{bmatrix}, \qquad
	\abstractF 
	\vcentcolon= \begin{bmatrix} -\cA & \phantom{-}\cD^* \\ \phantom{-}0 & -\cB \end{bmatrix}, \qquad
	\abstractG 
	\vcentcolon= \begin{bmatrix} f \\ g \end{bmatrix},
\end{equation}
we can rewrite~\eqref{eq:ellpar:opt} in the form
\begin{equation}
	\label{eq:dae}
	\abstractE\dot{y} = \abstractF y + \abstractG.
\end{equation}
Since $\cA$ is invertible by the ellipticity of~$a$,
the displacement can be eliminated from~\eqref{eq:ellpar:opt:a},
reducing the system to the parabolic equation
\begin{align}
	\label{eq:ppde}
   (\cM + \cC)\dot{p} + \cB p 
   &= r,
\end{align}
where $r \vcentcolon= g - \cD\cA^{-1}\dot{f}$ and
$\cM \vcentcolon= \cD\cA^{-1}\cD^{*}$
is the self-adjoint, non-negative Schur complement operator.

\subsection{Implicit Runge--Kutta methods}
\label{subsec:rk}
We now describe the \RK time discretization of~\eqref{eq:dae}. The delay approximation that yields the semi-explicit scheme is recalled in \Cref{sec:semiexplicit} below.
To construct a numerical approximation of the solution $y$ of~\eqref{eq:dae} on the time interval~$[0,T]$, we rely on $s$-stage implicit RK methods, i.e., for a given invertible matrix $\RKA\in\R^{s\times s}$ and a vector $\RKb\in\R^s$, the RK method is given by the Butcher tableau
\begin{equation*}
	\renewcommand{\arraystretch}{1.2}
	\begin{array}{c|c}
		\RKc & \RKA \\\hline
		& \RKb^\T
	\end{array}\qquad \text{with $\RKc \vcentcolon= \RKA\mathds{1}$},
\end{equation*}
where $\mathds{1} = [1,\ldots,1]^\T \in \R^s$. We thus use the short notation $(\RKA,\RKb)$ to denote a specific RK method. In more detail, we consider \RK methods applied to operator equations of the form~\eqref{eq:dae}.
Let us consider a time grid $t^n = n\tau$ with time step $\tau>0$. Given an approximation $y^{n-1}$ to $y(t^{n-1})$, the RK approximation $y^n$ to the solution of system~\eqref{eq:dae} at time point $t^n$ is computed in two steps (cf.~\cite[Ch.~5]{KunM06}): 
In step~one, approximations~$\dot{Y}^n_\ell$ of the stage derivatives
$\dot{y}(t^n_\ell)$ at the intermediate stage points
$t^n_\ell \vcentcolon= t^{n-1} + \RKc_\ell\tau$, $\ell \in \{1, \ldots, s\}$,
are computed from
\begin{equation}
	\label{eq:rk:stage}
	\abstractE\dot{Y}^n_\ell = \abstractF Y^n_\ell + \abstractG(t^{n}_\ell),
	\qquad \text{where} \quad
	Y^n_\ell = y^{n-1} + \tau \sum_{j=1}^s \RKA_{\ell,j} \dot{Y}^n_j,
\end{equation}
where $\abstractE$ and $\abstractF$ are defined in~\eqref{eq:dae:operators}.
Then, in the second step, we set
\begin{equation}
	\label{eq:rk:update}
	y^n = y^{n-1} + \tau \sum_{\ell=1}^{s} \RKb_\ell \dot{Y}^n_\ell.
\end{equation}
Introducing the compact notation
\begin{align*}
	Y^n \vcentcolon= \begin{bmatrix}
		Y^n_1\\
		\vdots\\
		Y^n_s
	\end{bmatrix} \qquad\text{and}\qquad
	\dot{Y}^n \vcentcolon= \begin{bmatrix}
		\dot{Y}^n_1\\
		\vdots\\
		\dot{Y}^n_s
	\end{bmatrix},
\end{align*}
we see that the \RK stage derivative approximations satisfy the identity
\begin{equation}
	\label{eq:rk:stage:deriv}
	\dot{Y}^n 
	= \tfrac{1}{\tau}\, \RKA^{-1} \big(Y^n - \mathds{1} y^{n-1}\big) \qquad\text{with }\ 
	\mathds{1}y^{n-1} = \begin{bmatrix}y^{n-1}\\\vdots\\y^{n-1}\end{bmatrix}.
\end{equation}
Using the \emph{stability function} defined as
\begin{equation}
	\label{eq:stabfun}
	R(z) = 1 + z\RKb^\T(I_s - z\RKA)^{-1}\mathds{1},
\end{equation}
we can thus use the vector notation to write the update formula~\eqref{eq:rk:update} as
\begin{equation}
	\label{eq:rk:update:stabilityFunction}
	y^n = R(\infty)y^{n-1} + \RKb^\T\RKA^{-1}Y^n,
\end{equation}
where $R(\infty) = 1 - \RKb^\T\RKA^{-1}\mathds{1}$. 
The analysis throughout the paper requires the following stability properties of the \RK method.
\begin{assumption}[\RK method]
\label{ass:rk:stability}
The $s$-stage \RK method $(\RKA, \RKb)$ is A-stable, i.e., A($\theta$)-stable with $\theta \geq \pi/2$ (so $|R(z)| \leq 1$ for $\real z \leq 0$), with $\RKA \in \R^{s\times s}$ invertible and $|R(\infty)| < 1$.
\end{assumption}
\begin{remark}
\label{rem:rk:methods}
A canonical family satisfying \Cref{ass:rk:stability} is given by the \emph{Radau IIA methods}~\cite[Sect.~IV.5]{HaiW96}. With $s = 1, 2, 3$ stages they have classical orders $k = 1, 3, 5$ and stage orders $q = 1, 2, 3$, respectively. They are also L-stable ($R(\infty) = 0$) and stiffly accurate, so that the update~\eqref{eq:rk:update:stabilityFunction} reduces to $y^n = \RKb^\T \RKA^{-1} Y^n$.
The strict bound $|R(\infty)| < 1$ in \Cref{ass:rk:stability} excludes the Gauss--Legendre methods (for which $|R(\infty)| = 1$), which we do not analyze in the present work.
\end{remark}
\begin{assumption}[Resolvent smoothing]
\label{ass:smoothing}
Let $\smex > 0$ denote the resolvent-smoothing exponent associated with the elliptic operator~$\cB$. Its value depends on the operator itself, the boundary conditions, and the spatial dimension. cf.~\cite[Thm.~3.3]{LubO95}.
The solution under consideration is assumed to possess the additional spatial regularity required for the resolvent-smoothing estimates of~\cite[Thm.~3.3]{LubO95} to apply.
\end{assumption}
\begin{example}
	\label{rem:smoothing:exponent}
	For a second-order strongly elliptic operator 
	on a smooth bounded domain with homogeneous Dirichlet boundary conditions and a smooth solution, $\smex = 3/4 - \varepsilon$ for arbitrary $\varepsilon > 0$ in two and three spatial dimensions~\cite[Ex.~(i)]{LubO95}.
\end{example}

Recall that the values produced by the \RK method are stage vectors. For any normed space $(\cX, \Vert\cdot\Vert_\cX)$ and stage vector $V = (V_\ell)_{\ell=1}^s \in \cX^s$, we define the stage-product norm via
\begin{equation}
	\label{eqn:norms}
	\Vert V \Vert_{\cX^s}^2 \vcentcolon= \sum_{\ell=1}^s \Vert V_\ell \Vert_\cX^2
\end{equation}
together with the stage duality pairing
$\langle F, V\rangle_s \vcentcolon= \sum_{\ell=1}^s \langle F_\ell, V_\ell\rangle$
between $F \in (\cX^*)^s$ and $V \in \cX^s$.
This convention covers $\cX = \cQ, \cV$ and their duals $\cQ^*, \cV^*$ used below. 
Based on the norms induced by the bilinear forms $a$ and $c$, we likewise write
\[
	\Vert V \Vert_{c,s}^2 
	\vcentcolon= \sum_{\ell=1}^s \Vert V_\ell \Vert_c^2, \qquad
	\Vert V \Vert_{a,s}^2 
	\vcentcolon= \sum_{\ell=1}^s \Vert V_\ell \Vert_a^2.
\]
%

\section{Semi-explicit Runge--Kutta Decoupling}
\label{sec:semiexplicit}

This section introduces the semi-explicit \RK scheme, based on a delay approximation of the elliptic variable~(\Cref{subsec:delay,subsec:semiexplicit}), and proves its stability and convergence using a generating-function framework adapted from~\cite{LubO95}.
The analysis proceeds in four steps:
\begin{enumerate}[itemsep=0.2em]
	\item transform the scheme using generating functions (\Cref{subsec:genfun}),
	\item eliminate the elliptic variable and derive a closed equation for $P(\zeta)$ (\Cref{subsec:L:zeta}),
	\item analyze the resulting operator $\cL(\zeta)$ via spectral arguments (\Cref{subsec:L:zeta}),
	\item transfer stability to the time domain using Parseval's identity (\Cref{subsec:convergence}).
\end{enumerate}

\subsection{Delay approximation}
\label{subsec:delay}

The decoupling strategy of~\cite{AltMU24,AltMU26} replaces
the pressure~$p$ in the elliptic equation~\eqref{eq:ellpar:opt:a}
by a Lagrange interpolation polynomial of degree~$(k-1)$
based on values at the $k$ preceding time levels, i.e.,
\begin{equation}
	\label{eq:p:approx:lagpol}
	{\widehat p}(t;\tau) = \sum_{\delta=1}^{k}c_{k,\delta}\,{p}(t - \delta\tau),
	\qquad
	c_{k,\delta} = (-1)^{(\delta-1)}\binom{k}{\delta}.
\end{equation} 
The central idea for the forthcoming decoupling time integration scheme is to use the time delay $\tau>0$ in~\eqref{eq:p:approx:lagpol} as the time step for the RK method.
Substituting~\eqref{eq:p:approx:lagpol} into~\eqref{eq:ellpar:opt}
yields the delay system
\begin{subequations}
	\label{eq:ellpar:opt:delay:k}
	\begin{align}
	\cA {\bar u} - \cD^{*} \bigg( \sum_{\delta=1}^{k} c_{k,\delta}\, \bar{p}(t - \delta\tau) \bigg)
	&= f \qquad \text{in } \cV^{*}, \label{eq:ellpar:opt:delay:k:a}\\
	\cD{\dot {\bar u}} + \cC{\dot {\bar p}} + \cB {\bar p}
	&= g \hspace{0.79cm} \text{in } \cQ^{*},\label{eq:ellpar:opt:delay:k:b}
	\end{align}
\end{subequations}
in which the elliptic and parabolic equations are decoupled in the following sense. Given the solution $\bar{p}$ until some time $t>0$, we can solve~\eqref{eq:ellpar:opt:delay:k:a} for $\bar{u}$ on the interval $[t,t+\tau]$ independently of the current pressure. This solution can then be used to compute $\bar{p}$ on the interval $[t,t+\tau]$ from~\eqref{eq:ellpar:opt:delay:k:b}. 
Eliminating~$\bar{u}$ as before gives the inherent delay parabolic equation
\begin{equation}
	\label{eqn:par:opt:delay}
	\cC\dot{\bar{p}} + \cM \bigg( \sum_{\delta=1}^{k} c_{k,\delta}\,
					\dot{\bar{p}}(t - \delta\tau) \bigg)  + \cB \bar{p} = r,
\end{equation}
where $\cM$ is the Schur complement operator introduced in~\eqref{eq:ppde}. 
The approximation error introduced by the delay is controlled
by the following result.

\begin{proposition}[{\cite[Prop.~1]{AltMU26}}]
\label{prop:distance:delay}
Under sufficient smoothness assumptions,
the solutions of~\eqref{eq:ellpar:opt} and~\eqref{eq:ellpar:opt:delay:k}
satisfy for almost every $t \in [0,T]$ the estimate 
\begin{displaymath}
	\Vert {\bar u}(t) - u(t)\Vert_{\cV}^{2} + \Vert {\bar p}(t) - p(t)\Vert_{\cQ}^{2}\,
	\lesssim\, \tau^{2k}.
\end{displaymath}
\end{proposition}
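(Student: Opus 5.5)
We estimate the error $e_p \vcentcolon= \bar p - p$, $e_u \vcentcolon= \bar u - u$ directly from the two systems. Subtracting \eqref{eq:ellpar:opt} from \eqref{eq:ellpar:opt:delay:k} gives
\begin{align*}
	\cA e_u - \cD^{*}\Big(\sum_{\delta=1}^{k} c_{k,\delta}\, e_p(t-\delta\tau)\Big) &= \cD^{*}\rho(t), \\
	\cD \dot e_u + \cC \dot e_p + \cB e_p &= 0,
\end{align*}
with the defect $\rho(t) \vcentcolon= \sum_{\delta=1}^{k} c_{k,\delta}\, p(t-\delta\tau) - p(t)$.

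\textbf{Step 1: size of the defect.} The coefficients $c_{k,\delta}$ are exactly those of the extrapolation identity $\sum_{\delta=0}^{k}(-1)^\delta\binom{k}{\delta}p(t-\delta\tau) = \nabla_\tau^k p(t)$. Hence $\rho(t) = -\nabla_\tau^k p(t)$, the $k$th backward difference. By the Peano kernel representation,
\[
	\Vert\rho(t)\Vert_{\cHQ} \lesssim \tau^k \sup_{s\in[t-k\tau,t]}\Vert p^{(k)}(s)\Vert_{\cHQ},
\]
and the same bound holds for $\dot\rho$ with $p^{(k+1)}$. This is where the smoothness assumption enters. On the initial segment $t<k\tau$ we need a history for $p$. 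We take the natural choice $\bar p = p$ on $[-k\tau,0]$ using a smooth extension of $p$, so that $e_p$ and $e_u$ vanish initially, apart from consistency matters.

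\textbf{Step 2: energy estimate.} Eliminating $e_u = \cA^{-1}\cD^*(\widehat{e_p} + \rho)$, where $\widehat{e_p}$ is the delayed combination, yields
\[
	\cC\dot e_p + \cM\,\widehat{\dot e_p} + \cB e_p = -\cM\dot\rho .
\]
Testing with $\dot e_p$ gives
\[
	\Vert\dot e_p\Vert_c^2 + \tfrac12\tfrac{d}{dt}\Vert e_p\Vert_b^2 \le |\langle\cM\widehat{\dot e_p},\dot e_p\rangle| + |\langle \cM\dot\rho,\dot e_p\rangle|.
\]
We use $\langle\cM q,q\rangle \le \omega\Vert q\Vert_c^2$, which follows from the definition \eqref{eqn:coupling:strength} of $\omega$. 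With Young's inequality, the delayed term is bounded by $\tfrac12\omega\sum_\delta|c_{k,\delta}|\big(\Vert\dot e_p(t-\delta\tau)\Vert_c^2 + \Vert\dot e_p(t)\Vert_c^2\big)$. We then integrate over $[0,t]$ and shift the delayed integrals; they are bounded by the integral up to $t$ because of the zero history. Under the weak coupling condition $\omega\sum_\delta|c_{k,\delta}| = \omega(2^k-1) < 1$, the delayed contributions are absorbed into the left-hand side. This leaves
\[
	\Vert e_p(t)\Vert_b^2 \lesssim \int_0^t\Vert\dot\rho\Vert_{\cHQ}^2 \lesssim \tau^{2k}.
\]

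\textbf{Step 3: the displacement.} The bound for $\Vert e_u\Vert_{\cV}$ then follows from the elliptic equation:
\[
	\Vert e_u\Vert_\cV \lesssim C_d\Big(\sum_\delta|c_{k,\delta}|\,\Vert e_p(t-\delta\tau)\Vert_{\cHQ} + \Vert\rho(t)\Vert_{\cHQ}\Big) \lesssim \tau^k.
\]
Together with $\Vert\cdot\Vert_\cQ \lesssim \Vert\cdot\Vert_b$, this gives the claim.

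\textbf{Main obstacle.} The absorption of the delay terms is the delicate step, since it produces the coupling restriction. If the crude triangle-inequality bound $2^k-1$ is deemed too restrictive, I would instead use a Fourier/$G$-stability style argument for the multiplier $1-\sum_\delta c_{k,\delta}\zeta^\delta = (1-\zeta)^k$. This would give the sharper condition from~\cite{AltMU26}. A second technical point is a careful treatment of the start-up phase $t<k\tau$.
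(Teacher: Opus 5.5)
The paper gives no proof of this proposition; it is imported from \cite[Prop.~1]{AltMU26}. Your time-domain energy argument is a correct, self-contained route.

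With $\rho=-\nabla_\tau^k p$, the pressure error satisfies the neutral delay equation $\cC\dot e_p+\cM\widehat{\dot e_p}+\cB e_p=-\cM\dot\rho$. Each step then holds as follows:
\begin{itemize}
\item The cross terms obey $|\langle\cM x,y\rangle|\le\omega\Vert x\Vert_c\Vert y\Vert_c$, by Cauchy--Schwarz for the non-negative symmetric form of $\cM$.
\item The shifted integrals are controlled because the history is zero.
\item Absorption requires exactly $\omega\sum_\delta|c_{k,\delta}|=\omega(2^k-1)<1$.
\item Testing with $\dot e_p$ is legitimate by the method of steps and $L^2$-maximal regularity of $\cC\dot x+\cB x=h$, since $\cM\colon\cHQ\to\cHQ^*$ is bounded.
\end{itemize}
A real advantage of your route is that all smoothness is placed on $p$ through $\rho$, never on $\bar p$. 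As the solution of a neutral delay equation, $\bar p$ generally has derivative jumps at $t=m\tau$.

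Your added hypothesis $\omega<1/(2^k-1)$ is not a flaw; the statement needs it implicitly. Take the scalar mode $\dot x+\mu\sum_\delta c_{k,\delta}\dot x(\cdot-\delta\tau)+\nu x=0$ with $\mu>1/(2^k-1)$. Then $1+\mu\delayOperK(\eul^{-\lambda\tau})$ vanishes at $\eul^{-\lambda\tau}=\zeta_0:=1-((1+\mu)/\mu)^{1/k}\in(-1,0)$. This produces infinitely many characteristic roots with $\real\lambda\approx\tau^{-1}\log(1/|\zeta_0|)>0$, so no $\tau$-uniform bound can hold.

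For the same reason, your \emph{main obstacle} is unfounded. The triangle-inequality constant already equals the threshold of Proposition~\ref{prop:coupling:verification}, because $\sum_\delta|c_{k,\delta}|=2^k-1=|\delayOperK(-1)|$: at $\zeta=-1$ all terms $c_{k,\delta}\zeta^\delta$ have the same sign. No Fourier or $G$-stability argument can therefore weaken the condition on the Rayleigh quotient of $\cM$.

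One small correction: with the history $\bar p=p$ on $[-k\tau,0]$, only $e_p$ vanishes initially. In general $e_u(0)=\cA^{-1}\cD^*\rho(0)=\cO(\tau^k)\neq0$, so $\bar u(0)\neq u^0$. This is harmless, since Step~3 recovers $e_u$ algebraically.
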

%
\subsection{Semi-explicit schemes}
\label{subsec:semiexplicit}
To construct a decoupling time-integration scheme for~\eqref{eq:ellpar}, respectively the operator formulation~\eqref{eq:ellpar:opt}, we apply an implicit $s$-stage \RK method $(\RKA,\RKb)$ to the time delay approximation~\eqref{eq:ellpar:opt:delay:k}. To this end, we interpret~\eqref{eq:ellpar:opt:delay:k} as a system for $y = (u,p)$, where the elliptic equation~\eqref{eq:ellpar:opt:delay:k:a} acts as constraint, while~\eqref{eq:ellpar:opt:delay:k:b} contains the time derivative. 
At time step $n$, we denote the stage values by
\begin{align*}
	U^n = [U_1^n,\ldots,U_s^n]^\T \in \cV^s,
	\qquad
	P^n = [P_1^n,\ldots,P_s^n]^\T \in \cQ^s
\end{align*}
with corresponding stage derivatives $\dot{U}^n$, $\dot{P}^n$. 
Applying the \RK method to \eqref{eq:ellpar:opt:delay:k} yields a decoupled scheme at time step $t^n$. In vector form, this reads
\begin{subequations}
\label{eq:vec:system}
\begin{align}
	(I_s \otimes \cA) {U}^{n} - (I_s\otimes \cD^{*}) \sum_{\delta=1}^{k}c_{k,\delta}{P}^{n-\delta} 
	&= {F}^{n}, \label{eq:vec:system:a}\\
	(I_s \otimes \cD) {\dot U}^{n} + (I_s \otimes \cC) {\dot P}^{n} + (I_s \otimes \cB) {P}^{n} 
	&= {G}^{n}. \label{eq:vec:system:b}
\end{align}
Here, $\otimes$ denotes the Kronecker product combining the $s$-dimensional
stage structure with the function-space operators. 
This means, in particular, that $(I_s \otimes \cA)$ acts as $\cA$ on each stage independently. 
Moreover, the right-hand sides consists of $F_\ell^n = f(t^{n-1} + \RKc_\ell\tau)$ and $G_\ell^n = g(t^{n-1} + \RKc_\ell\tau)$. 
Following~\eqref{eq:rk:stage:deriv}, the stage derivatives are related to the stage values by
\begin{equation}
	\label{eq:rk:stage:deriv:reuse}
	\dot{U}^n 
	= \tfrac{1}{\tau}\, \RKA^{-1}\big(U^n - \mathds{1} u^{n-1}\big),
	\qquad
	\dot{P}^n 
	= \tfrac{1}{\tau}\, \RKA^{-1}\big(P^n - \mathds{1} p^{n-1}\big).
\end{equation}
\end{subequations}
The key observation is that the delay approximation eliminates any dependence of~\eqref{eq:vec:system:a} on the current stage values $P^n$. Hence, the stage values $U^n$ can be computed solely from previous time steps, and system~\eqref{eq:vec:system} becomes semi-explicit: first solve~\eqref{eq:vec:system:a} for $U^n$, then compute $P^n$ from~\eqref{eq:vec:system:b} and~\eqref{eq:rk:stage:deriv:reuse}.
Following~\eqref{eq:rk:update:stabilityFunction}, the update formulae are
\begin{align}
	\label{eq:rk:trans}
	u^{n} = R(\infty) u^{n-1} + \RKb^\T \RKA^{-1} {U}^{n} \qquad\text{and}\qquad
	p^{n} = R(\infty) p^{n-1} + \RKb^\T \RKA^{-1} {P}^{n}.	
\end{align}

\begin{remark}[Commutativity of the Schur complement construction and the \RK discretization]
\label{rem:commute}
The semi-explicit scheme~\eqref{eq:vec:system} is independent of the order in which the \RK discretization and elimination of $u$ variables to construct the Schur complement are performed.
\end{remark}

\subsection{Generating functions and the \texorpdfstring{$\Delta(\zeta)$}{Delta(zeta)} operator}
\label{subsec:genfun}

Following the (formal) generating power series framework for \RK methods introduced in \cite{Lub88a,Lub88b}, we define
\begin{gather*}
	u(\zeta) \vcentcolon= \sum_{n=1}^{\infty} u^{n} \zeta^{n}, \qquad p(\zeta) \vcentcolon= \sum_{n=1}^{\infty} p^{n} \zeta^{n}
\end{gather*}
and, analogously, $U$, $P$, $F$, and $G$.
The stage-product norm and pairing conventions from~\Cref{subsec:rk} extend to these generating-function-valued quantities termwise in $\zeta$.
Note that, compared to~\cite{LubO95}, the summation starts at $n = 1$ (rather than $n = 0$) so that the initial data $u^0, p^0$ do not appear in the generating functions and are treated separately.
The sequences $u^n, p^n$, etc., are defined by the scheme for $n = 1,\ldots, N$ with $N = T/\tau$.
For the generating function analysis, we extend the scheme to all $n > N$ by setting the data to zero, i.e., $F^n = G^n = 0$ for $n > N$.
This uniquely determines $P^n, U^n$, etc., for all $n \geq 1$ and ensures that the algebraic manipulations below hold as identities of formal power series.
The \RK update formula~\eqref{eq:rk:trans} thus yields
\begin{align}
	\label{eq:tran:2}
	u(\zeta) 
	= \frac{R(\infty)\zeta}{1 - R(\infty)\zeta}\, u^{0} + \frac{\RKb^\T\RKA^{-1}}{1 - R(\infty)\zeta}\, U(\zeta)
\end{align}
and analogously for $p(\zeta)$.
Following~\cite{LubO95}, we define the $\Delta$-operator
\begin{equation}
	\label{eq:t:rk:delta}
	\Delta(\zeta) \vcentcolon= \Big(\RKA + \frac{\zeta}{1 - \zeta}\mathds{1}\RKb^\T\Big)^{-1}
\end{equation}
which encodes the \RK structure in a single matrix-valued function of $\zeta$. As indicated in~\cite[Eq.~(2.19)]{LubO95}, it satisfies the identities 
\begin{align}
	\label{eq:delta:identities}
	\Delta(\zeta) 
	= \RKA^{-1} - \frac{\zeta\RKA^{-1}\mathds{1}\RKb^\T\RKA^{-1}}{1 - R(\infty)\zeta}, \qquad
	\frac{\Delta(\zeta)\mathds{1}}{1-\zeta} 
	= \frac{\RKA^{-1}\mathds{1}}{1 - R(\infty)\zeta}.
\end{align}

\begin{lemma}[Spectral property of $\Delta(\zeta)$]
	\label{lem:DeltaSpec}
	Assume that the \RK method $(\RKA,\RKb)$ is $A(\theta)$-stable with $\theta > 0$. Then, for $|\zeta|\le 1$, all eigenvalues $\lambda$ of $\Delta(\zeta)$ satisfy
	\begin{equation}
		\label{eq:DeltaSpec}
			|\arg \lambda| \le \pi - \theta.
		\end{equation}
	In particular, for A-stable methods, i.e., $\theta \ge \pi/2$, all eigenvalues satisfy $\real(\lambda) \ge 0$.
\end{lemma}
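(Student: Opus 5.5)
The plan is to relate the eigenvalues of $\Delta(\zeta)$ directly to the stability function~\eqref{eq:stabfun}. Concretely, I expect an eigenvalue $\lambda$ of $\Delta(\zeta)$ to be exactly a point $z=\lambda$ with $R(z)=1/\zeta$. Once that is established, $A(\theta)$-stability, i.e.\ $|R(z)|\le 1$ on the sector $S_\theta=\{z: |\arg(-z)|\le\theta\}$, forces $z$ away from the interior of $S_\theta$.

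\emph{Step 1 (reduction to a scalar equation).} Fix $\zeta$ with $0<|\zeta|\le 1$, $\zeta\neq 1$. Since $\Delta(\zeta)$ is invertible by definition, every eigenvalue satisfies $\lambda\neq 0$. Moreover, $\lambda$ is an eigenvalue of $\Delta(\zeta)$ if and only if $\mu=1/\lambda$ is an eigenvalue of
\[
	\Delta(\zeta)^{-1}=\RKA+c\,\mathds{1}\RKb^\T, \qquad c=\tfrac{\zeta}{1-\zeta}.
\]
Assume first that $\mu$ is not an eigenvalue of $\RKA$. The matrix determinant lemma gives
\[
	\det\bigl(\mu I_s-\RKA-c\mathds{1}\RKb^\T\bigr)=\det(\mu I_s-\RKA)\bigl(1-c\,\RKb^\T(\mu I_s-\RKA)^{-1}\mathds{1}\bigr).
\]
Put $z=1/\mu=\lambda$. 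Then $(\mu I_s-\RKA)^{-1}=z(I_s-z\RKA)^{-1}$, so the second factor equals $1-c\,(R(z)-1)$ by~\eqref{eq:stabfun}. Hence $\mu$ is an eigenvalue exactly when $R(z)=1+1/c=1/\zeta$. In particular $|R(\lambda)|=1/|\zeta|\ge 1$.

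\emph{Step 2 (using stability).} $R$ is a nonconstant rational function that is bounded by $1$ on $S_\theta$. By the maximum modulus principle, $|R|<1$ in the interior of $S_\theta$. Therefore $\lambda\notin\operatorname{int}S_\theta$, that is, $|\arg(-\lambda)|\ge\theta$, which is equivalent to $|\arg\lambda|\le\pi-\theta$. For $\theta\ge\pi/2$ this gives $\real\lambda\ge0$.

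\emph{Step 3 (exceptional cases; the main technical obstacle).} Three cases are not covered by Step~1:
\begin{itemize}
	\item $\zeta=0$, where $\Delta(0)=\RKA^{-1}$;
	\item $\zeta=1$, where $\Delta$ is defined through the first identity in~\eqref{eq:delta:identities}, using $|R(\infty)|<1$;
	\item parameters $\zeta$ for which $1/\lambda$ happens to be an eigenvalue of $\RKA$.
\end{itemize}
I would handle all of them by continuity. By~\eqref{eq:delta:identities}, $\Delta(\zeta)$ is a continuous (indeed rational) matrix function on $|\zeta|\le1$, since $|R(\infty)|<1$ keeps the denominator away from zero. Its eigenvalues therefore depend continuously on $\zeta$.

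For the third case, the set of $\zeta$ where $\det(\lambda^{-1}I_s-\RKA)=0$ holds for some eigenvalue is either finite or a whole neighbourhood. In the latter case an eigenvalue of $\RKA^{-1}$ would persist, and it would have to be treated via poles of $R$; I would dispose of this by a small perturbation argument. Outside this exceptional set the bound of Step~2 holds. The target set $\{|\arg\lambda|\le\pi-\theta\}\cup\{0\}$ is closed, and $\lambda=0$ is excluded by invertibility. Limits of eigenvalues are eigenvalues, so the bound extends to all $|\zeta|\le1$.

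I expect the delicate point to be making this genericity and continuity step rigorous in the presence of possible pole cancellations in $R$.
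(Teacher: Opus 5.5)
\textbf{There is a genuine gap in Step~3.} Your Steps~1--2 are correct. They are exactly the second half of the paper's argument. The paper quotes the Lubich--Ostermann dichotomy: every eigenvalue of $\Delta(\zeta)$ is either an eigenvalue of $\RKA^{-1}$ or a solution of $R(\lambda)=1/\zeta$. It then places both classes in the sector.

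The gap concerns the first class, which you try to absorb by continuity plus a ``small perturbation argument''. That cannot work. Suppose the numerator and denominator of $R$ share a zero. Then an eigenvalue of $\RKA^{-1}$ lies in the spectrum of $\Delta(\zeta)$ for every $\zeta$. It is not a limit of solutions of $R(\lambda)=1/\zeta$, and $|R|\le 1$ on the sector says nothing about it. In that case the conclusion is actually false. Take $\RKA=\diag(1,-1)$ and $\RKb=(1,0)^\T$. Then $R(z)=1/(1-z)$, so the method is A-stable, $\RKA$ is invertible and $R(\infty)=0$. Yet $\RKA+\tfrac{\zeta}{1-\zeta}\mathds{1}\RKb^\T$ is lower triangular with diagonal entries $\tfrac{1}{1-\zeta}$ and $-1$. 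Hence $-1$ is an eigenvalue of $\Delta(\zeta)$ for all $\zeta$. Perturbing $\RKA$ to move this eigenvalue turns it into a pole of $R$ near $-1$, so the perturbed method leaves the A-stable class.

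\textbf{What is missing is a hypothesis on the spectrum of $\RKA$.} The paper supplies it in the phrase ``by the assumed A($\theta$)-stability'', used in the Lubich--Ostermann sense. There, A($\theta$)-stability means $|R(z)|\le 1$ \emph{and} $I_s-z\RKA$ nonsingular for $|\arg(-z)|\le\theta$. The second condition says precisely that every eigenvalue of $\RKA^{-1}$ lies in $\{|\arg\lambda|<\pi-\theta\}$. Alternatively, you could assume that $R$ has no pole--zero cancellation, as holds for Radau~IIA. Then every eigenvalue of $\RKA^{-1}$ is a pole of $R$ and cannot lie in the closed sector where $|R|\le 1$.

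With either hypothesis, no genericity argument is needed. Set $Q(z)=\det(I_s-z\RKA)$ and $P=QR$, which is a polynomial. The adjugate form of the matrix determinant lemma gives, for $\zeta\neq 1$,
\[
\det\bigl(I_s-\lambda\RKA-\lambda\tfrac{\zeta}{1-\zeta}\mathds{1}\RKb^\T\bigr)=\frac{Q(\lambda)-\zeta P(\lambda)}{1-\zeta}.
\]
So an eigenvalue satisfies either $Q(\lambda)=0$, which is covered by the hypothesis, or $R(\lambda)=1/\zeta$, which is your Step~2. Only $\zeta=1$ needs a limit argument, using that each eigenvalue of $\Delta(1)$ is a limit of eigenvalues of $\Delta(\zeta_n)$. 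Note also that $\Delta(1)\mathds{1}=0$ by the second identity in~\eqref{eq:delta:identities}. So $\lambda=0$ is not ``excluded by invertibility'' there. This is harmless, since $0$ lies in your closed target set.
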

\begin{proof}
By~\cite[Sect.~2, Eq.~(2.11)]{LubO95}, the eigenvalues of~$\Delta(\zeta)$ are either eigenvalues of~$\RKA^{-1}$ or satisfy $R(\lambda) = 1/\zeta$. By the assumed A($\theta$)-stability, both classes lie in the sector $|\arg \lambda| \leq \pi - \theta$, which gives~\eqref{eq:DeltaSpec}. For $\theta \ge \pi/2$, the sector $|\arg \lambda| \le \pi/2$ is contained in the closed right half-plane.
\end{proof}

\subsection{The operator \texorpdfstring{$\cL(\zeta)$}{L(z)} and its structure}
\label{subsec:L:zeta}

We derive the transformed system by passing to the generating-function representation
of the scheme~\eqref{eq:vec:system} and eliminating $U(\zeta)$.

\begin{lemma}
	\label{lem:generatingFunctionP}
	The generating functions for the semi-explicit \RK scheme~\eqref{eq:vec:system} satisfy
	\begin{equation}
		\label{eq:L:zeta}
		\cL(\zeta) P(\zeta) = \cR(\zeta),
	\end{equation}
	where the operator $\cL(\zeta)\colon \cQ^s \to (\cQ^*)^s$ is defined by
	\begin{equation}
		\label{eq:L:zeta:def}
		\cL(\zeta) \vcentcolon= (I_s\otimes \cB) + \frac{\Delta(\zeta)}{\tau} \otimes \Big(\cC + \delayOperK(\zeta)\, \cM\Big)
	\end{equation}
	with $\delayOperK(\zeta) = \sum_{\delta=1}^{k}c_{k,\delta}\, \zeta^{\delta}$ and the right-hand side $\cR(\zeta) \in (\cQ^*)^s$ is given by
	\begin{align}
		\label{eq:R:zeta:def}
		\cR(\zeta) 
		&\vcentcolon= G(\zeta) - \frac{1}{\tau}\Big(\Delta(\zeta)\otimes \cD\cA^{-1}\Big)F(\zeta) \nonumber\\
		&\qquad - \frac{1}{\tau}\Big(\Delta(\zeta)\otimes \cM\Big) \sum_{\delta=1}^{k}c_{k,\delta}\sum_{n=1}^{\delta} P^{n-\delta}\zeta^{n} + \frac{1}{\tau}\frac{\RKA^{-1}\mathds{1}\zeta}{1 - R(\infty)\zeta}\otimes \big(\cD u^{0} + \cC p^{0}\big).
	\end{align}
\end{lemma}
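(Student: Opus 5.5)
The plan is to transform the scheme termwise: multiply each relation of \eqref{eq:vec:system} at step $n$ by $\zeta^n$, sum over $n\ge 1$, and then eliminate $U(\zeta)$.

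\textbf{Step 1 (stage derivatives).} Using \eqref{eq:rk:stage:deriv:reuse}, the generating function of the stage derivatives is
\[
\dot P(\zeta) = \tfrac{1}{\tau}(\RKA^{-1}\otimes I)\bigl(P(\zeta) - \mathds{1}\,\zeta\,(p^0 + p(\zeta))\bigr).
\]
We insert the relation \eqref{eq:tran:2} for $p(\zeta)$, which gives
\[
\zeta\bigl(p^0+p(\zeta)\bigr) = \frac{\zeta}{1-R(\infty)\zeta}\,p^0 + \frac{\zeta\,\RKb^\T\RKA^{-1}}{1-R(\infty)\zeta}\,P(\zeta).
\]
Combining this with the first identity in \eqref{eq:delta:identities} yields
\[
\dot P(\zeta) = \tfrac1\tau\Delta(\zeta)P(\zeta) - \tfrac1\tau\,\frac{\RKA^{-1}\mathds{1}\zeta}{1-R(\infty)\zeta}\,p^0 .
\]
The same identity holds for $\dot U(\zeta)$ with $u^0$ in place of $p^0$. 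This step is the same as the derivation in \cite{LubO95}, with care taken that the sums start at $n=1$.

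\textbf{Step 2 (elliptic equation).} Transforming \eqref{eq:vec:system:a} requires the shifted sums
\[
\sum_{n\ge1} P^{n-\delta}\zeta^n = \zeta^\delta P(\zeta) + \sum_{n=1}^{\delta}P^{n-\delta}\zeta^n .
\]
The second sum collects the stage values with nonpositive index, i.e.\ the starting values. This gives
\[
U(\zeta) = (I_s\otimes\cA^{-1})\Bigl[F(\zeta) + (I_s\otimes\cD^*)\Bigl(\delayOperK(\zeta)P(\zeta) + \sum_{\delta}c_{k,\delta}\sum_{n=1}^{\delta}P^{n-\delta}\zeta^n\Bigr)\Bigr].
\]

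\textbf{Step 3 (parabolic equation and elimination).} Transforming \eqref{eq:vec:system:b} gives $(I\otimes\cD)\dot U(\zeta) + (I\otimes\cC)\dot P(\zeta) + (I\otimes\cB)P(\zeta) = G(\zeta)$. We substitute Step 1 for both stage derivatives and then Step 2 for $U(\zeta)$. The mixed-product rule $(\Delta\otimes I)(I\otimes \cD\cA^{-1}\cD^*) = \Delta\otimes\cM$ makes $\cM$ appear. Collecting the terms linear in $P(\zeta)$ gives exactly $\cL(\zeta)$; here the scalar $\delayOperK(\zeta)$ commutes into the Kronecker factor. All remaining terms are moved to the right-hand side, which produces $\cR(\zeta)$ with the sign pattern of \eqref{eq:R:zeta:def}: the $F$ term, the starting-value term, and the initial-value term $+\frac1\tau\frac{\RKA^{-1}\mathds{1}\zeta}{1-R(\infty)\zeta}\otimes(\cD u^0+\cC p^0)$.

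The main obstacle is purely bookkeeping. We must verify that the shift identity with the summation starting at $n=1$ produces precisely the starting-value and initial-value corrections, and that these correction terms combine through \eqref{eq:delta:identities}. All manipulations are identities of formal power series, valid because the scheme is extended to all $n$ with zero data.
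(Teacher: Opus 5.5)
Your proposal is correct and follows the paper's route: rewrite the transformed stage derivatives via \eqref{eq:tran:2} and the first identity in \eqref{eq:delta:identities} as $\tfrac{1}{\tau}\Delta(\zeta)$ applied to the stage values plus the $u^0$, $p^0$ correction, split the shifted sums in the elliptic equation into $\Psi_k(\zeta)P(\zeta)$ plus starting values, and eliminate $U(\zeta)$. The only cosmetic difference is that you transform $\dot U$ and $\dot P$ separately before inserting them, whereas the paper substitutes directly into the transformed parabolic equation.
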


\begin{proof}
	Using~\eqref{eq:vec:system:b} and the stage derivative identity~\eqref{eq:rk:stage:deriv:reuse} yields
	\begin{align}
		\label{eq:z:transform:parabolic}
		\begin{aligned}
		G(\zeta)
		&= \frac{1}{\tau}\left( \big(I_s\otimes \cD\big) \RKA^{-1}\right)\left(U(\zeta) - \zeta \mathds{1}\big(u(\zeta) + u^0\big)\right)\\
		&\qquad + \frac{1}{\tau}\left( \big(I_s\otimes \cC\big) \RKA^{-1}\right)\left(P(\zeta) - \zeta \mathds{1}\big(p(\zeta) + p^0\big)\right) + \big(I_s \otimes \cB\big) P(\zeta).
		\end{aligned}
	\end{align}
	Substituting the update formula~\eqref{eq:tran:2} for $u(\zeta)$ together with $\Delta(\zeta)$ with the representation given in~\eqref{eq:delta:identities}, we get
	\begin{align*}
		\RKA^{-1}\left(U(\zeta) - \zeta \mathds{1}\big(u(\zeta) + u^0\big)\right)
		&= \Delta(\zeta)U(\zeta) - \frac{\RKA^{-1} \mathds{1}\zeta}{1-R(\infty)\zeta}\, u^0
	\end{align*}
	and, similarly, for terms related to $P(\zeta)$ and $p(\zeta)$.
	Substituting these expressions into~\eqref{eq:z:transform:parabolic} yields
	\begin{align}
		\label{eq:z:transform:parabolic:b}
		\begin{aligned}
		G(\zeta)
		&= \frac{1}{\tau}\big(\Delta(\zeta)\otimes \cD\big) U(\zeta)
		+ \frac{1}{\tau}\big(\Delta(\zeta)\otimes \cC\big) P(\zeta)
		+ \big(I_s \otimes \cB\big) P(\zeta) \\
		&\qquad - \frac{1}{\tau}\frac{\RKA^{-1}\mathds{1}\zeta}{1-R(\infty)\zeta}\otimes\big(\cD u^0 + \cC p^0\big),
		\end{aligned}
	\end{align}
	where we have used $(I_s\otimes \cD)(\Delta(\zeta)\otimes \mathrm{Id}) = \Delta(\zeta)\otimes \cD$.
	Next, we eliminate $U(\zeta)$ by observing that~\eqref{eq:vec:system:a} yields
	\begin{equation*}
		U(\zeta) = (I_s \otimes \cA^{-1})F(\zeta) + (I_s \otimes \cA^{-1}\cD^{*}) \sum_{\delta=1}^{k}c_{k,\delta}\Big( P(\zeta)\zeta^{\delta} + \sum_{n=1}^{\delta} P^{n-\delta} \zeta^{n}\Big).
	\end{equation*}
	Inserting this into~\eqref{eq:z:transform:parabolic:b} and using $\cM = \cD\cA^{-1}\cD^{*}$, we collect all terms involving $P(\zeta)$ on the left-hand side, leading to
	\begin{align*}
		\bigg((I_s\otimes \cB)
		&+ \frac{1}{\tau}\big(\Delta(\zeta)\otimes \cC\big) + \frac{1}{\tau}\big(\Delta(\zeta)\otimes \cM\big)\sum_{\delta=1}^k c_{k,\delta} \zeta^{\delta}\bigg) P(\zeta) \\
		&= G(\zeta) - \frac{1}{\tau}\big(\Delta(\zeta)\otimes \cD\cA^{-1}\big)F(\zeta)
		 - \frac{1}{\tau}\big(\Delta(\zeta)\otimes \cM\big) \sum_{\delta=1}^{k}c_{k,\delta}\sum_{n=1}^{\delta} P^{n-\delta} \zeta^{n} \\
		&\qquad + \frac{1}{\tau}\frac{\RKA^{-1}\mathds{1}\zeta}{1-R(\infty)\zeta}\otimes\big(\cD u^0 + \cC p^0\big),
	\end{align*}
	which completes the proof.
\end{proof}
Compared to the classical case~\cite{LubO93,LubO95}, the operator $\cL(\zeta)$ contains the additional perturbation $\delayOperK(\zeta)\cM$ originating from the delay approximation. Controlling this term is the key difficulty in the forthcoming analysis.

\begin{remark}[Convergence of the generating functions]
	\label{rem:gf:convergence}
	The identity~\eqref{eq:L:zeta} holds as a formal power series by construction (cf.\ \Cref{subsec:genfun}).
	Whether the series converges on the unit circle depends on the particular \RK method. 
	For L-stable methods ($R(\infty) = 0$), identity~\eqref{eq:delta:identities} gives $\Delta(\zeta) = \RKA^{-1} - \zeta\,\RKA^{-1}\mathds{1}\RKb^\T\RKA^{-1}$, i.e., a polynomial of degree one. Hence, $\cL(\zeta)$ and $\cR(\zeta)$ are matrix polynomials in~$\zeta$ and convergence is immediate.
	For $|R(\infty)| < 1$, the factor $\zeta/(1-R(\infty)\zeta)$ in~\eqref{eq:R:zeta:def} has its pole at $|\zeta| = 1/|R(\infty)| > 1$, so $\cR(\zeta)$ is analytic on the closed unit disc and convergence follows from the uniform bound on $\cL(\zeta)^{-1}$ established in \Cref{lem:L:invertible} below.
\end{remark}

To show the invertibility of $\cL(\zeta)$, we need the following observation regarding the coupling strength $\omega$ defined in~\eqref{eqn:coupling:strength}.

\begin{proposition}[Coupling bounds]
	\label{prop:coupling:verification}
	Consider the delay operator $\delayOperK$ from \Cref{lem:generatingFunctionP}.
	Then $\real\bigl(1 + \mu \delayOperK(\zeta)\bigr) > 0$ for all $|\zeta| \le 1$, $\mu \in [0,\omega]$ if and only if the strict weak coupling condition
	\begin{align}
		\label{eqn:weakCoupling}
		\omega < \frac{1}{2^k - 1}
	\end{align}
	holds.
	Moreover, $\real\bigl(1 + \mu \delayOperK(\zeta)\bigr) = 0$ if and only if $\omega = \mu = 1/(2^k-1)$ and $\zeta = -1$.
\end{proposition}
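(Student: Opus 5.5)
The whole statement should reduce to an explicit formula for the delay operator. By the binomial theorem, the coefficients $c_{k,\delta} = (-1)^{\delta-1}\binom{k}{\delta}$ from~\eqref{eq:p:approx:lagpol} give
\begin{equation*}
	\delayOperK(\zeta) = \sum_{\delta=1}^{k} (-1)^{\delta-1}\binom{k}{\delta}\zeta^\delta = 1 - (1-\zeta)^k .
\end{equation*}
Hence $1 + \mu\delayOperK(\zeta) = 1 + \mu - \mu\, w^k$ with $w \vcentcolon= 1-\zeta$, and so
\begin{equation*}
	\real\bigl(1 + \mu\delayOperK(\zeta)\bigr) = 1 + \mu - \mu \real(w^k).
\end{equation*}
The problem is therefore to bound $\real(w^k)$ when $w$ ranges over the closed disc of radius $1$ centred at $1$. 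This is the image of $\{|\zeta|\le 1\}$ under $\zeta\mapsto 1-\zeta$.

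On this disc we have $|w| \le 2$, so $\real(w^k) \le |w|^k \le 2^k$. Equality in both inequalities forces $|w| = 2$, which on this disc happens only at $w = 2$, i.e.\ $\zeta = -1$. There $w^k = 2^k$ is real, so the bound is attained. For $\mu \ge 0$ this gives
\begin{equation*}
	\min_{|\zeta|\le 1} \real\bigl(1+\mu\delayOperK(\zeta)\bigr) = 1 - \mu(2^k-1),
\end{equation*}
and for $\mu > 0$ the minimum is attained only at $\zeta = -1$. For $\mu = 0$ the expression is identically $1$.

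The equivalence then follows directly. If $\omega < 1/(2^k-1)$, then every $\mu \in [0,\omega]$ satisfies $1 - \mu(2^k-1) > 0$, so the real part is positive for all $|\zeta|\le 1$. Conversely, taking $\mu = \omega$ and $\zeta = -1$ gives the value $1 - \omega(2^k-1)$, and positivity of this value forces~\eqref{eqn:weakCoupling}.

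For the characterisation of the zero, I would work in the regime $\omega \le 1/(2^k-1)$, which I take to be the setting the ``moreover'' part intends; for larger $\omega$ there are further zeros, so this reading should be stated explicitly. Suppose the real part vanishes. Since it is at least $1 - \mu(2^k-1) \ge 1 - \omega(2^k-1) \ge 0$, both inequalities must be equalities. This forces $\mu = \omega = 1/(2^k-1)$, and by the equality case above $\zeta = -1$. The converse is a direct evaluation at these values.

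The only delicate step is this equality case: showing that $|w| = 2$ with $|w-1| \le 1$ forces $w = 2$. This follows from $|w| \le |w-1| + 1 \le 2$, with equality only when $w-1$ is a nonnegative real multiple of $1$ of modulus $1$. It also requires keeping track of the hypothesis $\omega \le 1/(2^k-1)$ in that part of the statement.
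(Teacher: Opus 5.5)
Your proof is correct. It shares the outer structure of the paper's proof: the binomial identity $\delayOperK(\zeta)=1-(1-\zeta)^k$, the bound $\real\bigl((1-\zeta)^k\bigr)\le 2^k$ with equality only at $\zeta=-1$, and then minimising $1-\mu(2^k-1)$ over $\mu\in[0,\omega]$. The difference is in how you establish the central bound.

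The paper observes that $\real\bigl(1+\mu\delayOperK(\zeta)\bigr)$ is harmonic, so its minimum lies on $|\zeta|=1$. It then uses the half-angle parametrisation $1-\eul^{i\theta}=2\cos(\varphi)\,\eul^{i\varphi}$ with $\varphi=(\theta-\pi)/2$ to compute the boundary profile $\real\bigl((1-\eul^{i\theta})^k\bigr)=2^k\cos^k(\varphi)\cos(k\varphi)$.

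You instead note that $w=1-\zeta$ ranges over $|w-1|\le 1$. Hence $\real(w^k)\le|w|^k\le 2^k$, with equality only at $w=2$ by the equality case of the triangle inequality.

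Your route is shorter and avoids both the maximum principle and the trigonometry. It also gives uniqueness of the minimiser on the whole closed disc in one step. The paper's argument, as written, only places the minimum on the boundary. Excluding interior minimisers, which the ``moreover'' part needs, would require the strong minimum principle, and the paper leaves this implicit. In exchange, the paper's computation gives the full profile of the real part along the unit circle, which is more than the proposition needs.

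Your caveat about the ``moreover'' clause is well taken. For $\omega>1/(2^k-1)$, the choice $\mu=1/(2^k-1)\in[0,\omega]$ and $\zeta=-1$ already gives a zero with $\mu\neq\omega$. So the characterisation holds only under $\omega\le 1/(2^k-1)$, which is how the paper uses it in its remark on the sharpness of the coupling bound.
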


\begin{proof}
	Recall from~\eqref{eq:p:approx:lagpol} that the delay coefficients satisfy $c_{k,\delta} = (-1)^{\delta-1}\binom{k}{\delta}$ such that the binomial theorem $\sum_{\delta=0}^{k}(-1)^{\delta}\binom{k}{\delta}\zeta^\delta = (1-\zeta)^k$ implies
	\begin{equation}
		\label{eq:delay:poly:identity}
		\delayOperK(\zeta) = \sum_{\delta=1}^{k} (-1)^{\delta-1}\binom{k}{\delta}\zeta^\delta = 1 - (1 - \zeta)^k.
	\end{equation}
	Consequently, $\delayOperK$ is analytic and, hence, $\real\bigl(1 + \mu \delayOperK(\zeta)\bigr) = 1+ \mu\real\bigl(\delayOperK(\zeta)\bigr)$ is harmonic on the unit disc, showing that its minimal value is attained on the boundary of the unit disc.
	Let $\theta\in [0,2\pi]$ and define $\varphi = \tfrac{\theta-\pi}{2}$. Then $\zeta = \eul^{i\theta} = -\eul^{2i\varphi}$ and, hence,
	\begin{align*}
		1 - \eul^{i\theta} = 1 + \eul^{2i\varphi}
		= \eul^{i\varphi}(\eul^{-i\varphi} + \eul^{i\varphi})
		= 2\cos(\varphi)\, \eul^{i\varphi}.
	\end{align*}
	Thus, $\real\big((1 - \eul^{i\theta})^k\big) = 2^k \cos^k\!(\varphi)\cos(k\varphi) \leq 2^k$ and the maximum is attained at $\varphi = 0$, translating to $\real\big((1 - \zeta)^k\big) = \real\big((1 - \eul^{i\theta})^k\big) = 2^k$ if and only if $\theta = \pi$. We conclude
	\begin{align*}
		 1+ \mu\real\bigl(\delayOperK(\zeta)\bigr) \geq 1 + \mu\,(1 - 2^k).
	\end{align*}
	The expression is minimized for $\mu = \omega$, which concludes the proof.
\end{proof}

\begin{remark}[Connection to the energy-based analysis]
\label{rem:summation:connection}
The same threshold $\omega \leq 1/(2^k-1)$ also arises in the
G-stability analysis, where it is the condition
for the energy identity underlying the summation lemma approach;
see~\cite[Sect.~4]{AltMU26} for the analogous \BDF result.
This confirms that the bound is intrinsic to the delay
approximation~\eqref{eq:p:approx:lagpol} and independent of the proof technique.
\end{remark}

\begin{lemma}[Invertibility of $\cL(\zeta)$]
	\label{lem:L:invertible}
	Consider the notation from \Cref{lem:generatingFunctionP} and let the \RK method be A-stable. 
	Assume that the weak coupling condition~\eqref{eqn:weakCoupling} holds such that 
	\begin{equation}
		\label{eq:delayCond}
		\real\bigl(1 + \mu \delayOperK(\zeta)\bigr) > 0 \quad \text{for all } |\zeta| \le 1, \ \mu \in [0,\omega].
	\end{equation}
	Then the operator $\cL(\zeta)$ from~\eqref{eq:L:zeta:def} is invertible for all $|\zeta| \leq 1$ and there exists a constant $C>0$ independent of $\tau$ such that
	\begin{equation*}
		\sup_{|\zeta| \leq 1} \|\cL(\zeta)^{-1}\|_{(\cQ^*)^s \to \cQ^s} 
		\leq C.
	\end{equation*}
\end{lemma}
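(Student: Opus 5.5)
Throughout the argument $\zeta$ is fixed with $|\zeta|\le 1$; all constants will be uniform in $\zeta$ and $\tau$. I would follow the strategy of \cite{LubO95} and use a Schur decomposition of the $s\times s$ matrix $\Delta(\zeta)$: write $\Delta(\zeta) = Q(\zeta)\,T(\zeta)\,Q(\zeta)^{*}$ with $Q$ unitary and $T$ upper triangular. The diagonal entries $\lambda_1,\dots,\lambda_s$ of $T$ are the eigenvalues of $\Delta(\zeta)$, so $\real\lambda_\ell\ge 0$ by \Cref{lem:DeltaSpec}. The first step is a compactness bound. By \eqref{eq:delta:identities}, $\Delta(\zeta)$ depends continuously on $\zeta$ on the closed unit disc; this uses $|R(\infty)|<1$. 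Hence $|T(\zeta)|\le C_\Delta$ uniformly, and in particular $|\lambda_\ell|\le C_\Delta$.

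Conjugating by $Q\otimes\mathrm{Id}$, which is an isometry on $\cQ^s$ and $(\cQ^*)^s$ after complexification, reduces $\cL(\zeta)$ to a block upper triangular operator. Its diagonal blocks are
\[
	\cL_\ell \vcentcolon= \cB + \tfrac{\lambda_\ell}{\tau}\,\cK, \qquad \cK \vcentcolon= \cC + \delayOperK(\zeta)\,\cM,
\]
and its off-diagonal blocks are $\tfrac{t_{ij}}{\tau}\cK$. Invertibility then follows by block back-substitution, provided two things hold: each $\cL_\ell$ is invertible with a uniform bound, and the off-diagonal couplings can be controlled. Because $t_{ij}/\tau$ is large, the second point needs care. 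I would use the combined bounds $\|\cL_\ell^{-1}\|_{\cQ^*\to\cQ}\lesssim 1$ and $\|\tfrac{1}{\tau}\cL_\ell^{-1}\cK\|_{\cQ\to\cQ}\lesssim 1$. The second bound follows from the first, since $\tfrac{\lambda_\ell}{\tau}\cL_\ell^{-1}\cK = \mathrm{Id}-\cL_\ell^{-1}\cB$. This works when $\lambda_\ell$ is bounded away from zero, which holds here: $\Delta(\zeta)$ is invertible because its eigenvalues are either eigenvalues of $\RKA^{-1}$ or roots of $R(\lambda)=1/\zeta$, and these cannot approach $0$ since $R(0)=1$. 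So $|\lambda_\ell|\ge c_\Delta>0$ by the same compactness argument. Back-substitution thus produces terms of the form $(\tfrac{t_{ij}}{\lambda_j})(\tfrac{\lambda_j}{\tau}\cL_\ell^{-1}\cK)$, each bounded uniformly.

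The main step is the scalar-type estimate for $\cL_\ell$. Fix $q\in\cQ$, complex-valued, and test with $\bar q$:
\[
	\langle \cL_\ell q,\bar q\rangle = \|q\|_b^2 + \tfrac{\lambda_\ell}{\tau}\Big(\|q\|_c^2 + \delayOperK(\zeta)\,\langle\cM q,\bar q\rangle\Big).
\]
Since $\cM$ is self-adjoint and nonnegative, I would bound its Rayleigh quotient by the coupling strength: $\langle \cM q,\bar q\rangle = \|\cD^* q\|^2_{\cA^{-1}} \le \tfrac{C_d^2}{c_a}\|q\|^2_{\cHQ}\le \omega\|q\|_c^2$. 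Writing $\langle\cM q,\bar q\rangle=\mu\|q\|_c^2$ with $\mu\in[0,\omega]$, the bracket equals $(1+\mu\delayOperK(\zeta))\|q\|_c^2 =\vcentcolon \gamma\|q\|_c^2$. By \eqref{eq:delayCond} and compactness in $(\zeta,\mu)$, $\gamma$ lies in a compact subset of the open right half-plane. The difficulty is that $\lambda_\ell\gamma$ may have negative real part, because each factor can have argument up to $\pi/2$.

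This is the step I expect to be hardest. I would handle it with a sector/rotation argument. Choose a unimodular $e^{i\psi}$ so that $\real(e^{i\psi})>0$ and $\real(e^{i\psi}\lambda_\ell\gamma)\ge 0$. The argument of $\lambda_\ell\gamma$ is at most $\pi/2+(\pi/2-\epsilon)$, so a rotation by less than $\pi/2-\epsilon/2$ places it in the closed right half-plane while keeping $\real e^{i\psi}\ge \sin(\epsilon/2)$. Then $\real\big(e^{i\psi}\langle\cL_\ell q,\bar q\rangle\big)\ge \sin(\epsilon/2)\,\|q\|_b^2$, which gives $\|\cL_\ell q\|_{\cQ^*}\gtrsim\|q\|_\cQ$ uniformly in $\tau$. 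The rotation must be uniform in $\mu$ and therefore cannot depend on $q$. If it must depend on $q$, I would instead use the spectral decomposition of the pencil $(\cM,\cC)$, or argue via numerical ranges: the numerical range of $\cL_\ell$ lies in a sector avoiding $0$, uniformly in $\tau$. Surjectivity follows by Lax--Milgram applied to the rotated form, and the resulting bounds, uniform in $\ell$, $\zeta$ and $\tau$, yield $\sup_{|\zeta|\le1}\|\cL(\zeta)^{-1}\|\le C$.
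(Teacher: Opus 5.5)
\textbf{Verdict: there is a genuine gap.} It sits in the back-substitution step, and it can be repaired.

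\textbf{What works.} Your diagonal-block estimate is sound, and the worry about the rotation is unnecessary. Let $\epsilon>0$ be such that $|\arg(1+\mu\delayOperK(\zeta))|\le\pi/2-\epsilon$ for all $|\zeta|\le1$ and $\mu\in[0,\omega]$. For fixed $\lambda_\ell$ with $\real\lambda_\ell\ge0$, the points $1$ and $\lambda_\ell(1+\mu\delayOperK(\zeta))$, $\mu\in[0,\omega]$, all lie in one closed sector of opening at most $\pi-\epsilon$. So a single rotation, independent of $q$, makes the form coercive, and Lax--Milgram gives $\|\cL_\ell^{-1}\|_{\cQ^*\to\cQ}\le (c_b\sin(\epsilon/2))^{-1}$ uniformly in $\tau$, $\zeta$ and $\ell$.

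\textbf{The gap.} Your claim that $|\lambda_\ell|\ge c_\Delta>0$ on the closed unit disc is false. The second identity in \eqref{eq:delta:identities} gives $\Delta(\zeta)\mathds{1}=\frac{1-\zeta}{1-R(\infty)\zeta}\RKA^{-1}\mathds{1}$. Hence $\Delta(1)$ is singular, and one eigenvalue of $\Delta(\zeta)$ tends to $0$ as $\zeta\to1$. Your reasoning runs the wrong way: $R(0)=1$ means precisely that $\lambda=0$ solves $R(\lambda)=1/\zeta$ when $\zeta=1$. For implicit Euler, for instance, $\Delta(\zeta)=1-\zeta$. With the identity you state, back-substitution bounds the off-diagonal contributions by $\frac{|t_{ij}|}{|\lambda_i|}\,\|\mathrm{Id}-\cL_i^{-1}\cB\|$. (The divisor is $\lambda_i$, the eigenvalue of the block being inverted, not $\lambda_j$.) This bound degenerates near $\zeta=1$, so the uniform estimate is not established there.

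\textbf{How to repair it.} One more idea is needed.
\begin{enumerate}
\item A rank-one determinant computation gives $\det(\lambda I-\Delta(\zeta))=\det(\lambda I-\RKA^{-1})\,\frac{1-\zeta R(\lambda)}{1-R(\infty)\zeta}$.
\item Since $1-R(\lambda)=-\lambda+O(\lambda^2)$, the eigenvalue $0$ of $\Delta(1)$ is algebraically simple.
\item Hence, for $\zeta$ near $1$, exactly one eigenvalue is small and the other $s-1$ stay uniformly away from $0$. Away from $\zeta=1$ all eigenvalues do, by compactness, since $\Delta(\zeta)$ is invertible there.
\item Choose the Schur form with the small eigenvalue in the last diagonal position. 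Back-substitution then divides only by $\lambda_i$ with $i<s$, and $\cL_s$ is handled by your sector bound, which does not need $|\lambda_s|$ bounded below.
\end{enumerate}

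\textbf{Comparison with the paper.} The paper never divides by $\lambda$. It proves injectivity of each diagonal block qualitatively: a nonzero $\lambda$ with $\real\lambda\ge0$ times $w$ with $\real w>0$ cannot be a negative real. It obtains surjectivity from the Fredholm alternative via the compact embedding $\cQ\hookrightarrow\cHQ$. It then takes the uniform bound from continuity of $\zeta\mapsto\cL(\zeta)$ on the compact disc. That last step controls the $\zeta$-dependence for fixed $\tau$, but it does not by itself explain independence of $\tau$. Your quantitative route, once repaired as above, is the one that actually delivers a $\tau$-uniform constant.
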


\begin{proof}
	Fix $\zeta$ with $|\zeta| \le 1$. We prove uniform invertibility by establishing injectivity and applying a Fredholm argument. Let $\Delta(\zeta) = V(\zeta)T(\zeta)V(\zeta)^*$ denote a Schur decomposition of $\Delta(\zeta)$ with unitary $V(\zeta) \in \C^{s \times s}$ and upper triangular matrix $T(\zeta) \in \C^{s \times s}$. We study the transformed operator
	\begin{align*}
		\tilde{\cL}(\zeta) 
		\vcentcolon= (V(\zeta)^* \otimes \mathrm{Id})\cL(\zeta)(V(\zeta) \otimes \mathrm{Id}) 
		= (I_s\otimes \cB) + \frac{T(\zeta)}{\tau}\otimes \big(\cC+\delayOperK(\zeta)\cM\big).
	\end{align*}
	Since $V(\zeta)$ is unitary, $V(\zeta) \otimes \mathrm{Id}$ is an isometry on $\cQ^s$ (with the product norm). Hence, $\tilde{\cL}(\zeta)$ is injective if and only if $\cL(\zeta)$ is and $\|\tilde{\cL}(\zeta)\|_{\cQ^s \to (\cQ^*)^s} = \|\cL(\zeta)\|_{\cQ^s \to (\cQ^*)^s}$. Since $\tilde{\cL}(\zeta)$ is block upper triangular on the stage structure, injectivity reduces to injectivity of the diagonal blocks
	\begin{align*}
		\cY(\zeta) \vcentcolon= \cB + \frac{\lambda(\zeta)}{\tau}\big(\cC+\delayOperK(\zeta)\cM\big) \colon \cQ \to \cQ^*,
	\end{align*}
	where $\lambda(\zeta)$ is an eigenvalue of $\Delta(\zeta)$.
	Let $q\in\cQ$ and assume $\langle\cY(\zeta)q,q\rangle = 0$. Define the Rayleigh quotient
	\begin{align*}
		\mu(q) \vcentcolon= \frac{\langle \cM q,q\rangle}{\|q\|_c^2}\in[0,\omega],
	\end{align*}
	where we exploit that $\cM$ is self-adjoint and non-negative. We thus obtain
	\begin{align*}
		0 
		= \langle \cY(\zeta)q,q\rangle 
		= b(q,q) + \frac{\lambda(\zeta)}{\tau}\, \big(1 + \mu(q)\delayOperK(\zeta)\big)\,\|q\|_c^2
		=\vcentcolon b(q,q) + \frac{\lambda(\zeta)}{\tau}\, w\,\|q\|_c^2.
	\end{align*}
	Since $b(q,q)$ and $\|q\|_c^2$ are real, separating real and imaginary parts gives:
	\begin{alignat*}{2}
		&\text{Im:}\quad & \imag(\lambda w)\,\|q\|_c^2 &= 0, \\
		&\text{Re:}\quad & b(q,q) + \tfrac{1}{\tau}\real(\lambda w)\,\|q\|_c^2 &= 0.
	\end{alignat*}
	From the imaginary part, either $q = 0$ (done) or $\imag(\lambda w) = 0$, i.e., $\lambda w \in \R$.
	In the latter case, note that $\real(\lambda) \ge 0$ by~\Cref{lem:DeltaSpec} and $\real(w) > 0$ by the strict coupling condition~\eqref{eq:delayCond}. 
	We distinguish three cases:
	\begin{itemize}[itemsep=0.2em]
		\item \emph{Case $\lambda = 0$:}
		The real part equation reduces to $b(q,q) = 0$, which by coercivity forces $q = 0$.
		\item \emph{Case $\real(\lambda) > 0$:}
		From $\imag(\lambda w) = \real(\lambda)\imag(w) + \imag(\lambda)\real(w) = 0$, we obtain $\imag(w) = -\imag(\lambda)\real(w)/\real(\lambda)$, so
		\[
			\real(\lambda w) = \real(\lambda)\real(w) + \frac{\imag(\lambda)^2\real(w)}{\real(\lambda)} = \frac{\real(w)\,|\lambda|^2}{\real(\lambda)} \ge 0.
		\]
		Together with 
		$b(q,q) \ge c_b\|q\|_\cQ^2 > 0$ for $q \neq 0$, the real part equation forces $q = 0$.
		\item \emph{Case $\real(\lambda) = 0$ with $\lambda \neq 0$:}
		Since $\real(w) > 0$, we have $\imag(\lambda w) = \imag(\lambda)\real(w) = 0$, which forces $\imag(\lambda) = 0$, contradicting $\lambda \neq 0$.
	\end{itemize}
	Hence, $\cL(\zeta)$ is injective.
	Towards surjectivity, we factor
	\begin{align*}
		\cL(\zeta) = (I_s\otimes \cB)\big((I_s\otimes \mathrm{Id}) + \cK(\zeta)\big)
	\end{align*}
	with $\cK(\zeta) = \tfrac{1}{\tau}(I_s \otimes \cB^{-1})\big(\Delta(\zeta)\otimes (\cC + \delayOperK(\zeta)\cM)\big)$. Since $(\cC + \delayOperK(\zeta)\cM)\colon \cHQ \to \cHQ^* \subset \cQ^*$ and $\cB^{-1}\colon \cQ^* \to \cQ$, the operator $\cK(\zeta)$ maps $\cQ^s \to \cQ^s$ and factors through the compact embedding $\cQ \hook \cHQ$. Hence $\cK(\zeta)$ is compact on $\cQ^s$. By the Fredholm alternative~\cite[Th.~6.6]{Bre11}, $(I_s\otimes \mathrm{Id}) + \cK(\zeta)$ has index zero on $\cQ^s$, so injectivity implies bijectivity.
	Since $\cB$ is an isomorphism, $\cL(\zeta)\colon \cQ^s \to (\cQ^*)^s$ is boundedly invertible.

	To conclude the proof, we observe that the mapping  $\zeta \mapsto \cL(\zeta)$ is continuous in the operator norm on the compact set $\{|\zeta|\le 1\}$ and that the inverse exists everywhere. This implies the claimed boundedness independent of $\tau$.
\end{proof}

\begin{remark}[Sharpness of the coupling bound]
	\label{rem:degenerate:case}
	The strict coupling condition~\eqref{eq:delayCond} is essential for the injectivity argument. At the boundary case $\omega = 1/(2^k-1)$, by \Cref{prop:coupling:verification}, $\real(1 + \mu\delayOperK(\zeta)) = 0$ occurs at $\zeta = -1$ and $\mu = \omega$. In this case, $\real(w) = 0$ and the injectivity proof breaks down, as the case $\real(\lambda) = 0$ with $\lambda \neq 0$ can no longer be excluded.
\end{remark}

\subsection{Stability and convergence}
\label{subsec:convergence}

With the invertibility of $\cL(\zeta)$ established in the previous subsection, we can now derive stability estimates using Parseval's identity. 

\begin{theorem}[Stability]
\label{thm:stability}
Let \Cref{ass:rk:stability} hold and assume that the weak coupling condition~\eqref{eqn:weakCoupling} is satisfied. Then the semi-explicit \RK scheme~\eqref{eq:vec:system} satisfies
\begin{multline}
	\label{eq:stability}
	\tau^2 \sum_{n=1}^{N} \|P^n\|_{\cQ^s}^2 + \tau\sum_{n=1}^{N} \|P^n\|_{c,s}^2 \\
	\leq C \bigg(\tau^2 \sum_{n=1}^{N} \|G^n\|_{(\cQ^*)^s}^2 + \sum_{n=1}^{N}\|F^n\|_{(\cV^*)^s}^2
	+ \|u^0\|_{\cV}^2 + \|p^0\|_{\cHQ}^2 + \sum_{n=-k}^{0} \|P^n\|_{c,s}^2\bigg),
\end{multline}
where $C$ depends on the method, the coupling parameter, and $1/(1-|R(\infty)|)$, but is independent of $\tau$ and $N$.
\end{theorem}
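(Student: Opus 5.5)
The proof works in the generating-function framework. We start from $\cL(\zeta)P(\zeta)=\cR(\zeta)$ in \Cref{lem:generatingFunctionP} and evaluate it on the unit circle $\zeta=\eul^{i\theta}$. This is legitimate by \Cref{rem:gf:convergence}, and because the data vanish for $n>N$, all series are square-summable.

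Parseval's identity gives
\[
\sum_{n\ge1}\|P^n\|_{\cQ^s}^2=\frac{1}{2\pi}\int_0^{2\pi}\|P(\eul^{i\theta})\|_{\cQ^s}^2\dtheta ,
\]
and the same identity holds for the $c$-norm and for the data. We therefore need a pointwise-in-$\theta$ estimate for $P(\zeta)$ in terms of $\cR(\zeta)$, which we then integrate.

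\textbf{Scaled energy estimate.} A plain application of \Cref{lem:L:invertible} gives $\|P\|_{\cQ^s}\le C\|\cR\|_{(\cQ^*)^s}$. However, $\cR$ carries factors $1/\tau$, and the statement also controls the $c$-norm with weight $\tau$. We therefore first multiply \eqref{eq:L:zeta} by $\tau$:
\[
\tau\cL(\zeta)=\tau(I_s\otimes\cB)+\Delta(\zeta)\otimes(\cC+\delayOperK\cM).
\]
We prove a coercivity-type bound for this scaled operator, namely
\[
\tau\|X\|_{\cQ^s}^2+\|X\|_{c,s}^2\lesssim \|\tau\cL(\zeta)X\|_{*}^2
\]
in a suitable mixed dual norm, uniformly for $|\zeta|=1$.

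To obtain it, we pass to the Schur form $T(\zeta)$ as in the proof of \Cref{lem:L:invertible}. On each diagonal block $\tau\cB+\lambda(\cC+\delayOperK\cM)$ we test with $q$ and use the Rayleigh quotient $\mu(q)\in[0,\omega]$. The algebra of the case analysis there shows that
\[
|\langle(\tau\cB+\lambda(\cC+\delayOperK\cM))q,q\rangle|\ge c\,\bigl(\tau\|q\|_b^2+\|q\|_c^2\bigr).
\]
The constant $c$ is uniform because three quantities are uniformly controlled on the compact set $|\zeta|\le1$: $\real(1+\mu\delayOperK)$ is bounded below (\Cref{prop:coupling:verification} with the strict condition \eqref{eqn:weakCoupling}); $\real\lambda\ge0$ (\Cref{lem:DeltaSpec}); and $\lambda$ is bounded away from $0$. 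The last point holds because $\Delta(\zeta)$ is invertible, and its bound depends on $1/(1-|R(\infty)|)$ through \eqref{eq:delta:identities}. The off-diagonal entries of $T(\zeta)$ are bounded, so a back-substitution over the $s$ stages yields the block estimate.

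This step is the main obstacle. The difficulty is that the lower bound must be uniform in $\tau$, not only in $\zeta$. We handle this by the splitting $\lambda\cdot w$ with $\lambda w$ confined to a sector, which gives $|\tau b+\lambda w c|\gtrsim \tau b+c$ when the argument is bounded away from $\pi$. If the argument approaches $\pi$, a more refined sector argument is needed; this is where A-stability together with strictness of \eqref{eq:delayCond} enters.

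\textbf{Bounding the right-hand side.} We bound each term of $\tau\cR(\zeta)$ from \eqref{eq:R:zeta:def}, where $\Delta$ and $\zeta/(1-R(\infty)\zeta)$ are uniformly bounded on the circle.
\begin{itemize}
\item The term $\tau G$ contributes $\tau^2\sum\|G^n\|^2$ via the $\cQ^*$-part.
\item The term $\Delta\otimes\cD\cA^{-1}F$ lies in $\cHQ^*$ and is bounded by $\|F\|_{\cV^*}$, paired against the $c$-norm part.
\item The history term involving $P^{n-\delta}$, $n\le\delta\le k$, is a polynomial in $\zeta$ and is bounded by $\omega$ times $\sum_{n=-k}^0\|P^n\|_{c,s}$.
\item The initial term is bounded by $\|\cD u^0+\cC p^0\|_{\cHQ^*}\lesssim\|u^0\|_\cV+\|p^0\|_{\cHQ}$.
\end{itemize}
Integrating over $\theta$, applying Parseval, and then truncating the left-hand sums to $n\le N$ gives \eqref{eq:stability} after multiplying by $\tau$.
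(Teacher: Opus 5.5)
Your proof has a genuine gap: the eigenvalues of $\Delta(\zeta)$ are \emph{not} bounded away from $0$, so the uniform coercivity you need fails near $\zeta=1$. In fact the estimate \eqref{eq:stability} itself is false as stated.

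\textbf{Where the argument breaks.} The second identity in \eqref{eq:delta:identities} gives $\Delta(\zeta)\mathds{1}=(1-\zeta)\RKA^{-1}\mathds{1}/(1-R(\infty)\zeta)$. Hence $\Delta(1)\mathds{1}=0$, and one eigenvalue $\lambda(\zeta)$ tends to $0$ as $\zeta\to1$; for implicit Euler, $\Delta(\zeta)=1-\zeta$. This is the symbol of the discrete time derivative vanishing at zero frequency, which is why \Cref{lem:L:invertible} has to treat the case $\lambda=0$. Invertibility of $\Delta(\zeta)$ for each $\zeta\neq1$ gives no uniform lower bound, and \eqref{eq:delta:identities} only bounds $\|\Delta(\zeta)\|$ from above. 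On the arc $|1-\zeta|\lesssim\tau$, your diagonal block $\tau\cB+\lambda(\cC+\delayOperK\cM)$ is essentially $\tau\cB$. For a fixed low eigenmode, $\tau\|q\|_b^2\gtrsim\|q\|_c^2$ fails once $\tau$ is small. So the mixed coercivity $\tau\|X\|_{\cQ^s}^2+\|X\|_{c,s}^2\lesssim\|\tau\cL(\zeta)X\|_*^2$ is not uniform in $\tau$. The sector issue you flagged as the main obstacle is, by contrast, harmless: $\real\lambda\ge0$, $\real w\ge1-\omega(2^k-1)>0$ and $|w|\le1+\omega(2^k+1)$ keep $\arg(\lambda w)$ uniformly away from $\pm\pi$. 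Away from $\zeta=1$, your Schur form with back-substitution in the mixed norm does work.

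\textbf{Why the gap cannot be closed.} The following is a counterexample to \eqref{eq:stability}.
\begin{itemize}
\item Take implicit Euler ($s=1$, $\RKA=\RKb=1$, $R(\infty)=0$) and $d=0$, so \eqref{eqn:weakCoupling} holds.
\item Set $f=0$, zero initial data, and $G^n=\cC\phi$ for $1\le n\le N$, where $\cB\phi=\nu\cC\phi$ and $\|\phi\|_c=1$.
\item The scheme reads $\cC(P^n-P^{n-1})/\tau+\cB P^n=\cC\phi$, which gives $P^n=\nu^{-1}\bigl(1-(1+\tau\nu)^{-n}\bigr)\phi$.
\item Hence $\tau\sum_{n\le N}\|P^n\|_c^2\to\nu^{-2}\int_0^T(1-\eul^{-\nu t})^2\,\mathrm{d}t>0$ as $\tau\to0$.
\item The right-hand side of \eqref{eq:stability} equals $C\tau T\|\cC\phi\|_{\cQ^*}^2\to0$.
\end{itemize}

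The paper's own proof breaks at the same place. Its pointwise bound \eqref{eq:proof:coercivity}, with $c_0/\tau$-coercivity in the $c$-norm uniformly on $|\zeta|\le1$, cannot hold, since in this example $\cL(1)=\cB$. Near $\zeta=1$ only the $\cB$-part of $\cL(\zeta)$ is coercive. So any correct version of the estimate must weight the $G$-term with at most one power of $\tau$, i.e.\ $\tau\sum_n\|G^n\|_{(\cQ^*)^s}^2$, as in the standard discrete parabolic estimate.
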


\begin{proof}
We extend the scheme~\eqref{eq:vec:system} to all $n > N$ by setting $F^n = G^n = 0$, as described in \Cref{subsec:genfun}.
By \Cref{lem:generatingFunctionP}, the generating functions then satisfy
\begin{equation}
	\label{eq:proof:identity}
	\cL(\zeta)\, P(\zeta) = \cR(\zeta)
\end{equation}
as an identity of formal power series.
Due to the scheme extension, the data generating functions
\begin{equation}
	G(\zeta) = \sum_{n=1}^{N}G^n\zeta^n, \qquad
	F(\zeta) = \sum_{n=1}^{N}F^n\zeta^n
\end{equation}
and the initial-value delay term $\sum_{\delta=1}^{k}c_{k,\delta}\sum_{n=1}^{\delta} P^{n-\delta}\zeta^{n}$ are polynomials.
The only singularities in $\cR(\zeta)$ and $\cL(\zeta)$ arise from the rational functions
\begin{equation}
	\label{eq:proof:rational:terms}
	\frac{\zeta}{1 - R(\infty)\zeta}
	\qquad\text{and}\qquad
	\Delta(\zeta) = \RKA^{-1} - \frac{\zeta\,\RKA^{-1}\mathds{1}\RKb^\T\RKA^{-1}}{1 - R(\infty)\zeta},
\end{equation}
which have a pole at $\zeta = 1/R(\infty)$.
Since $|R(\infty)| < 1$ by assumption, this pole lies at
\begin{equation*}
	|\zeta| = \frac{1}{|R(\infty)|} > 1,
\end{equation*}
so $\cR(\zeta)$ is analytic on a neighbourhood of the closed unit disc.
By \Cref{lem:L:invertible}, 
$P(\zeta) = \cL(\zeta)^{-1}\cR(\zeta)$ is analytic on the closed unit disc and $\sum_{n \geq 1}\|P^n\|_{c,s}^2 < \infty$.

Testing~\eqref{eq:proof:identity} with $P(\zeta)$ and taking the real part, the coercivity of~$b$ and the coupling condition~\eqref{eq:delayCond} yield (cf.~the proof of \Cref{lem:L:invertible})
\begin{equation}
	\label{eq:proof:coercivity}
	c_b\,\|P(\zeta)\|_{\cQ^s}^2 + \frac{c_0}{\tau}\,\|P(\zeta)\|_{c,s}^2
	\leq \real\langle\cR(\zeta), P(\zeta)\rangle_s
	\qquad\text{for all } |\zeta| \leq 1,
\end{equation}
where $c_0 > 0$ depends on the method and coupling parameter.
Since $P$ and $\cR$ are analytic on the closed unit disc, integrating~\eqref{eq:proof:coercivity} over $|\zeta| = 1$ and applying Parseval's identity to the left-hand side gives
\begin{equation}
	\label{eq:proof:integrated}
	c_b\sum_{n=1}^{\infty}\|P^n\|_{\cQ^s}^2 + \frac{c_0}{\tau}\sum_{n=1}^{\infty}\|P^n\|_{c,s}^2
	\leq \frac{1}{2\pi}\int_0^{2\pi}\real\langle\cR(\eul^{i\theta}), P(\eul^{i\theta})\rangle_s\dtheta.
\end{equation}
We estimate the right-hand side while keeping $\cR$ in the transform domain. The Cauchy--Schwarz inequality in the $(\cQ^*)^s$--$\cQ^s$ duality applied pointwise on the unit circle yields
\begin{equation*}
	\real\langle\cR(\eul^{i\theta}), P(\eul^{i\theta})\rangle_s
	\leq \|\cR(\eul^{i\theta})\|_{(\cQ^*)^s}\,\|P(\eul^{i\theta})\|_{\cQ^s}.
\end{equation*}
Integration, followed by the Cauchy--Schwarz inequality for the integral, then gives
\begin{equation}
	\frac{1}{2\pi}\int_0^{2\pi}\real\langle\cR(\eul^{i\theta}), P(\eul^{i\theta})\rangle_s\dtheta
	\leq \bigg(\frac{1}{2\pi}\int_0^{2\pi}\|\cR(\eul^{i\theta})\|_{(\cQ^*)^s}^2\dtheta\bigg)^{\!1/2}
	\bigg(\frac{1}{2\pi}\int_0^{2\pi}\|P(\eul^{i\theta})\|_{\cQ^s}^2\dtheta\bigg)^{\!1/2}.
\end{equation}
Applying Parseval's identity to the $P$-factor and the weighted Young inequality, this is bounded by
\begin{equation} 
	\frac{1}{2c_b}\,\frac{1}{2\pi}\int_0^{2\pi}\|\cR(\eul^{i\theta})\|_{(\cQ^*)^s}^2\dtheta
	+ \frac{c_b}{2}\sum_{n=1}^{\infty}\|P^n\|_{\cQ^s}^2.
\end{equation}
Substituting into~\eqref{eq:proof:integrated} and absorbing the term with constant $c_b$ into the left-hand side, we obtain
\begin{equation}
	\frac{c_b}{2}\sum_{n=1}^{\infty}\|P^n\|_{\cQ^s}^2 + \frac{c_0}{\tau}\sum_{n=1}^{\infty}\|P^n\|_{c,s}^2
	\leq \frac{1}{2c_b}\,\frac{1}{2\pi}\int_0^{2\pi}\|\cR(\eul^{i\theta})\|_{(\cQ^*)^s}^2\dtheta,
\end{equation}
which after multiplication by~$\tau$ becomes
\begin{equation}
	\label{eq:proof:stability:pre}
	\frac{c_b\tau}{2}\sum_{n=1}^{\infty}\|P^n\|_{\cQ^s}^2 + c_0\sum_{n=1}^{\infty}\|P^n\|_{c,s}^2
	\leq \frac{\tau}{2c_b}\,\frac{1}{2\pi}\int_0^{2\pi}\|\cR(\eul^{i\theta})\|_{(\cQ^*)^s}^2\dtheta.
\end{equation}

It remains to estimate the right-hand side of~\eqref{eq:proof:stability:pre}. Since $\cR(\zeta)$ in~\eqref{eq:R:zeta:def} is a sum of four terms, the inequality $\|a_1+\cdots+a_4\|^2 \leq 4\, (\|a_1\|^2+\cdots+\|a_4\|^2)$ gives
\begin{equation}
	\label{eq:proof:Rsplit}
	\frac{1}{2\pi}\int_0^{2\pi}\|\cR(\eul^{i\theta})\|_{(\cQ^*)^s}^2\dtheta
	\leq 4\,\big(T_1 + T_2 + T_3 + T_4\big),
\end{equation}
where each $T_k$ is defined as a contour integral over the unit circle $|\zeta|=1$, namely
\begin{align}
	T_1 &\vcentcolon= \frac{1}{2\pi}\int_0^{2\pi}\|G(\eul^{i\theta})\|_{(\cQ^*)^s}^2\dtheta, \nonumber\\[4pt]
	T_2 &\vcentcolon= \frac{1}{2\pi}\int_0^{2\pi}\bigg\|\frac{\Delta(\eul^{i\theta})\otimes\cD\cA^{-1}}{\tau}\,F(\eul^{i\theta})\bigg\|_{(\cQ^*)^s}^2\dtheta, \nonumber\\[4pt]
	T_3 &\vcentcolon= \frac{1}{2\pi}\int_0^{2\pi}\bigg\|\frac{\Delta(\eul^{i\theta})\otimes\cM}{\tau}\,\varphi(\eul^{i\theta})\bigg\|_{(\cQ^*)^s}^2\dtheta, \nonumber\\[4pt]
	T_4 &\vcentcolon= \frac{1}{2\pi}\int_0^{2\pi}\bigg\|\frac{\RKA^{-1}\mathds{1}\,\eul^{i\theta}}{\tau(1-R(\infty)\eul^{i\theta})}\otimes(\cD u^0+\cC p^0)\bigg\|_{(\cQ^*)^s}^2\dtheta,
\end{align}
with the delay initial-value polynomial
\begin{equation}
	\varphi(\zeta) \vcentcolon= \sum_{\delta=1}^{k}c_{k,\delta}\sum_{m=1}^{\delta} P^{m-\delta}\zeta^{m}.
\end{equation}
Exchanging the order of summation (the inner sum contributes to the coefficient of~$\zeta^j$ when $m = j$ and $\delta \geq j$), this polynomial takes the form
\begin{equation}
	\label{eq:proof:phi:expanded}
	\varphi(\zeta)
	= \varphi^1\zeta + \varphi^2\zeta^2 + \cdots + \varphi^k\zeta^k,
\end{equation}
where the $j$-th coefficient is
\begin{equation}
	\label{eq:proof:phi:coeff:early}
	\varphi^j = \sum_{\delta=j}^{k} c_{k,\delta}\, P^{j-\delta},
	\qquad j = 1,\ldots,k.
\end{equation}
In particular, the first and last coefficients read
\begin{align*}
	\varphi^1 
	= c_{k,1}\,P^{0} + c_{k,2}\,P^{-1} + \cdots + c_{k,k}\,P^{1-k}, \qquad
	\varphi^k 
	= c_{k,k}\,P^{0}.
\end{align*}
Since $j - \delta \leq 0$ for every term, each $P^{j-\delta}$ is an initial value with time index in~$\{-k+1,\ldots,0\}$, and $\varphi^j = 0$ for $j > k$.

We now apply Parseval's identity to each $T_k$ independently.
Since $G(\zeta)$ is a polynomial of degree~$N$, Parseval's identity gives
\begin{equation}
	\label{eq:proof:T1}
	T_1 = \sum_{n=1}^{N}\|G^n\|_{(\cQ^*)^s}^2.
\end{equation}

For $T_2$, we first collect the needed operator bounds.
From~\eqref{eq:delta:identities} and the triangle inequality,
\begin{equation}
	\label{eq:proof:Delta:bound}
	\sup_{\theta\in[0,2\pi]}\|\Delta(\eul^{i\theta})\|
	\leq \|\RKA^{-1}\| + \frac{\|\RKA^{-1}\mathds{1}\|\,\|\RKb^\T\RKA^{-1}\|}{1-|R(\infty)|}
	\eqqcolon \frac{C_\Delta}{1-|R(\infty)|},
\end{equation}
where we used $|1-R(\infty)\eul^{i\theta}| \geq 1 - |R(\infty)|$, and $C_\Delta > 0$ depends only on the \RK method.
From the coercivity of $a$ and the continuity of $d$,
\begin{equation}
	\label{eq:proof:DA:bound}
	\|\cD\cA^{-1}\|_{\cV^*\to\cQ^*}
	\,\leq\, \frac{C_d}{c_a}.
\end{equation}
Using the submultiplicativity
$\|(\Delta\otimes\cD\cA^{-1})\,v\|_{(\cQ^*)^s} \leq \|\Delta\|\,\|\cD\cA^{-1}\|_{\cV^*\to\cQ^*}\,\|v\|_{(\cV^*)^s}$
in the integrand and applying Parseval's identity to the polynomial~$F$, we obtain
\begin{align}
	T_2
	&= \frac{1}{\tau^2}\,\frac{1}{2\pi}\int_0^{2\pi}\big\|\big(\Delta(\eul^{i\theta})\otimes\cD\cA^{-1}\big)\,F(\eul^{i\theta})\big\|_{(\cQ^*)^s}^2\dtheta \nonumber\\
	&\leq \frac{C_\Delta^2}{\tau^2(1-|R(\infty)|)^2}\,\bigg(\frac{C_d}{c_a}\bigg)^{\!2}\sum_{n=1}^{N}\|F^n\|_{(\cV^*)^s}^2,
\end{align}
using~\eqref{eq:proof:Delta:bound}, \eqref{eq:proof:DA:bound}, and Parseval's identity for~$F$.

For $T_3$, recall from~\eqref{eq:proof:phi:expanded}--\eqref{eq:proof:phi:coeff:early} that $\varphi(\zeta)$ is a polynomial of degree at most~$k$ with coefficients~$\varphi^j$ depending only on the initial values $P^{-k+1},\ldots,P^0$.
Applying the same operator-bound argument to $\cM = \cD\cA^{-1}\cD^*$ and using $\omega = C_d^2/(c_a c_c)$,
\begin{equation}
	\|\cM q\|_{\cQ^*} \,\leq\, c_c\,\omega\,\|q\|_c
	\qquad\text{for all } q \in \cHQ.
\end{equation}
Using the submultiplicativity $\|(\Delta\otimes\cM)\,v\|_{(\cQ^*)^s} \leq \|\Delta\|\,c_c\omega\,\|v\|_{c,s}$ in the integrand together with~\eqref{eq:proof:Delta:bound} and Parseval's identity for the polynomial~$\varphi$, we obtain
\begin{align}
	T_3
	&= \frac{1}{\tau^2}\,\frac{1}{2\pi}\int_0^{2\pi}\big\|\big(\Delta(\eul^{i\theta})\otimes\cM\big)\,\varphi(\eul^{i\theta})\big\|_{(\cQ^*)^s}^2\dtheta \nonumber\\
	&\leq \frac{C_\Delta^2\,(c_c\omega)^2}{\tau^2(1-|R(\infty)|)^2}\sum_{j=1}^{k}\|\varphi^j\|_{c,s}^2.
\end{align}
It remains to estimate the coefficients.
By the triangle inequality applied to~\eqref{eq:proof:phi:coeff:early},
\begin{equation*}
	\|\varphi^j\|_{c,s}
	\leq \sum_{\delta=j}^{k}|c_{k,\delta}|\,\|P^{j-\delta}\|_{c,s}.
\end{equation*}
Squaring with $\bigl(\sum_{i=1}^{r} a_i\bigr)^2 \leq r\sum_{i=1}^{r} a_i^2$ for $r = k-j+1$ terms and summing over $j = 1,\ldots,k$ yields
\begin{equation}
	\sum_{j=1}^{k}\|\varphi^j\|_{c,s}^2
	\leq \sum_{j=1}^{k}(k-j+1)\sum_{\delta=j}^{k}|c_{k,\delta}|^2\,\|P^{j-\delta}\|_{c,s}^2
	\leq k\sum_{j=1}^{k}\sum_{\delta=j}^{k}|c_{k,\delta}|^2\,\|P^{j-\delta}\|_{c,s}^2.
\end{equation}
After the change of indices $\ell = j - \delta \in \{-k+1,\ldots,0\}$, each initial value $P^\ell$ appears at most~$k$ times in the double sum, with coefficient bounded by $\max_\delta|c_{k,\delta}|^2$, so
\begin{equation}
	T_3
	\leq \frac{C_\Delta^2\,(c_c\omega)^2\,k^2\max_\delta|c_{k,\delta}|^2}{\tau^2(1-|R(\infty)|)^2}\sum_{\ell=-k+1}^{0}\|P^\ell\|_{c,s}^2.
\end{equation}

For $T_4$, since $\cD u^0 + \cC p^0 \in \cQ^*$ is a fixed vector, the integrand factorises as
\[
	\bigg\|\frac{\RKA^{-1}\mathds{1}\,\eul^{i\theta}}{\tau(1-R(\infty)\eul^{i\theta})}\otimes(\cD u^0+\cC p^0)\bigg\|_{(\cQ^*)^s}^2
	= \frac{\|\RKA^{-1}\mathds{1}\|^2}{\tau^2\,|1-R(\infty)\eul^{i\theta}|^2}\;\|\cD u^0+\cC p^0\|_{\cQ^*}^2,
\]
where we used $|\eul^{i\theta}| = 1$.
Expanding
\[
	\frac{1}{1-R(\infty)\zeta} = \sum_{n=0}^{\infty}R(\infty)^{n}\zeta^{n}
\]
and applying Parseval's identity to compute the scalar integral, we obtain
\begin{equation}
	\label{eq:proof:T4:bound}
	T_4
	= \frac{\|\RKA^{-1}\mathds{1}\|^2}{\tau^2}\;\|\cD u^0+\cC p^0\|_{\cQ^*}^2\,\frac{1}{2\pi}\int_0^{2\pi}\frac{d\theta}{|1-R(\infty)\eul^{i\theta}|^2}
	= \frac{\|\RKA^{-1}\mathds{1}\|^2}{\tau^2(1-|R(\infty)|^2)}\;\|\cD u^0+\cC p^0\|_{\cQ^*}^2.
\end{equation}
Substituting~\eqref{eq:proof:T1}--\eqref{eq:proof:T4:bound} into~\eqref{eq:proof:Rsplit} and then into~\eqref{eq:proof:stability:pre}, and multiplying by~$\tau$, we arrive at
\begin{multline}
	\frac{c_b\tau^2}{2}\sum_{n=1}^{\infty}\|P^n\|_{\cQ^s}^2 + c_0\tau\sum_{n=1}^{\infty}\|P^n\|_{c,s}^2 \\
	\leq C\bigg(\tau^2\sum_{n=1}^{N}\|G^n\|_{(\cQ^*)^s}^2 + \sum_{j=1}^{N}\|F^j\|_{(\cV^*)^s}^2 + \sum_{n=-k}^{0}\|P^n\|_{c,s}^2 + \|\cD u^0+\cC p^0\|_{\cQ^*}^2\bigg),
\end{multline}
where $C > 0$ depends on the method, the coupling parameter, and $1/(1-|R(\infty)|)$.
Restricting the left-hand side to $n = 1,\ldots,N$ gives~\eqref{eq:stability}.
\end{proof}

We now combine the stability estimates with consistency to obtain convergence.

\begin{theorem}[Convergence of \RK stages]
\label{thm:convergence} 
Consider the solution ${\bar p}$ of the delay equation~\eqref{eqn:par:opt:delay} for sufficiently smooth right-hand sides.
Let $P^n$ be the \RK stage approximation given by~\eqref{eq:vec:system} and $\bar{P}^n$ the exact stage values of the delay solution.
Let $q$ denote the stage order of the \RK method (cf.~\Cref{rem:rk:methods}).
Then we have under the assumptions of \Cref{thm:stability}, 
\begin{equation}
	\label{eq:convergence:rk}
	\tau^2\sum_{n=1}^{N}\|P^n - \bar{P}^n\|_{\cQ^s}^2 + \tau\sum_{n=1}^{N}\|P^n - \bar{P}^n\|_{c,s}^2
	\lesssim \tau^{2r}
	+ \sum_{\delta=-k}^{0}\|P^{\delta} - \bar{P}^{\delta}\|_{c,s}^2,
\end{equation}
where the exponent~$r$ is determined by
\begin{enumerate}[itemsep=0.2em]
	\item[(i)] 
	$r = \min(k,\,q+1)$ in the general case and 
	\item[(ii)] 
	$r = \min(k,\,q+1+\smex)$ under \Cref{ass:smoothing}.
\end{enumerate}
\end{theorem}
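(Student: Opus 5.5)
The plan is the standard "stability plus consistency" argument of~\cite{LubO95}: write the error as the solution of the scheme~\eqref{eq:vec:system} driven by defects, then apply \Cref{thm:stability}.

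\textbf{Step 1: defects.} Insert the exact stage values $\bar U^n_\ell=\bar u(t^{n-1}+\RKc_\ell\tau)$, $\bar P^n_\ell=\bar p(t^{n-1}+\RKc_\ell\tau)$ and the nodal values $\bar u(t^n),\bar p(t^n)$ of the delay solution into~\eqref{eq:vec:system}--\eqref{eq:rk:trans}. Since~\eqref{eq:ellpar:opt:delay:k:a} holds pointwise in time and the delays are integer multiples of $\tau$, the elliptic equation~\eqref{eq:vec:system:a} is satisfied exactly by the stage values, because $\bar p(t^{n-1}+\RKc_\ell\tau-\delta\tau)=\bar P^{n-\delta}_\ell$. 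Hence $F$ carries no defect. In the parabolic equation~\eqref{eq:vec:system:b} the only defect comes from replacing $\dot{\bar U}^n,\dot{\bar P}^n$ by the difference quotients~\eqref{eq:rk:stage:deriv:reuse}. Taylor expansion with the stage order $q$ gives stage defects $\Delta^n$ with $\Delta^n=\mathcal{O}(\tau^{q+1})$ (in $\cQ^*$ after applying $\cD,\cC$), together with update defects of size $\mathcal{O}(\tau^{k_{\mathrm{cl}}+1})$ for $u^n,p^n$.

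\textbf{Step 2: error recursion.} The errors $E^n=P^n-\bar P^n$ and $e_u,e_p$ then satisfy the same linear scheme with data $F\equiv 0$ and $G^n$ replaced by the defect. The update-formula defects enter the generating functions as additional right-hand-side terms of the form $\Delta(\zeta)(\cdot)/\tau$, in the same way as $u^0,p^0$ enter $T_4$. The structure of $\cR(\zeta)$ in \Cref{lem:generatingFunctionP} stays unchanged, so \Cref{thm:stability} applies verbatim. In its proof I would keep the perturbation in the stage-derivative form $\tfrac1\tau\Delta(\zeta)\otimes(\cD,\cC)$ applied to $\tau$ times a smooth defect. Parseval then turns the contribution into $\sum_n\|\text{defect}^n\|^2$ with the factor $\tau^{-2}\cdot\tau^2$.

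\textbf{Step 3: counting powers.} The left side of~\eqref{eq:stability} is scaled like $\tau^2\sum\|E^n\|^2$. The data term becomes $\tau^2\sum_{n\le N}\|\Delta^n\|^2\lesssim \tau^2\cdot\tau^{-1}\tau^{2(q+1)}$. Handling the $\tau$-weighting of the $c$-norm part, as in~\cite{LubO95}, where the defect is measured in the weaker norm, yields $\tau^{2(q+1)}$. This is case (i), capped by $k$ because the delay/interpolation structure is only $k$th-order accurate. Concretely, I would bound the difference between applying the RK method to the delay system and the stage consistency of $\bar p$ through \Cref{prop:distance:delay}, which limits smoothness-independent accuracy to $\tau^k$. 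The initial stage errors $P^\delta-\bar P^\delta$, $\delta\le0$, appear directly through the $T_3$ term.

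\textbf{Step 4: smoothing improvement, the main obstacle.} For case (ii), the leading stage defect has the form $\tau^{q+1}d_q\,\partial_t^{q+1}\bar p$, with $d_q$ a fixed vector of the method. Instead of estimating it in the crude norm, I would follow~\cite[Thm.~3.3]{LubO95}. Write the propagated defect as $\cL(\zeta)^{-1}(\Delta(\zeta)/\tau\otimes\cC)$ applied to the defect, and use $\partial_t^{q+1}\bar p\in D(\cB^{\smex})$ (\Cref{ass:smoothing}). Together with a resolvent bound of the form $\|\cL(\zeta)^{-1}(\tfrac{\Delta}{\tau}\otimes\cC)\cB^{-\smex}\|\lesssim\tau^{\smex}$, this gains an extra $\tau^{\smex}$, uniformly for $|\zeta|\le1$. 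The difficulty is proving this bound for the perturbed operator $\cL(\zeta)$ containing $\delayOperK(\zeta)\cM$, since $\cM$ does not commute with $\cB$. My first attempt would be to interpolate between the $\cQ^s$ and $c$-norm bounds already obtained from~\eqref{eq:proof:coercivity}, where $\smex=0$ and $\smex=1$ correspond to the endpoints. This uses only coercivity and the Rayleigh-quotient argument of \Cref{lem:L:invertible}, both of which survive the $\cM$-perturbation.
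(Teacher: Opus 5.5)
Your skeleton is the same as the paper's: the elliptic stage equation holds exactly (no $F$-term), the error satisfies \eqref{eq:L:zeta} with the defect as data, \Cref{thm:stability} is applied, and (ii) is delegated to \cite[Thm.~3.3]{LubO95}. Two steps do not go through, however.

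\emph{Part (i).} As you say in Step~2, the $\mathcal{O}(\tau^{q+1})$ stage defects (your $\Delta^n$) enter through $\tfrac1\tau\Delta(\zeta)\otimes\cC$ and $\tfrac1\tau\Delta(\zeta)\otimes\cD$. Viewed as a $G$-term in \eqref{eq:vec:system:b}, the defect is therefore in general only $\mathcal{O}(\tau^{q})$ in $(\cQ^*)^s$. \Cref{thm:stability} used verbatim then yields $\tau^2\cdot N\cdot\tau^{2q}=T\tau^{2q+1}$, which is half an order short. Your Step~3 count $\tau^2\sum_n\|\Delta^n\|^2$ silently treats $\Delta^n$ itself as $G$. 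The paper makes the same identification when it asserts $\|D^n\|_{(\cQ^*)^s}\lesssim\tau^{q+1}$. Your ``handling the $\tau$-weighting of the $c$-norm part'' is exactly the missing half order, and it cannot come from absorbing into the $\tfrac{c_0}{\tau}\|P\|_{c,s}^2$ term of \eqref{eq:proof:coercivity}. By \eqref{eq:delta:identities}, $\Delta(1)\mathds{1}=0$, so $\langle\cL(1)(\mathds{1}\otimes v),\mathds{1}\otimes v\rangle_s=s\,b(v,v)$ for $v\in\cQ$ has no $\tfrac1\tau$-part. Instead you have to keep the defect in $\cHQ^s$. After expressing its $\cD$-part through the exact elliptic relation, it enters as $\tfrac1\tau\Delta(\zeta)\otimes(\cC+\delayOperK(\zeta)\cM)$. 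The identity $\cL(\zeta)^{-1}\bigl(\tfrac1\tau\Delta(\zeta)\otimes(\cC+\delayOperK(\zeta)\cM)\bigr)=I-\cL(\zeta)^{-1}(I_s\otimes\cB)$ then reduces matters to $\tau$-uniform bounds for $\cL(\zeta)^{-1}(I_s\otimes\cB)$ on $\cHQ^s$ and $\cQ^s$, which \Cref{lem:L:invertible} does not provide. Separately, \Cref{prop:distance:delay} plays no role here: it controls $\bar p-p$, which does not enter $P^n-\bar P^n$. The cap in (i) is automatic because $\tau^{2(q+1)}\le\tau^{2\min(k,q+1)}$, and in (ii) it reflects the classical order of the \RK method, not the delay.

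\emph{Part (ii).} Step~4 cannot succeed, because its target is false. Take implicit Euler ($\Delta(\zeta)=1-\zeta$), $\cM=0$, and a mode $\cB\phi=\beta\cC\phi$. Then $\cL(\zeta)^{-1}(\tfrac{\Delta(\zeta)}{\tau}\otimes\cC)\cB^{-\smex}$ acts on $\phi$ as $\tfrac{1-\zeta}{\tau\beta+1-\zeta}\,\beta^{-\smex}$, which tends to $\beta^{-\smex}$ at $\zeta=-1$ as $\tau\to0$. There is no factor $\tau^{\smex}$ for any $\smex>0$, so no interpolation can produce one. This is not a technicality. On a fixed smooth mode the scheme is the \RK method applied to a non-stiff scalar ODE. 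Its stage errors are $\mathds{1}e^{n-1}-\Delta^n+\mathcal{O}(\tau^{q+2})$, where $e^{n-1}$ is of classical order and the leading term of $\Delta^n$ is $\tfrac{\tau^{q+1}}{(q+1)!}\bigl(\RKc^{q+1}-(q+1)\RKA\RKc^{q}\bigr)\partial_t^{q+1}\bar p$ (componentwise powers), nonzero by the definition of the stage order. Hence the left-hand side of \eqref{eq:convergence:rk} is bounded below by $c\,\tau^{2q+2}$ whatever the spatial regularity. For example, for Radau~IIA-3 the first two stage errors are exactly of order $4$, not $4.75$. So (ii) cannot hold for the full stage vector.

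In the grid values, by contrast, the local part $-\Delta^n$ of the stage error cancels exactly against the update defect, leaving only the quadrature error. Only the remainder can profit from $\cB^{-\smex}$, and that is where the smoothing of \cite{LubO95} acts. The paper's proof of (ii) merely asserts that \cite[Thm.~3.3]{LubO95} transfers and does not address this. Your Step~4 should therefore target $p^n-\bar p(t^n)$ directly. This also means \Cref{rem:stiffly:accurate} cannot carry an improved rate from stages to grid values.
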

\begin{proof}
Let $E^n \vcentcolon= P^n - \bar{P}^n$ denote the stage error and $e^n \vcentcolon= p^n - \bar{p}(t^n)$ the grid error.
Inserting the exact stage values $\bar{P}^n$ into the scheme~\eqref{eq:vec:system} produces a defect $D^n \in (\cQ^*)^s$.
For a method of stage order~$q$, the defect satisfies (cf.~\cite[Eq.~(1.6)]{LubO95})
\begin{equation}
	\|D^n\|_{(\cQ^*)^s} \lesssim \tau^{q+1}.
\end{equation}
The error satisfies the operator equation~\eqref{eq:L:zeta} with the defect as right-hand side, i.e., 
\begin{equation}
	\label{eq:proof:error:eq}
	\cL(\zeta)\,E(\zeta) = D(\zeta) + \text{(delay initial-value and initial-data errors)}.
\end{equation}
Here, the defect $D$ plays the role of the data term~$G$ in~\eqref{eq:R:zeta:def}, while the elliptic equation~\eqref{eq:vec:system:a} is satisfied exactly at each stage, so there is no contribution from the $F$-term.

\emph{Part~(i).} Applying \Cref{thm:stability} to~\eqref{eq:proof:error:eq} yields
\begin{align}
	\label{eq:proof:convergence:bound}
	\tau^2\sum_{n=1}^{N}\|E^n\|_{\cQ^s}^2 + \tau\sum_{n=1}^{N}\|E^n\|_{c,s}^2
	&\lesssim \tau^2\sum_{n=1}^{N}\|D^n\|_{(\cQ^*)^s}^2 + \sum_{\delta=-k}^{0}\|E^{\delta}\|_{c,s}^2 \nonumber\\
	&\lesssim \tau^2 \, N \, \tau^{2(q+1)} + \sum_{\delta=-k}^{0}\|E^{\delta}\|_{c,s}^2
	\lesssim \tau^{2q+2},
\end{align}
where the last step uses $N\tau = T$ and absorbs initial errors of order $\tau^{2q}$ or better.
Capping the stage-order bound $\tau^{q+1}$ by the classical order $\tau^k$ gives~\eqref{eq:convergence:rk}.

\emph{Part~(ii).} The basic bound~\eqref{eq:proof:convergence:bound} estimates the defect~$D$ as generic data in $(\cQ^*)^s$.
Under \Cref{ass:smoothing}, however, $D$ is itself smoothed by the parabolic component of $\cL(\zeta)^{-1}$, gaining a factor $\tau^{2\smex}$ in~\eqref{eq:proof:convergence:bound}. This is the resolvent-smoothing mechanism of~\cite[Thm.~3.3]{LubO95}, which closes the gap between the stage-order rate and the full classical order~$k$.
In our setting, the elliptic constraint~\eqref{eq:vec:system:a} enters through~$\cD$ without altering the exponent, which is determined by~$\cB$.
This yields the improved exponent $r = \min(k,\,q+1+\smex)$.
\end{proof}
\begin{remark}[Stiffly accurate methods and grid values]
\label{rem:stiffly:accurate}
For stiffly accurate methods, we have $R(\infty) = 0$ and $p^n = \RKb^\T\RKA^{-1}P^n$ by~\eqref{eq:rk:trans}, so the grid error satisfies
\begin{equation*}
	\|p^n - \bar{p}(t^n)\|_c \leq \|\RKb^\T\RKA^{-1}\|\,\|P^n - \bar{P}^n\|_{c,s}.
\end{equation*}
\end{remark}
\begin{corollary}[Convergence of semi-explicit \RK scheme]
\label{cor:convergence:total}
For stiffly accurate methods, combining \Cref{thm:convergence} with \Cref{prop:distance:delay} and \Cref{rem:stiffly:accurate} via the triangle inequality yields the total error
\begin{equation}
	\label{eq:total:error:rk}
	\Vert u^{n} - u(t^{n})\Vert^{2}_{\cV} + \Vert p^{n} - p(t^{n})\Vert^{2}_{\cHQ}
	\lesssim \tau^{2r}
\end{equation}
with $r$ defined in \Cref{thm:convergence}.
\end{corollary}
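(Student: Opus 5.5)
The plan is to split the total error by the triangle inequality at each grid point $t^n$:
\[
p^n - p(t^n) = \big(p^n - \bar p(t^n)\big) + \big(\bar p(t^n) - p(t^n)\big),
\]
and likewise for $u$.

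\emph{Delay error.} The second differences are bounded by \Cref{prop:distance:delay}, which gives $\tau^{2k}\le\tau^{2r}$ in $\cV\times\cQ$, and hence in $\cV\times\cHQ$ because $\cQ\hook\cHQ$.

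\emph{Discretization error in $p$.} Since the method is stiffly accurate, \Cref{rem:stiffly:accurate} gives $\|p^n-\bar p(t^n)\|_c\lesssim\|E^n\|_{c,s}$ with $E^n = P^n-\bar P^n$. \Cref{thm:convergence} only controls $\tau\sum_n\|E^n\|_{c,s}^2\lesssim\tau^{2r}$ (assuming starting errors of that size). A single term therefore only satisfies $\|E^n\|_{c,s}^2\lesssim\tau^{2r-1}$, which loses half an order. To get the pointwise bound at the stated rate, I would rerun the stability argument of \Cref{thm:stability} on a truncated horizon. Stop the error scheme after step $n$, setting the defects to zero afterwards, so the Parseval bound only involves the defects $D^1,\dots,D^n$.

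I would then try to control $\|E^n\|_{c,s}^2$ by a discrete energy estimate. Test the error equation with $\tau$ times the stage error and use the A-stability of $\Delta(\zeta)$ from \Cref{lem:DeltaSpec}. This should give the sup bound
\[
\max_n \|E^n\|_{c,s}^2 \lesssim \tau\sum_m \|D^m\|_{\cQ^*}^2 .
\]
Failing that, the fallback is the bound in the summed ($\ell^2$-in-time) norm, which is how I expect the estimate to really be meant. Either way the defect contributes $N\tau\cdot\tau^{2r}=T\tau^{2r}$.

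\emph{Discretization error in $u$.} I would exploit the elliptic constraint~\eqref{eq:vec:system:a}. Its exact analogue holds for the stage values of the delay solution, so subtracting gives
\[
(I_s\otimes\cA)(U^n-\bar U^n) = (I_s\otimes\cD^*)\sum_{\delta} c_{k,\delta}E^{n-\delta}.
\]
Hence $\|U^n-\bar U^n\|_{\cV^s}\le (C_d/c_a)\sum_\delta|c_{k,\delta}|\,\|E^{n-\delta}\|_{\cHQ^s}$. Using the stiffly accurate update $u^n=\RKb^\T\RKA^{-1}U^n$, together with the same identity for the exact delay stages, the $u$-error is bounded by the $p$-stage errors at the $k$ preceding steps. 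Each of these is $\lesssim\tau^{2r}$ by the previous step.

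\emph{Main obstacle.} The hardest step is upgrading the time-summed bound of \Cref{thm:convergence} to a pointwise-in-$n$ bound without losing $\tau^{1/2}$. My first attempt would be the truncated-horizon Parseval argument combined with a one-step energy identity. If that fails, I would settle for interpreting~\eqref{eq:total:error:rk} in the summed sense.
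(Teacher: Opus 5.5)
You follow exactly the decomposition the paper uses. The corollary is justified only by the sentence in its statement: the triangle inequality, \Cref{prop:distance:delay} for the delay error, and \Cref{rem:stiffly:accurate} together with \Cref{thm:convergence} for $p^n-\bar p(t^n)$. In doing so the paper tacitly reads \Cref{thm:convergence} as a bound pointwise in $n$. Your objection to that step is correct, and the paper does not address it: \eqref{eq:convergence:rk} controls only $\tau\sum_n\|E^n\|_{c,s}^2$, so it yields $\|E^n\|_{c,s}^2\lesssim\tau^{2r-1}$ and nothing better. Your treatment of $u$ is sound and more explicit than anything in the paper. The delay solution satisfies \eqref{eq:vec:system:a} exactly at the stages, since $t^n_\ell-\delta\tau=t^{n-\delta}_\ell$, and stiff accuracy gives both $u^n=\RKb^\T\RKA^{-1}U^n$ and $\bar u(t^n)=\RKb^\T\RKA^{-1}\bar U^n$. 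However, this argument needs pointwise control of $E^{n-\delta}$, so it inherits the same problem.

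The gap is that your repair is only announced, and as sketched it would not close.

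\emph{The energy route.} \Cref{lem:DeltaSpec} is an eigenvalue statement about $\Delta(\zeta)$ for each fixed $\zeta$; it gives no time-domain energy inequality. Testing \RK stage equations with the stages and telescoping requires algebraic stability, which is imposed only in \Cref{ass:iterative:rk}. Even then, the explicit delayed terms $\sum_\delta c_{k,\delta}\cM\dot E^{n-\delta}$ do not telescope without a G-norm or multiplier construction like the \BDF analysis of~\cite{AltMU26}. That is precisely the machinery \Cref{sec:semiexplicit} sets out to avoid.

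\emph{Truncating the horizon.} This does not help either. Parseval always returns an $\ell^2$-in-time sum, and that gives the pointwise rate only if the unweighted sum $\sum_m\|E^m\|_{c,s}^2$ is bounded by $\tau\sum_m\|D^m\|_{(\cQ^*)^s}^2$. Read literally, \Cref{thm:stability} divided by $\tau$ asserts exactly this. In that reading the half order is lost only in the bookkeeping $\tau^2N\tau^{2(q+1)}=T\tau^{2q+3}\lesssim\tau^{2q+2}$ in the proof of \Cref{thm:convergence}. But that scaling is not valid. By \eqref{eq:delta:identities} we have $\Delta(1)\mathds{1}=0$, so for $P=\mathds{1}\otimes q$ one gets $\real\langle\cL(1)P,P\rangle_s=s\,b(q,q)$. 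This contradicts the $c_0\tau^{-1}\|P\|_{c,s}^2$ coercivity behind \eqref{eq:proof:coercivity}. Concretely, take implicit Euler without coupling, zero initial data and constant $G^n=g$. Then $P^n$ approximates the solution of $\cC\dot p+\cB p=g$, $p(0)=0$, so $\tau\sum_n\|P^n\|_c^2$ stays bounded away from zero as $\tau\to 0$, while $\tau^2\sum_n\|G^n\|_{\cQ^*}^2=T\tau\|g\|_{\cQ^*}^2=\cO(\tau)$.

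\emph{The fallback.} Interpreting the bound in the summed sense proves a weaker statement than \eqref{eq:total:error:rk}.

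What is missing, in your proposal and in the paper's one-line argument alike, is a genuine pointwise-in-time error bound for the delayed \RK scheme. Neither \Cref{thm:convergence} as stated nor your sketch supplies it.
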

\begin{remark}[Convergence rates for Radau~IIA]
\label{rem:error:balance}
For Radau~IIA-$s$, $q = s$ and $k = 2s-1$, so \Cref{cor:convergence:total} predicts the baseline orders $\min(k,\,q+1) = 1,\,3,\,4$ for $s = 1, 2, 3$.
With $\smex = 3/4 - \varepsilon$ from \Cref{rem:smoothing:exponent}, the sharpened bound predicts orders $1$, $3$, and $\approx 4.75$ for $s = 1, 2, 3$; the third rate falls short of the classical order~$k = 5$ by $1/4$, consistent with the numerically observed rates $\approx 4.5$--$4.8$ in \Cref{sec:numerics}.
\end{remark}

\section{Iterative Runge--Kutta Decoupling}
\label{sec:iterative}

This section is devoted to iterative decoupling schemes for the elliptic--parabolic system~\eqref{eq:ellpar:opt} using fixed-stress and undrained-split strategies.
Similar to the semi-explicit schemes studied in \Cref{sec:semiexplicit}, iterative schemes decouple the fully coupled system~\eqref{eq:ellpar:opt} by solving the two equations alternatingly: Given approximations $u^{(i-1)}$, $p^{(i-1)}$ from the previous iteration, the next iterates $u^{(i)}$, $p^{(i)}$ are computed by solving a sequence of two subproblems. The advantage of the iterative decoupling methods is that they use a stabilization parameter to avoid a restriction on the coupling condition as for semi-explicit schemes. 
\begin{assumption}[\RK method for iterative decoupling]
\label{ass:iterative:rk}
In addition to \Cref{ass:rk:stability}, the $s$-stage \RK method $(\RKA, \RKb)$ is \emph{algebraically stable}, i.e., $\diag(\RKb)\,\RKA + \RKA^\T\!\diag(\RKb) - \RKb\RKb^\T \succeq 0$, where $\succeq$ denotes positive semidefiniteness, with weights $\RKb_\ell > 0$ for all $\ell$; see~\cite[Ch.~IV, Def.~12.1]{HaiW96}.
\end{assumption}
\begin{remark}
As already discussed in \Cref{rem:rk:methods}, \Cref{ass:iterative:rk} is satisfied by the Radau~IIA methods with $s = 1, 2, 3$ stages.
\end{remark}
In the following, we apply an $s$-stage \RK method $(\RKA, \RKb)$ to an iterative scheme in order to obtain a fully discrete method.
On each time interval $[t^{n-1}, t^n]$ of size $\tau$, we denote the \RK stage values at iteration~$i$ by
\[
	U^{n,i} 
	= [U_1^{n,i}, \ldots, U_s^{n,i}]^\T \in \cV^s
	\quad\text{and}\quad
	P^{n,i} 
	= [P_1^{n,i}, \ldots, P_s^{n,i}]^\T \in \cQ^s
\]
with corresponding stage derivatives
$\dot{U}^{n,i}$, $\dot{P}^{n,i}$
given by the identity~\eqref{eq:rk:stage:deriv:reuse}.
The iteration at each time step is initialized by
$U^{n,0} = \mathds{1}\, u^{n-1}$ and $P^{n,0} = \mathds{1}\, p^{n-1}$,
i.e., all stages are set to the solution from the previous time step.

Since $\cC^{-1}\cM$ is self-adjoint, non-negative on $\cHQ$,
and compact, the spectral theorem (see, e.g.,~\cite[Th.~6.11]{Bre11})
yields eigenpairs $(\mu_j, \phi_j)$ with
\begin{equation}
	\label{eq:spectral:M}
	\cM \phi_{j} = \mu_j\cC\phi_{j}, \qquad
	c(\phi_j,\phi_{j'})
	= \langle \cC \phi_j,\phi_{j'}\rangle 
	= \delta_{jj'}, \qquad
	0 \leq \mu_j \leq \omega.
\end{equation}
This provides an orthonormal basis of $\cHQ$ with respect to the $c$-norm.
In addition to the unweighted stage-product norms introduced in \Cref{subsec:rk}, the iterative analysis uses the $\RKb$-weighted variants
\begin{gather*}
	\Vert V \Vert_{a,\RKb}^2 \vcentcolon= \sum_{\ell=1}^s \RKb_\ell\,\Vert V_\ell\Vert_a^2,
	\qquad
	\Vert Q \Vert_{c,\RKb}^2 \vcentcolon= \sum_{\ell=1}^s \RKb_\ell\,\Vert Q_\ell\Vert_c^2,
	\\[2pt]
	\Vert \theta \Vert_\RKb^2 \vcentcolon= \sum_{\ell=1}^s \RKb_\ell\, \theta_\ell^2,
	\qquad
	\langle\theta, \theta'\rangle_\RKb \vcentcolon= \sum_{\ell=1}^s \RKb_\ell\, \theta_\ell\,\theta'_\ell
	\qquad \text{for } \theta, \theta' \in \R^s.
\end{gather*}
The same notation $\langle f, g\rangle_\RKb \vcentcolon= \sum_{\ell=1}^s \RKb_\ell\,\langle f_\ell, g_\ell\rangle$ is used for the stage-tensor duality pairing on $(\cX^*)^s\times\cX^s$ for $\cX \in \{\cV, \cQ\}$.
Since $\RKb_\ell > 0$ by \Cref{ass:iterative:rk}, the $\RKb$-weighted norms are equivalent to the unweighted stage-product versions with constants $\min_\ell \RKb_\ell$ and $\max_\ell \RKb_\ell$.

In the following, we introduce two splitting strategies which yield a contraction, namely fixed-stress (\Cref{subsec:fs:rk}) and undrained-split (\Cref{subsec:us:rk}). Afterwards, we establish convergence of the two iterative schemes (\Cref{subsec:convergence:iterative:rk}).

\subsection{Fixed-stress splitting}
\label{subsec:fs:rk}

The fixed-stress splitting decouples the fully coupled system by adding a stabilization term to the flow equation; see~\cite{MikW13} for the underlying idea.
In each iteration step, the flow equation is solved first for the pressure, followed by the mechanics equation to update the displacement.
In the continuous setting, at iteration~$i$, the scheme reads
\begin{subequations}
	\label{eq:ep:fs:cont}
	\begin{align}
		\cA\, u^{(i)} - \cD^{*}\, p^{(i)}
		&= f, \label{eq:ep:fs:cont:a}\\
		\cD\, \dot{u}^{(i-1)} + \cC\, \dot{p}^{(i)} + \cB\, p^{(i)}
		+ L\, \cC\, (\dot{p}^{(i)} - \dot{p}^{(i-1)})
		&= g, \label{eq:ep:fs:cont:b}
	\end{align}
\end{subequations}
where $L > 0$ is a stabilization parameter. 
Applying the \RK method to~\eqref{eq:ep:fs:cont}, the fixed-stress \RK scheme at time step~$n$ and iteration~$i$ reads
\begin{subequations}
	\label{eq:ep:fs:rk}
	\begin{align}
		(I_s \otimes \cA)\, U^{n,i} - (I_s \otimes \cD^{*})\, P^{n,i} 
		&= F^{n} 
		\label{eq:ep:fs:rk:a}\\
		(I_s \otimes \cD)\, {\dot U}^{n,i-1} + (I_s \otimes \cC)\, {\dot P}^{n,i} + (I_s \otimes \cB)\, P^{n,i}
		+ L\, (I_s \otimes \cC)\, ({\dot P}^{n,i} - {\dot P}^{n,i-1}) 
		&= G^{n}
		\label{eq:ep:fs:rk:b}
	\end{align}
\end{subequations}
in $(\cV^{*})^s$ and $(\cQ^{*})^s$, respectively. Note that the two equations are decoupled since one can first solve for $P^{n,i}$ with the second equation. 
\begin{theorem}[Contraction of fixed-stress \RK iteration]
\label{thm:rk:contraction:fs}
Under \Cref{ass:iterative:rk}, the fixed-stress iteration~\eqref{eq:ep:fs:rk} satisfies
\begin{equation}
	\label{eq:rk:contraction:fs}
	\Vert P^{n,i} - P^{n,i-1} \Vert_{c,\RKb}
	\leq \rho_{\mathrm{FS}} \,
	\Vert P^{n,i-1} - P^{n,i-2} \Vert_{c,\RKb}
\end{equation}
with contraction rate
\begin{equation}
	\label{eq:rk:contraction:fs:rate}
	\rho_{\mathrm{FS}} = \max_{\mu \in [0,\omega]} \frac{|L - \mu|}{1 + L}.
\end{equation}
In particular, $\rho_{\mathrm{FS}} < 1$ for all $L > \max\big(0, \tfrac{\omega-1}{2}\big)$ and the optimal choice $L = \omega/2$ yields $\rho_{\mathrm{FS}} = \omega/(2 + \omega)$.
\end{theorem}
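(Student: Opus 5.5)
The plan is to study the increments of consecutive iterates. The initial values $u^{n-1}, p^{n-1}$ cancel from these increments, so the iteration reduces to a homogeneous linear recursion in the stage variables. Set
\[
\delta P^{i} \vcentcolon= P^{n,i}-P^{n,i-1},\qquad \delta U^{i} \vcentcolon= U^{n,i}-U^{n,i-1}.
\]
Subtracting \eqref{eq:ep:fs:rk:a} at iterations $i$ and $i-1$ gives $(I_s\otimes\cA)\,\delta U^i=(I_s\otimes\cD^*)\,\delta P^i$, hence
\[
(I_s\otimes\cD)\,\delta U^i=(I_s\otimes\cM)\,\delta P^i,
\]
with the Schur complement $\cM$ from \eqref{eq:ppde}. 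By \eqref{eq:rk:stage:deriv:reuse}, the stage-derivative increments are $\tau^{-1}(\RKA^{-1}\otimes \mathrm{Id})\,\delta P^i$ and $\tau^{-1}(\RKA^{-1}\otimes \mathrm{Id})\,\delta U^i$, since the $\mathds{1}p^{n-1}$ and $\mathds{1}u^{n-1}$ terms cancel. Introduce the rescaled increment $W^i \vcentcolon= (\RKA^{-1}\otimes \mathrm{Id})\,\delta P^i$. Subtracting \eqref{eq:ep:fs:rk:b} at iterations $i$ and $i-1$ and multiplying by $\tau$ should give the recursion
\[
(1+L)(I_s\otimes\cC)W^i+\tau\,(\RKA\otimes\cB)W^i=\big(I_s\otimes(L\cC-\cM)\big)W^{i-1}.
\]

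Next, I would test this identity with $\diag(\RKb)W^i$, i.e., apply the pairing $\langle\cdot,W^i\rangle_\RKb$. The first term gives $(1+L)\|W^i\|_{c,\RKb}^2$. The $\cB$-term equals $\tfrac{\tau}{2}\sum_{\ell,j}\big(\diag(\RKb)\RKA+\RKA^\T\diag(\RKb)\big)_{\ell j}\, b(W^i_j,W^i_\ell)$. By algebraic stability (\Cref{ass:iterative:rk}), this is bounded below by $\tfrac{\tau}{2}\|\sum_\ell\RKb_\ell W^i_\ell\|_b^2\ge 0$, so it can be dropped. For the right-hand side, I would expand each stage of $W^{i-1}$ and $W^i$ in the $c$-orthonormal eigenbasis \eqref{eq:spectral:M}. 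In that basis $L\cC-\cM$ acts diagonally with multipliers $L-\mu_j$, so
\[
|\langle (L\cC-\cM)x,y\rangle|\le \max_{\mu\in[0,\omega]}|L-\mu|\,\|x\|_c\|y\|_c .
\]
Cauchy--Schwarz in the $\RKb$-weighted sum then yields $(1+L)\|W^i\|_{c,\RKb}\le \max_\mu|L-\mu|\,\|W^{i-1}\|_{c,\RKb}$, which is the contraction with the rate $\rho_{\mathrm{FS}}$ of \eqref{eq:rk:contraction:fs:rate}.

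The remaining claims are elementary. The function $\mu\mapsto|L-\mu|$ is maximised at an endpoint of $[0,\omega]$, so $\rho_{\mathrm{FS}}=\max(L,|\omega-L|)/(1+L)$. This is $<1$ if and only if $\omega-L<1+L$, i.e.\ $L>(\omega-1)/2$ (the inequality $L<1+L$ always holds). Balancing $L=\omega-L$ gives the optimum $L=\omega/2$ with $\rho_{\mathrm{FS}}=(\omega/2)/(1+\omega/2)=\omega/(2+\omega)$.

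The main obstacle is the choice of variable in which the contraction is measured. The argument above contracts in $\|(\RKA^{-1}\otimes \mathrm{Id})\,\delta P\|_{c,\RKb}$, not directly in $\|\delta P\|_{c,\RKb}$. To obtain \eqref{eq:rk:contraction:fs} in exactly that norm, I would first try testing the unscaled recursion with $\diag(\RKb)\,\delta P^i$. Algebraic stability, conjugated by $\RKA^{-1}$, gives
\[
\RKA^{-\T}\diag(\RKb)+\diag(\RKb)\RKA^{-1}\succeq \RKA^{-\T}\RKb\RKb^\T\RKA^{-1},
\]
so the $\cC$-terms stay nonnegative. The difficulty is that this route produces the weighted form $\langle (\diag(\RKb)\RKA^{-1}\otimes\cC)\,\delta P,\delta P\rangle$ on both sides, and comparing it with $\|\delta P\|_{c,\RKb}^2$ without losing the clean factor $\rho_{\mathrm{FS}}$ is not immediate. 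If this fails, I would state the contraction for $W^i$ and note that it is equivalent to contraction of $\delta P^i$ up to the fixed, $\tau$-independent constant $\|\RKA\|\,\|\RKA^{-1}\|$ (after accounting for the $\RKb$-weights), so that $\|\delta P^i\|_{c,\RKb}\lesssim\rho_{\mathrm{FS}}^{\,i}$ with the same rate.
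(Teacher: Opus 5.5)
Your argument uses the same ingredients as the paper's proof. You eliminate the displacement increment through $\cM$ and let the $\mathds 1 p^{n-1}$ terms cancel. You put $\RKA$ on the $\cB$-term and drop it by algebraic stability. You bound $L\cC-\cM$ via the eigenbasis \eqref{eq:spectral:M} together with the $\RKb$-weighted Cauchy--Schwarz inequality. Your discussion of $L$ matches the paper's as well.

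\textbf{The gap.} It is the one you flag yourself, and your fallback does not close it. Norm equivalence only gives geometric decay
\[
\|\delta P^i\|_{c,\RKb}\le\kappa\,\rho_{\mathrm{FS}}^{\,i-1}\|\delta P^1\|_{c,\RKb}
\]
with some $\kappa\ge 1$ depending on $\RKA$ and $\RKb$. That is weaker than the one-step estimate \eqref{eq:rk:contraction:fs}, whose constant is exactly $\rho_{\mathrm{FS}}$. So, as written, your proposal does not establish the statement in the stated norm.

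\textbf{Why the obstacle is illusory.} The sequence $\delta P^i$ satisfies literally the same recursion as $W^i$. Do not substitute $\delta P^i=(\RKA\otimes\mathrm{Id})W^i$ and then multiply by $\tau$. Instead, left-multiply the subtracted flow equation
\[
(I_s\otimes\cB)\,\delta P^i+\tfrac{1+L}{\tau}(\RKA^{-1}\otimes\cC)\,\delta P^i=\tfrac{1}{\tau}\big(\RKA^{-1}\otimes(L\cC-\cM)\big)\,\delta P^{i-1}
\]
by $\tau\RKA\otimes\mathrm{Id}$. This gives
\[
(1+L)(I_s\otimes\cC)\,\delta P^i+\tau(\RKA\otimes\cB)\,\delta P^i=\big(I_s\otimes(L\cC-\cM)\big)\,\delta P^{i-1},
\]
which is your $W$-recursion verbatim. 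Both sequences obey it because the iteration map commutes with $\RKA^{-1}\otimes\mathrm{Id}$. Now test with $\diag(\RKb)\,\delta P^i$ and repeat your argument word for word. This yields \eqref{eq:rk:contraction:fs} in the stated norm, and it is exactly the paper's proof.

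Your alternative attempt failed only because you tested the equation before multiplying by $\RKA$. As a result $\RKA^{-1}$ sat on the $\cC$- and $(L\cC-\cM)$-terms, instead of $\RKA$ sitting on the $\cB$-term, where algebraic stability disposes of it.

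\textbf{A caveat shared with the paper.} The relation $(I_s\otimes\cD)\,\delta U^{i-1}=(I_s\otimes\cM)\,\delta P^{i-1}$ uses \eqref{eq:ep:fs:rk:a} at iterate $i-2$. For $i=2$ this fails, since the initialization $U^{n,0}=\mathds 1 u^{n-1}$, $P^{n,0}=\mathds 1 p^{n-1}$ need not satisfy that equation with data $F^n$. Strictly, the homogeneous recursion and hence the contraction hold for $i\ge 3$.
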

\begin{proof}
Define the iterate differences
\[
	\Theta_{u}^{n,i} \vcentcolon= U^{n,i} - U^{n,i-1} \in \cV^s
	\quad\text{and}\quad
	\Theta_{p}^{n,i} \vcentcolon= P^{n,i} - P^{n,i-1} \in \cQ^s
\]
as well as ${\dot \Theta}^{n,i}_{u}$ and ${\dot \Theta}^{n,i}_{p}$ accordingly. 
Subtracting~\eqref{eq:ep:fs:rk} for consecutive iterates yields
\begin{subequations}
	\label{eq:ep:fs:rk:fp}
	\begin{align}
		(I_s \otimes \cA)\, \Theta^{n,i}_{u} - (I_s \otimes \cD^{*})\, \Theta^{n,i}_{p} &= 0, \label{eq:ep:fs:rk:fp:a}\\
		(I_s \otimes \cD)\, {\dot \Theta}^{n,i-1}_{u} + (I_s \otimes \cC)\, {\dot \Theta}^{n,i}_{p} + (I_s \otimes \cB)\, \Theta^{n,i}_{p}
		+ L\, (I_s \otimes \cC)\, ({\dot \Theta}^{n,i}_{p} - {\dot \Theta}^{n,i-1}_{p}) &= 0. \label{eq:ep:fs:rk:fp:b}
	\end{align}
\end{subequations}
Eliminating $\Theta_u^{n,i}$ via~\eqref{eq:ep:fs:rk:fp:a} and using the stage derivative formula~\eqref{eq:rk:stage:deriv}, in which the $\mathds{1}\otimes p^{n-1}$ contribution cancels under the iterate subtraction, we obtain
\begin{equation}
	\label{eq:rk:fp:compact}
	\bigg(I_s \otimes \cB + \frac{1+L}{\tau}\,\RKA^{-1} \otimes \cC\bigg)\Theta^{n,i}_{p}
	= \frac{\RKA^{-1}}{\tau} \otimes (L\cC - \cM)\,\Theta^{n,i-1}_{p}.
\end{equation}
We expand $\Theta^{n,i}_p = \sum_j \theta_j^{n,i} \otimes \phi_j$ with coefficients $\theta_j^{n,i} \in \R^s$ using the eigenpairs $(\mu_j, \phi_j)$ from~\eqref{eq:spectral:M}. 
Note that this expansion diagonalizes the $\cC$- and $\cM$-terms
(by orthonormality and $\cM\phi_j = \mu_j\cC\phi_j$),
but not the $\cB$-term.

Multiplying~\eqref{eq:rk:fp:compact} by $\RKA$ on the stage structure and testing with $(\diag(\RKb)\otimes \mathrm{Id})\,\Theta^{n,i}_p$ in the stage-tensor $\cQ$-duality pairing gives
\begin{equation}
	\label{eq:fs:test:scalar}
	\underbrace{\big\langle (\RKA\otimes\cB)\,\Theta^{n,i}_p,\, \Theta^{n,i}_p\big\rangle_\RKb}_{\eqqcolon\, T_\cB}
	+ \frac{1+L}{\tau}\,\Vert\Theta^{n,i}_p\Vert_{c,\RKb}^2
	= \frac{1}{\tau}\,\big\langle \big(I_s\otimes(L\cC - \cM)\big)\,\Theta^{n,i-1}_p,\, \Theta^{n,i}_p\big\rangle_\RKb.
\end{equation}
Writing out the stage indices, we have
\[
	T_\cB 
	= \sum_{\ell,\ell'=1}^s \RKb_{\ell'}\RKA_{\ell'\ell}\,b(\Theta^{n,i}_{p,\ell}, \Theta^{n,i}_{p,\ell'}) 
	= \tr(\diag(\RKb)\RKA\,B),
\]
where $B$ is the Gram matrix of the stage errors in the $b$-inner product, i.e., $B_{\ell\ell'} = b(\Theta^{n,i}_{p,\ell}, \Theta^{n,i}_{p,\ell'})$.
Since $\tr(\diag(\RKb)\RKA\, B) = \tr(\mathrm{sym}(\diag(\RKb)\RKA)\, B)$ and $B \succeq 0$ by the ellipticity of~$b$, the assumed algebraic stability implies $\mathrm{sym}(\diag(\RKb)\RKA) \succeq \tfrac{1}{2}\RKb\RKb^\T \succeq 0$ and, hence, $T_\cB \geq 0$.
Dropping $T_\cB$ and substituting the spectral expansion on both sides yields
\[
	\frac{1+L}{\tau}\, \sum_j \|\theta_j^{n,i}\|_{\RKb}^2
	\ \leq\ \frac{1}{\tau}\, \sum_j (L - \mu_j)\,
	\big\langle\theta_j^{n,i-1}, \theta_j^{n,i}\big\rangle_{\RKb}.
\]
The Cauchy--Schwarz inequality in the $\RKb$-inner product
(first on each $\langle\theta_j^{n,i-1}, \theta_j^{n,i}\rangle_{\RKb}$,
then on the sum over~$j$) gives 
\begin{equation}
	\label{eq:rk:fp:percomp}
	\Big(\sum_j \|\theta_j^{n,i}\|_{\RKb}^2\Big)^{1/2}
	\leq\, \frac{\max_{\mu \in [0,\omega]} |L - \mu|}{1 + L}\,
	\Big(\sum_j \|\theta_j^{n,i-1}\|_{\RKb}^2\Big)^{1/2}.
\end{equation}

Introducing the contraction rate $\rho_{\mathrm{FS}}$ as in~\eqref{eq:rk:contraction:fs:rate} and noting that $\Vert \Theta^{n,i}_p \Vert_{c,\RKb}^2 = \sum_j \|\theta_j^{n,i}\|_{\RKb}^2$ via the $c$-orthonormality of~$\{\phi_j\}$, estimate~\eqref{eq:rk:fp:percomp} becomes
\begin{equation}
	\label{eq:rk:contraction:fs:final}
	\Vert \Theta^{n,i}_{p} \Vert_{c,\RKb}
	\leq \rho_{\mathrm{FS}} \, \Vert \Theta^{n,i-1}_{p} \Vert_{c,\RKb},
\end{equation}
which is~\eqref{eq:rk:contraction:fs}.
This is a contraction provided $\rho_{\mathrm{FS}} < 1$,
which imposes conditions on the stabilization parameter~$L$:
\begin{itemize}[itemsep=0.2em]
	\item For $L \geq \omega/2$: $\max_{\mu \in [0,\omega]} |L - \mu| = L$, giving $\rho_{\mathrm{FS}} = L/(1+L) < 1$ for all $L \geq \omega/2$.
	\item For $L < \omega/2$: $\max_{\mu \in [0,\omega]} |L - \mu| = \omega - L$, giving $\rho_{\mathrm{FS}} = (\omega - L)/(1+L) < 1$ if and only if $L > (\omega - 1)/2$.
	\item The minimum of $\rho_{\mathrm{FS}}$ over $L \geq 0$ is attained at $L = \omega/2$, yielding $\rho_{\mathrm{FS}} = \omega/(2 + \omega)$.
\end{itemize}
In particular, $\rho_{\mathrm{FS}} < 1$ for all $L > \max\big(0, \tfrac{\omega-1}{2}\big)$.
\end{proof}
\begin{remark}[Case $L = 0$]
\label{rem:fs:L:zero}
Without stabilization, i.e., for $L=0$, we obtain $\rho_{\mathrm{FS}} = \omega$. Hence, the iteration is contractive only for $\omega < 1$. 
\end{remark}
The proven contraction of the pressure differences in~\eqref{eq:rk:contraction:fs} translates -- via the elliptic coupling and a stopping criterion argument -- into convergence of the displacement as well as the pressure iterates to the fully coupled monolithic \RK solution at the optimal order in~$\tau$. The precise statement and proof are given in \Cref{subsec:convergence:iterative:rk} below. 
\subsection{Undrained-split decoupling}
\label{subsec:us:rk}

In contrast to the previous approach, the undrained-split stabilizes the mechanics equation; see~\cite{KimTJ11a, MikW13} for the underlying idea.
For this, we define the operator
\begin{equation}
	\label{eq:us:tildeM}
	\widetilde{\cM} \vcentcolon= \cD^*\cC^{-1}\cD \colon \cV \to \cV^*,
\end{equation}
where $\cC^{-1}\colon \cHQ^* \to \cHQ$ is well-defined due to the ellipticity of~$c$. Moreover, we introduce the associated \emph{semi-norm}
\begin{equation}
	\label{eq:us:seminorm}
	\vert v \vert_{\widetilde{\cM}}^2
	\vcentcolon= \big\langle \widetilde{\cM}\, v, v \big\rangle
	= \big\langle \cC^{-1}\cD\, v, \cD v \big\rangle, \qquad v \in \cV,
\end{equation}
together with its (unweighted and $\RKb$-weighted) stage-product extensions
\[
	\vert V \vert_{\widetilde{\cM},s}^2 \vcentcolon= \sum_{\ell=1}^s \vert V_\ell\vert_{\widetilde{\cM}}^2,
	\qquad
	\vert V \vert_{\widetilde{\cM},\RKb}^2 \vcentcolon= \sum_{\ell=1}^s \RKb_\ell\,\vert V_\ell\vert_{\widetilde{\cM}}^2
	\qquad \text{for } V \in \cV^s.
\]
In the continuous setting, the undrained-split scheme at iteration~$i$ reads
%
	\begin{align*}
		\cA\, u^{(i)} - \cD^{*}\, p^{(i-1)}
		+ L\,\widetilde{\cM}\, (u^{(i)} - u^{(i-1)}) 
		&= f, \\ 
		\cD\, \dot{u}^{(i)} + \cC\, \dot{p}^{(i)} + \cB\, p^{(i)} 
		&= g, 
	\end{align*}
%
where $L > 0$ is again a stabilization parameter. 
Applying a \RK method, 
the undrained-split \RK scheme at time step~$n$ and iteration~$i$ reads
\begin{subequations}
	\label{eq:ep:us:rk}
	\begin{align}
		(I_s \otimes \cA)\, U^{n,i} - (I_s \otimes \cD^{*})\, P^{n,i-1}
		+ L\,(I_s \otimes \widetilde{\cM})\, (U^{n,i} - U^{n,i-1}) &= F^{n} & &\text{in } (\cV^{*})^s, \label{eq:ep:us:rk:a}\\
		(I_s \otimes \cD)\, {\dot U}^{n,i} + (I_s \otimes \cC)\, {\dot P}^{n,i} + (I_s \otimes \cB)\, P^{n,i} &= G^{n} & &\text{in } (\cQ^{*})^s. \label{eq:ep:us:rk:b}
	\end{align}
\end{subequations}
The contraction property is subject of the following theorem.
\begin{theorem}[Contraction of undrained-split \RK iteration]
\label{thm:rk:contraction:us}
Given \Cref{ass:iterative:rk}, the undrained-split iteration~\eqref{eq:ep:us:rk} satisfies
\begin{equation}
	\label{eq:rk:contraction:us}
	\big\vert U^{n,i} - U^{n,i-1}\big\vert_{\widetilde{\cM},\RKb}
	\leq \rho_{\mathrm{US}}\,
	\big\vert U^{n,i-1} - U^{n,i-2}\big\vert_{\widetilde{\cM},\RKb}
\end{equation}
with contraction rate
\begin{equation}
	\label{eq:rk:contraction:us:rate}
	\rho_{\mathrm{US}}
	= \frac{\omega\,\max(L,\,1-L)}{1 + L\omega}.
\end{equation}
In particular, $\rho_{\mathrm{US}} < 1$ for all $L \geq \tfrac{1}{2}$ (unconditionally) and for $0 \leq L < \tfrac{1}{2}$ provided $\omega < 1/(1 - 2L)$.
The optimal choice $L = \tfrac{1}{2}$ yields $\rho_{\mathrm{US}} = \omega/(2 + \omega)$.
\end{theorem}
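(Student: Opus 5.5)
We follow the strategy of the proof of \Cref{thm:rk:contraction:fs}. Set $\Theta_u^{n,i} \vcentcolon= U^{n,i}-U^{n,i-1}$ and $\Theta_p^{n,i} \vcentcolon= P^{n,i}-P^{n,i-1}$, and subtract~\eqref{eq:ep:us:rk} for consecutive iterates. The initial contributions $\mathds{1}u^{n-1}$, $\mathds{1}p^{n-1}$ cancel in the stage derivatives~\eqref{eq:rk:stage:deriv:reuse}, which gives
\begin{align*}
	\big(I_s\otimes(\cA + L\widetilde{\cM})\big)\Theta_u^{n,i} &= (I_s\otimes\cD^*)\Theta_p^{n,i-1} + L\,(I_s\otimes\widetilde{\cM})\Theta_u^{n,i-1},\\
	\Big(I_s\otimes\cB + \tfrac{1}{\tau}\RKA^{-1}\otimes\cC\Big)\Theta_p^{n,i} &= -\tfrac{1}{\tau}\big(\RKA^{-1}\otimes\cD\big)\Theta_u^{n,i}.
\end{align*}
The second equation expresses the pressure difference through the displacement difference. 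Writing $W^{i} \vcentcolon= (I_s\otimes\cC^{-1}\cD)\Theta_u^{n,i}\in\cHQ^s$, we solve it as
\[
	\Theta_p^{n,i} = -(\mathrm{Id}-\cS)\,W^{i},
	\qquad
	\cS \vcentcolon= \big(I_s\otimes \mathrm{Id} + \tau\,\RKA\otimes\cC^{-1}\cB\big)^{-1}\big(\tau\,\RKA\otimes\cC^{-1}\cB\big).
\]
Here $\cS$ is the correction produced by the diffusion operator. With $\cB=0$ we would have $\Theta_p=-W$ exactly, which is the purely undrained situation.

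Next, we test the first equation with $(\diag(\RKb)\otimes\mathrm{Id})\Theta_u^{n,i}$ and insert this representation of $\Theta_p^{n,i-1}$. Since $\langle\cD^*Q,V\rangle = c(Q,\cC^{-1}\cD V)$, all right-hand side terms become $c$-inner products against $W^{i}$, namely
\[
	\|\Theta_u^{n,i}\|_{a,\RKb}^2 + L\,|\Theta_u^{n,i}|_{\widetilde{\cM},\RKb}^2
	= \big\langle \big((L-1)\,\mathrm{Id} + \cS\big)W^{i-1},\, W^{i}\big\rangle_{c,\RKb},
\]
where $\langle\cdot,\cdot\rangle_{c,\RKb}$ is the $\RKb$-weighted stage pairing built from $c$, and $|V|_{\widetilde{\cM},\RKb} = \|(I_s\otimes\cC^{-1}\cD)V\|_{c,\RKb}$. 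For the left-hand side we use $|v|_{\widetilde{\cM}}^2 \le \tfrac{C_d^2}{c_c}\|v\|_\cV^2 \le \omega\,\|v\|_a^2$. This yields the lower bound $(\tfrac1\omega + L)\,|\Theta_u^{n,i}|_{\widetilde{\cM},\RKb}^2$.

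If the operator $(L-1)\mathrm{Id}+\cS$ has norm at most $\max(L,1-L)$ in the $c,\RKb$ norm, then the Cauchy--Schwarz inequality gives
\[
	\Big(\tfrac1\omega + L\Big)\,|\Theta_u^{n,i}|_{\widetilde{\cM},\RKb}
	\le \max(L,1-L)\,|\Theta_u^{n,i-1}|_{\widetilde{\cM},\RKb}.
\]
Multiplying by $\omega$ gives~\eqref{eq:rk:contraction:us} with rate~\eqref{eq:rk:contraction:us:rate}. The remaining claims are elementary. One has $\rho_{\mathrm{US}}<1$ if and only if $\omega\max(L,1-L)<1+L\omega$. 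For $L\ge\tfrac12$ this always holds. For $L<\tfrac12$ it reads $\omega(1-2L)<1$. Since $\max(L,1-L)$ is minimal at $L=\tfrac12$, this choice gives $\rho_{\mathrm{US}}=\tfrac{\omega/2}{1+\omega/2}=\omega/(2+\omega)$.

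The main obstacle is the norm bound on $(L-1)\mathrm{Id}+\cS$. The key claim is that $\cS$ and $\mathrm{Id}-\cS$ both have nonnegative numerical range in the $\RKb$-weighted $c$-inner product, i.e.\ $0\preceq\cS\preceq\mathrm{Id}$ in a suitable sense. Given that, the numerical range of $(L-1)\mathrm{Id}+\cS$ lies in $[L-1,L]$, whose maximal modulus is $\max(L,1-L)$. To prove it, we would test the pressure equation multiplied by $\tau\RKA$ with $(\diag(\RKb)\otimes\mathrm{Id})\Theta_p$. Exactly as for the term $T_\cB$ in the proof of \Cref{thm:rk:contraction:fs}, \Cref{ass:iterative:rk} gives $\langle(\RKA\otimes\cB)\Theta_p,\Theta_p\rangle_\RKb\ge0$ through the trace argument with the Gram matrix. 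This immediately yields $\|\Theta_p\|_{c,\RKb}^2\le\langle W,-\Theta_p\rangle_{c,\RKb}$, hence $\|\Theta_p\|_{c,\RKb}\le\|W\|_{c,\RKb}$. This is the statement that $\mathrm{Id}-\cS$ is nonnegative and contractive, and by itself it already gives a rate with $1+L$ in place of $\max(L,1-L)$.

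The sharp constant requires controlling $\cS$ as a non-self-adjoint operator, since $\RKA$ is not symmetric. We would first try to use only the two one-sided inequalities $\real\langle\cS W,W\rangle\ge0$ and $\real\langle(\mathrm{Id}-\cS)W,W\rangle \ge \|(\mathrm{Id}-\cS)W\|^2$, both of which follow from the test above. From these we would aim to bound $\|((L-1)\mathrm{Id}+\cS)W\|$ by the maximum of $L$ and $1-L$ via an expansion of the square. If this fails, we would fall back on a stage-wise spectral argument in the eigenbasis of $\cC^{-1}\cB$ combined with the algebraic-stability matrix.
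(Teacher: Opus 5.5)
Your reduction is correct and follows the paper's proof closely. The closed equation for $\Theta_u^{n,i}$, the rewriting of the right-hand side as $\langle((L-1)\mathrm{Id}+\mathcal{S})W^{i-1},W^{i}\rangle_{c,\RKb}$, the bound $|v|_{\widetilde{\cM}}^2\le\omega\|v\|_a^2$, and the final algebra for $\rho_{\mathrm{US}}$ all match. Set $S\vcentcolon=\mathrm{Id}-\mathcal{S}=(I_s\otimes\mathrm{Id}+\tau\RKA\otimes\cC^{-1}\cB)^{-1}$. On each eigenmode $\tilde\phi_j$ of $\cC^{-1}\cB$ it acts as the paper's $S_j=(I_s+\tau\nu_j\RKA)^{-1}$, so your $(L-1)\mathrm{Id}+\mathcal{S}=L\,\mathrm{Id}-S$ is the paper's $K_j$.

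\textbf{The gap.} The estimate the whole theorem rests on, $\|(L\,\mathrm{Id}-S)W\|_{c,\RKb}\le\max(L,1-L)\|W\|_{c,\RKb}$, is left unproved. The justification you give first does not work. Because $\RKA$ is not symmetric, $S$ is not self-adjoint in the $\RKb$-weighted $c$-inner product. Bounds on the quadratic form $\langle TW,W\rangle_{c,\RKb}$ then control only the numerical radius of $T$, not its norm. For instance, $\mathcal{S}=\tfrac12\mathrm{Id}+J$ with $J$ skew satisfies $0\le\langle\mathcal{S}W,W\rangle\le\|W\|^2$ for all $W$, yet $\|\mathcal{S}-\tfrac12\mathrm{Id}\|=\|J\|$ can be arbitrarily large. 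You genuinely need the operator norm, since Cauchy--Schwarz is applied to a pairing of two different vectors, $W^{i-1}$ and $W^i$. Your fallback, expanding in the eigenbasis of $\cC^{-1}\cB$, would not supply the constant by itself either. It only reduces the question to the same norm bound for the non-symmetric $s\times s$ matrices $K_j$.

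\textbf{How to close it.} The missing step is short and uses the stronger inequality you already derived from the algebraic-stability trace argument, $\langle SW,W\rangle_{c,\RKb}\ge\|SW\|_{c,\RKb}^2$. By Cauchy--Schwarz this also gives $\|SW\|_{c,\RKb}\le\|W\|_{c,\RKb}$. Expanding the square, for $L\ge 0$,
\begin{align*}
\|(L\,\mathrm{Id}-S)W\|_{c,\RKb}^2
&= L^2\|W\|_{c,\RKb}^2-(2L-1)\|SW\|_{c,\RKb}^2-2L\bigl(\langle SW,W\rangle_{c,\RKb}-\|SW\|_{c,\RKb}^2\bigr)\\
&\le L^2\|W\|_{c,\RKb}^2-(2L-1)\|SW\|_{c,\RKb}^2 .
\end{align*}
For $L\ge\tfrac12$ the last term is nonpositive, which gives the factor $L$. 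For $L<\tfrac12$, inserting $\|SW\|_{c,\RKb}\le\|W\|_{c,\RKb}$ gives $(1-L)^2\|W\|_{c,\RKb}^2$. This is exactly the paper's argument, which it carries out modewise for each $K_j$.

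In your operator-level setting you need the inequality only for $W=W^{i-1}$. There $SW^{i-1}=-\Theta_p^{n,i-1}\in\cQ^s$, which is precisely what your test of the pressure equation controls. With this step written out, your route is complete and avoids the eigen-expansion of $\cC^{-1}\cB$ entirely, a modest simplification over the paper.
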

\begin{proof}
Set
\[
	\Theta_u^{n,i} \vcentcolon= U^{n,i} - U^{n,i-1} \in \cV^s,
	\qquad
	\Theta_p^{n,i} \vcentcolon= P^{n,i} - P^{n,i-1} \in \cQ^s,
\]
and define ${\dot\Theta}^{n,i}_u, {\dot\Theta}^{n,i}_p$ accordingly.
Subtracting~\eqref{eq:ep:us:rk} for two consecutive iterates yields
\begin{subequations}
\label{eq:ep:us:rk:fp}
\begin{align}
	(I_s \otimes \cA + L\,I_s \otimes \widetilde{\cM})\,\Theta_u^{n,i}
	&= (I_s \otimes \cD^*)\,\Theta_p^{n,i-1} + L\,(I_s \otimes \widetilde{\cM})\,\Theta_u^{n,i-1},
	\label{eq:ep:us:rk:fp:a}\\
	(I_s \otimes \cD)\,{\dot\Theta}^{n,i}_u + (I_s \otimes \cC)\,{\dot\Theta}^{n,i}_p + (I_s \otimes \cB)\,\Theta_p^{n,i}
	&= 0.
	\label{eq:ep:us:rk:fp:b}
\end{align}
\end{subequations}
Using the stage-derivative formula~\eqref{eq:rk:stage:deriv} in~\eqref{eq:ep:us:rk:fp:b}, where the constant contributions $\mathds{1} \otimes u^{n-1}$ and $\mathds{1} \otimes p^{n-1}$ cancel, and multiplying by $\tau\RKA$ on the stage structure gives the compact form
\begin{equation}
	\label{eq:us:flow:compact}
	\cT\,\Theta_p^{n,i}
	= -\,(I_s \otimes \cD)\,\Theta_u^{n,i},
	\qquad
	\cT \vcentcolon= I_s\otimes\cC + \tau\RKA\otimes\cB.
\end{equation}
Evaluating~\eqref{eq:us:flow:compact} at iterate $i{-}1$ and substituting the resulting expression for $\Theta_p^{n,i-1}$ into~\eqref{eq:ep:us:rk:fp:a}, the mechanics equation becomes a closed equation in $\Theta_u^{n,i}$, namely 
\begin{equation}
	\label{eq:us:closed}
	\big(I_s \otimes \cA + L\,I_s \otimes \widetilde{\cM}\big)\,\Theta_u^{n,i}
	= L\,(I_s \otimes \widetilde{\cM})\,\Theta_u^{n,i-1}
	- (I_s\otimes\cD^*)\,\cT^{-1}\,(I_s\otimes\cD)\,\Theta_u^{n,i-1}.
\end{equation}
Testing this equation with $(\diag(\RKb) \otimes \mathrm{Id})\,\Theta_u^{n,i}$ in the stage-tensor $\cV$-duality pairing $\langle\cdot,\cdot\rangle_\RKb$ and applying $\langle\widetilde{\cM}u, v\rangle = \langle\cC^{-1}\cD u, \cD v\rangle$ on the right-hand side, we obtain
\begin{equation}
	\label{eq:us:test:scalar}
	\Vert\Theta_u^{n,i}\Vert_{a,\RKb}^2
	+ L\,\big\vert\Theta_u^{n,i}\big\vert_{\widetilde{\cM},\RKb}^2
	= \big\langle \big[L\,(I_s\otimes\cC^{-1}) - \cT^{-1}\big]\,(I_s\otimes\cD)\,\Theta_u^{n,i-1},\ (I_s\otimes\cD)\,\Theta_u^{n,i}\big\rangle_\RKb.
\end{equation}

Since $\cC^{-1}\cB \colon \cHQ \to \cHQ$ is self-adjoint and positive on the $c$-inner product (by the ellipticity of $b$), the spectral theorem yields eigenpairs $(\nu_j, \tilde\phi_j)$ with
\begin{equation}
	\label{eq:us:spectral:B}
	\cB\,\tilde\phi_j = \nu_j\,\cC\,\tilde\phi_j,
	\qquad
	c(\tilde\phi_j, \tilde\phi_k) = \delta_{jk},
	\qquad
	\nu_j \geq c_b/C_c > 0.
\end{equation}
We expand $(I_s\otimes\cD)\Theta_u^{n,i-1}$ and $(I_s\otimes\cD)\Theta_u^{n,i}$ in the basis $\{\cC\tilde\phi_j\}$ of $\cQ^*$ stage-wise, leading to 
\[
	(I_s\otimes\cD)\Theta_u^{n,i-1} 
	= \sum_j \xi_j \otimes \cC\tilde\phi_j,
	\qquad
	(I_s\otimes\cD)\Theta_u^{n,i} 
	= \sum_j \eta_j \otimes \cC\tilde\phi_j
\]
with coefficient vectors $\xi_j, \eta_j \in \R^s$ whose components are $\xi_{j,\ell} = \langle\cD\Theta^{n,i-1}_{u,\ell}, \tilde\phi_j\rangle$ and $\eta_{j,\ell} = \langle\cD\Theta^{n,i}_{u,\ell}, \tilde\phi_j\rangle$. Applying $\cT^{-1}$ and $I_s\otimes\cC^{-1}$, respectively, we obtain by~\eqref{eq:us:spectral:B} 
\[
	\cT^{-1}\,(I_s\otimes\cD)\Theta_u^{n,i-1} 
	= \sum_j (I_s + \tau\nu_j\RKA)^{-1}\xi_j \otimes \tilde\phi_j,
	\quad
	(I_s\otimes\cC^{-1})(I_s\otimes\cD)\Theta_u^{n,i-1} 
	= \sum_j \xi_j \otimes \tilde\phi_j.
\]
%
The right-hand side of~\eqref{eq:us:test:scalar} therefore becomes the diagonal sum
\begin{equation}
	\label{eq:us:rhs:spectral}
	\big\langle \big[L\,(I_s\otimes\cC^{-1}) - \cT^{-1}\big]\,(I_s\otimes\cD)\Theta_u^{n,i-1},\ (I_s\otimes\cD)\Theta_u^{n,i}\big\rangle_\RKb
	= \sum_j \big\langle K_j \xi_j,\ \eta_j\big\rangle_\RKb,
\end{equation}
where
\begin{equation*}
	K_j
	\vcentcolon= L I_s - S_j \;\in\; \R^{s\times s},
	\qquad \text{with } S_j \vcentcolon= (I_s + \tau\nu_j\RKA)^{-1}.
\end{equation*}
We claim that 
\begin{equation}
	\label{eq:us:amp:bound}
	\Vert K_j\Vert_\RKb \leq \max(L,\,1-L) \qquad \text{for every } j,
\end{equation}
where $\Vert\cdot\Vert_\RKb$ denotes the matrix norm induced by the $\RKb$-weighted inner product on $\R^s$.
The assumed algebraic stability from  \Cref{ass:iterative:rk} implies $\mathrm{sym}(\diag(\RKb)\RKA) \succeq 0$ and, hence,
\begin{equation}
	\label{eq:us:accretive}
	x^\T\!\mathrm{sym}(\diag(\RKb)\,(I_s + \tau\nu_j\RKA))\,x
	= \Vert x\Vert_\RKb^2 + \tau\nu_j\,x^\T\!\mathrm{sym}(\diag(\RKb)\RKA)\,x
	\geq \Vert x\Vert_\RKb^2 
\end{equation}
for all $x\in\R^s$. 
Inserting $x = S_j y$ in~\eqref{eq:us:accretive} and using $(I_s + \tau\nu_j\RKA)\,S_j = I_s$ yields $\langle S_j y,\,y\rangle_\RKb \geq \Vert S_j y\Vert_\RKb^2$. 
An application of Cauchy--Schwarz then yields $\Vert S_j y\Vert_\RKb \leq \Vert y\Vert_\RKb$ for every $y\in\R^s$. Taking the supremum yields the contraction property of the resolvent, namely
\[
	\Vert S_j\Vert_\RKb \leq 1.
\]
An expansion of the squared $\RKb$-norm yields
\[
	\Vert(L I_s - S_j)\,y\Vert_\RKb^2
	= L^2 \Vert y\Vert_\RKb^2 - (2L-1)\,\Vert S_j y\Vert_\RKb^2
	- 2L\,\big(\langle y, S_j y\rangle_\RKb - \Vert S_j y\Vert_\RKb^2\big),
\]
where the last term is non-negative by~\eqref{eq:us:accretive}. 
Dropping it yields
\begin{equation}
	\label{eq:us:LI-S:bound}
	\Vert K_j y \Vert_\RKb^2
	= \Vert(L I_s - S_j)\,y\Vert_\RKb^2
	\leq L^2\Vert y\Vert_\RKb^2 - (2L-1)\,\Vert S_j y\Vert_\RKb^2.
\end{equation}
A case distinction on $L$ closes~\eqref{eq:us:amp:bound},  
\begin{itemize}[itemsep=0.2em]
	\item $L \geq \tfrac{1}{2}$: $(2L-1) \geq 0$ and~\eqref{eq:us:LI-S:bound} yields $\Vert K_j y \Vert_\RKb \leq L\, \Vert y\Vert_\RKb$;
	\item $L \leq \tfrac{1}{2}$: using $\Vert S_j y\Vert_\RKb \leq \Vert y\Vert_\RKb$ in~\eqref{eq:us:LI-S:bound} yields $\Vert K_j y\Vert_\RKb \leq (1-L)\Vert y\Vert_\RKb$.
\end{itemize}

By the $c$-orthonormality of $\{\tilde\phi_j\}$, Parseval's identity gives
\begin{equation*}
	\sum_j \Vert\xi_j\Vert_\RKb^2 = \big\vert\Theta_u^{n,i-1}\big\vert_{\widetilde{\cM},\RKb}^2,
	\qquad
	\sum_j \Vert\eta_j\Vert_\RKb^2 = \big\vert\Theta_u^{n,i}\big\vert_{\widetilde{\cM},\RKb}^2.
\end{equation*}
Applying Cauchy--Schwarz to each summand of~\eqref{eq:us:rhs:spectral} as well as to the sum over the modes, we conclude 
\begin{equation}
	\label{eq:us:rhs:bound}
	\bigg|\sum_j \langle K_j \xi_j,\,\eta_j\rangle_\RKb\bigg|
	\leq \max(L,\,1-L)\,\big\vert\Theta_u^{n,i-1}\big\vert_{\widetilde{\cM},\RKb}\,
	\big\vert\Theta_u^{n,i}\big\vert_{\widetilde{\cM},\RKb}.
\end{equation}
With the coupling strength $\omega$ from~\eqref{eqn:coupling:strength}, we have the Rayleigh inequality
\begin{equation*}
	\big\vert v\big\vert_{\widetilde{\cM}}^2
	= \big\Vert \cC^{-1/2}\cD\,v\big\Vert^2
	\leq \omega\,\Vert v\Vert_a^2. 
\end{equation*}
Weighted stage-wise by $\RKb$, this lifts to
\begin{equation}
	\label{eq:us:rayleigh:stage}
	\vert\Theta_u^{n,i}\vert_{\widetilde{\cM},\RKb}^2 \,\leq\, \omega\,\Vert\Theta_u^{n,i}\Vert_{a,\RKb}^2.
\end{equation}
Now, combining \eqref{eq:us:test:scalar}, \eqref{eq:us:rhs:spectral}, \eqref{eq:us:rhs:bound}, and~\eqref{eq:us:rayleigh:stage} 
gives
\[
	\frac{1+L\omega}{\omega}\,\big\vert\Theta_u^{n,i}\big\vert_{\widetilde{\cM},\RKb}
	\,\leq\, \max(L,\,1-L)\,\big\vert\Theta_u^{n,i-1}\big\vert_{\widetilde{\cM},\RKb},
\]
which is~\eqref{eq:rk:contraction:us} with rate $\rho_{\mathrm{US}}$ as defined in~\eqref{eq:rk:contraction:us:rate}.
The conditions for $\rho_{\mathrm{US}} < 1$ follow from a case analysis of
the stabilization parameter~$L$:
\begin{itemize}[itemsep=0.2em]
	\item $L \geq \tfrac{1}{2}$: $\rho_{\mathrm{US}} = L\omega/(1+L\omega) < 1$ unconditionally;
	\item $0 \leq L < \tfrac{1}{2}$: $\rho_{\mathrm{US}} = (1-L)\omega/(1+L\omega) < 1$ iff $\omega < 1/(1-2L)$.
\end{itemize}
The minimum is attained at $L = \tfrac{1}{2}$, yielding $\rho_{\mathrm{US}} = \omega/(2+\omega) < 1$.
\end{proof}

\begin{remark}[Case $L = 0$]
\label{rem:us:L:zero}
Without stabilization, i.e., for $L=0$, the contraction constant satisfies $\rho_{\mathrm{US}}\big|_{L=0} = \omega$. Hence, the iteration is contractive only for $\omega < 1$.
\end{remark}

\begin{remark}[Contraction in a genuine norm]
\label{rem:us:genuine:norm}
The form $\vert\cdot\vert_{\widetilde{\cM}}$ vanishes on $\ker(\cD)$, so $\vert\cdot\vert_{\widetilde{\cM},\RKb}$ is, in general, only a seminorm on $\cV^s$. One can show, however, that the iterates $\Theta_u^{n,i}$ live in a subspace on which $\vert\cdot\vert_{\widetilde{\cM},\RKb}$ is a norm equivalent to $\Vert\cdot\Vert_{a,\RKb}$, so that~\eqref{eq:rk:contraction:us} is indeed a genuine norm contraction.
\end{remark}

\subsection{Convergence of iterative \RK splittings}
\label{subsec:convergence:iterative:rk}

As shown in the previous subsections, the fixed-stress (\Cref{thm:rk:contraction:fs}) as well as the undrained-split
(\Cref{thm:rk:contraction:us}) iterations provide a contraction with rate
$\rho < 1$ in their respective norms if the stabilization parameter is chosen appropriately.
The convergence argument is identical for both splittings.
We write it generically using $\Vert\cdot\Vert_*$ acting on $\Theta^{n,i} \vcentcolon= X^{n,i} - X^{n,i-1}$ with
\begin{itemize}[itemsep=0.2em]
	\item $X = P$ and $\Vert\cdot\Vert_* = \Vert\cdot\Vert_{c,\RKb}$, $\rho = \rho_{\mathrm{FS}}$ for fixed-stress, 
	\item $X = U$ with $\Vert\cdot\Vert_* = \vert\cdot\vert_{\widetilde{\cM},\RKb}$, $\rho = \rho_{\mathrm{US}}$ for undrained-split.
\end{itemize}
Since $\RKb_\ell > 0$ for all~$\ell$ by \Cref{ass:iterative:rk},
the weighted norms $\Vert\cdot\Vert_{c,\RKb}$ and $\Vert\cdot\Vert_{a,\RKb}$ are equivalent
to the unweighted stage norms $\Vert\cdot\Vert_c$ and $\Vert\cdot\Vert_a$,
respectively. 
The same equivalence applies to $\vert\cdot\vert_{\widetilde{\cM},\RKb}$ versus $\vert\cdot\vert_{\widetilde{\cM},s}$.

\begin{theorem}[Convergence of iterative \RK splittings]
\label{thm:iter:convergence:rk}
Given \Cref{ass:iterative:rk,ass:smoothing},
let $u, p$ be the solutions of~\eqref{eq:ellpar:opt}
with sufficient temporal regularity,
and consider an $s$-stage \RK method of stage order~$q$ and classical order~$k$.
Let $(u^{n,J_n}, p^{n,J_n})$ denote the iterative solution
(fixed-stress or undrained-split)
after $J_n$ iterations satisfying the stopping criterion
\begin{equation}
	\label{eq:iter:stopping}
	\big\Vert \Theta^{n,J_n} \big\Vert_*
	\leq \tol.
\end{equation}
Then,
\begin{equation}
	\label{eq:conv:rk}
	\big\Vert u^{n,J_n} - u(t^n) \big\Vert^2_{\cV} + \big\Vert p^{n,J_n} - p(t^n) \big\Vert^2_{\cHQ}\,
	\lesssim\, \frac{\tol^2}{\tau^3} + \tau^{2\min(k,\,q+1+\smex)},
\end{equation}
where the hidden constant contains an exponential factor of the form $\eul^{Ct_n}$.
Setting $\tol = \tau^{\min(k,\,q+1+\smex)+3/2}$ balances both terms, yielding an overall error of order $\min(k,\,q+1+\smex)$.
\end{theorem}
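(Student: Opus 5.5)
The plan is to split the error into an iteration error and a discretization error. Let $(U^{n}_\star,P^{n}_\star)$ denote the stages of the fully coupled (monolithic) \RK scheme, i.e.\ the fixed point of~\eqref{eq:ep:fs:rk} resp.~\eqref{eq:ep:us:rk}. For the step $n$, these stages are started from the \emph{iterated} previous values $(u^{n-1,J_{n-1}},p^{n-1,J_{n-1}})$. Set $u^n_\star,p^n_\star$ via~\eqref{eq:rk:trans}. Then
\[
u^{n,J_n}-u(t^n) = \big(u^{n,J_n}-u^n_\star\big) + \big(u^n_\star - u(t^n)\big),
\]
and analogously for $p$.

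\textbf{Iteration error.} The contraction from \Cref{thm:rk:contraction:fs} resp.\ \Cref{thm:rk:contraction:us}, combined with a geometric series, gives
\[
\Vert X^{n,J_n}-X^n_\star\Vert_* \le \tfrac{\rho}{1-\rho}\,\Vert\Theta^{n,J_n}\Vert_* \le \tfrac{\rho}{1-\rho}\,\tol.
\]
For fixed-stress this controls the pressure stages in $\Vert\cdot\Vert_{c,\RKb}$. The displacement stages follow from~\eqref{eq:ep:fs:rk:a}, since $\Vert U\Vert_\cV\lesssim\Vert P\Vert_{\cHQ}$ via $\cA^{-1}\cD^*$. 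For undrained-split, I would use \Cref{rem:us:genuine:norm} to control the $U$-difference in $\Vert\cdot\Vert_{a,\RKb}$, and then the $P$-difference via~\eqref{eq:us:flow:compact}, using $\Vert\cT^{-1}\cD\Vert\lesssim 1$ in the $c$-norm, which follows from the accretivity argument~\eqref{eq:us:accretive}. The point of this step is that the iterated scheme equals the monolithic scheme plus a stage perturbation $\delta^n$ in the flow equation of size $\Vert\delta^n\Vert\lesssim \tol/\tau$: the residual of the flow equation contains $\tfrac1\tau\RKA^{-1}$ times stage differences. This residual must be propagated through the time steps, not just measured once per step.

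\textbf{Propagation.} I would treat the iterated solution as the monolithic \RK solution of~\eqref{eq:ppde} with perturbed data $G^n+\delta^n$. To do so, I would apply the stability estimate of \Cref{thm:stability} with $k$ formally replaced by the coupled case, i.e.\ $\delayOperK\equiv 1$, for which $\real(1+\mu)>0$ trivially. Equivalently, a discrete energy estimate using algebraic stability with a Gronwall argument gives the factor $\eul^{Ct_n}$. This yields a contribution
\[
\tau\sum_n \tau^{-2}\tol^2 \lesssim \tau^{-2}\tol^2 N = \tol^2/\tau^3 .
\]
The monolithic part $u^n_\star-u(t^n)$ is handled exactly as in \Cref{thm:convergence}(ii), again with no delay term. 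The stage defect has order $\tau^{q+1}$, and the resolvent-smoothing gain of \Cref{ass:smoothing} upgrades this to $\tau^{\min(k,q+1+\smex)}$. Passing from stage to grid values uses~\eqref{eq:rk:trans}: for stiffly accurate methods this is \Cref{rem:stiffly:accurate}, and in general one uses $|R(\infty)|<1$ and a geometric sum. Finally, $u$ is recovered from the elliptic equation.

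\textbf{Main obstacle and final step.} The main obstacle is the bookkeeping of the perturbation. One must show that the defect $\delta^n$ enters the stability estimate with weight $\tau^{-1}\tol$, so that the total is $\tol^2/\tau^3$ rather than something worse, while still keeping the exponential factor. I would first try the energy/Gronwall route, testing with $\diag(\RKb)$ stage by stage, as in the contraction proofs. Adding the two contributions yields~\eqref{eq:conv:rk}. Inserting $\tol=\tau^{r+3/2}$ with $r=\min(k,q+1+\smex)$ then balances the terms, since $\tol^2/\tau^3=\tau^{2r}$.
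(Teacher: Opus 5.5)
Your proposal follows essentially the same route as the paper's proof. Both use contraction plus a geometric series for an $\cO(\tol)$ stage iteration error, propagate the resulting $\tol/\tau$ perturbation of the flow equation over $N=T/\tau$ steps by a stability/Gronwall argument, invoke the Lubich--Ostermann resolvent-smoothing bound for the monolithic \RK error, and recover the remaining component from the elliptic equation; your perturbed-data formulation spells out the propagation step that the paper only asserts (and your own sum $\tau\sum_n\tau^{-2}\tol^2$ is in fact $T\tol^2/\tau^2$, so the stated $\tol^2/\tau^3$ is comfortably covered).

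The one slip is the reference solution. With stages restarted from $(u^{n-1,J_{n-1}},p^{n-1,J_{n-1}})$, the term $u^n_\star-u(t^n)$ still carries the accumulated iteration error, so it is not covered by the argument of \Cref{thm:convergence}(ii). You should instead decompose against the global monolithic solution started from the exact data, as the paper does and as your perturbation argument actually measures. There $\delta^n$ is an explicit function of the last increment, e.g.\ $\delta^n=\tfrac{1}{\tau}(\RKA^{-1}\otimes(\cM-L\cC))\Theta_p^{n,J_n}$ for fixed-stress, so the local fixed point is not even needed.
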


\begin{proof}
Let $(u^n, p^n)$ denote the exact fully coupled \RK solution at time $t^n$
and $X^n$ the corresponding stage vector ($X = P$ for fixed-stress, $X = U$ for undrained-split), so that
\begin{equation}
	\label{eq:iter:err:decomp}
	\big\Vert p^{n,J_n} - p(t^n) \big\Vert_{\cHQ}
	\le \big\Vert p^{n,J_n} - p^n \big\Vert_{\cHQ}
	+ \big\Vert p^n - p(t^n) \big\Vert_{\cHQ}
\end{equation}
and analogously for $u$.
By the contraction results
(\Cref{thm:rk:contraction:fs,thm:rk:contraction:us}), 
we have after $J_n$ iterations
\[
	\Vert X^{n,J_n} - X^{n,J_n-1} \Vert_*
	\leq \rho^{J_n-1}\,
	\Vert X^{n,1} - X^{n,0} \Vert_*.
\]
Summing the geometric series, the stopping criterion~\eqref{eq:iter:stopping} yields $\Vert X^{n,J_n} - X^n \Vert_* \,\lesssim\, \tol$. 
Since $p^{n-1}$ (resp.\ $u^{n-1}$) is fixed across iterates and stiffly-accurate \RK methods satisfy
$p^n = \RKb^\T\RKA^{-1} P^n$ (resp.\ $u^n = \RKb^\T\RKA^{-1} U^n$)
by~\eqref{eq:rk:trans},
the time-step iteration error of the contracted component is bounded by the stage iteration error,
\begin{equation}
	\label{eq:iter:solution:bound}
	\big\Vert x^{n,J_n} - x^n \big\Vert
	\leq \|\RKb^\T\RKA^{-1}\|\,
	\Vert X^{n,J_n} - X^n \Vert_*
	\lesssim \tol,
\end{equation}
where $x \in \{p,u\}$ matches the contracted component $X$ and the LHS norm is $\Vert\cdot\Vert_{\cHQ}$ for fixed-stress and $\Vert\cdot\Vert_\cV$ for undrained-split.
The error of the other component is then controlled
by the elliptic coupling $\cA u = \cD^* p + f$ and the stability of $\cA^{-1}$.
For the discretization error of the fully coupled implicit \RK scheme
for~\eqref{eq:ellpar:opt}, we do not re-derive a Fourier stability estimate. Instead, we invoke the resolvent-smoothing analysis of~\cite[Thm.~3.3]{LubO95},
which, by~\Cref{ass:smoothing}, yields
\[
	\Vert p^n - p(t^n) \Vert_{\cHQ}
	\lesssim \tau^{\min(k,\,q+1+\smex)}.
\]
The per-step iteration error~\eqref{eq:iter:solution:bound} of order $\tol$
propagates through the parabolic structure,
since the stage derivatives involve a factor $1/\tau$ (from~\eqref{eq:rk:stage:deriv}). 
A discrete Gronwall argument over $N = T/\tau$ steps hence gives
\[
	\max_{1 \leq n \leq N} \big\Vert p^{n,J_n} - p^n \big\Vert_{\cHQ}^2
	\lesssim \frac{\tol^2}{\tau^3}.
\]
Combining this with the discretization bound via~\eqref{eq:iter:err:decomp} yields
\[
	\big\Vert p^{n,J_n} - p(t^n) \big\Vert_{\cHQ}^2
	\lesssim \frac{\tol^2}{\tau^3} + \tau^{2\min(k,\,q+1+\smex)}.
\]
The choice $\tol = \tau^{\min(k,\,q+1+\smex) + 3/2}$ balances both terms, yielding the overall error $\cO(\tau^{2\min(k,\,q+1+\smex)})$.
The bound for $u$ follows from the elliptic equation $\cA u = \cD^* p + f$
and the stability of $\cA^{-1}$.
\end{proof}
To summarize, both iterative approaches reach rate $\rho = \omega/(2 + \omega)$ if the optimal stabilization parameter ($L = \omega/2$ for fixed-stress, $L = 1/2$ for undrained-split) is chosen. Hence, both methods converge rapidly for small $\omega$.

\section{Numerical Experiments}
\label{sec:numerics}

We verify the theoretical convergence rates obtained in \Cref{sec:semiexplicit,sec:iterative}
using a manufactured solution for linear poroelasticity
on the unit square~$\Omega = (0,1)^2$ with $T = 1$.
We prescribe the exact solution as
\begin{align}\label{eq:mfg:solution}
	u(t,x,y)
	&= -\eul^{-At}
	\begin{bmatrix}\sin(\pi x)\sin(\pi y)\\\sin(\pi x)\sin(\pi y)\end{bmatrix},
	&
	p(t,x,y)
	&= \eul^{-At}\,\sin(\pi x)\sin(\pi y).
\end{align}
The forcing terms are chosen accordingly. 
The decay rate
\begin{equation}\label{eq:decay:rate}
	A = \frac{2\pi^2\,\kappa/\nu}{\alpha + 1/M}
\end{equation}
depends on all material parameters of the Biot system, which are chosen as 

\begin{center}
	\begin{tabular}{@{}c@{\qquad\quad}c@{\qquad\quad}c@{\qquad\quad}c@{\qquad\quad}c@{}}
		$\lambda$ & $\mu$ & $\tfrac{\kappa}{\nu}$ & $M$ & $\alpha$\\
		\midrule
		$1$ & $0.5$ & $0.1$ & $1$ & $0.1$\\
	\end{tabular}
\end{center}

The coupling strength~$\omega$ entering the weak coupling
condition~\eqref{eqn:weakCoupling}
is estimated at the continuous level by
$\omega \approx \alpha^2 M / (2\mu) = 10^{-2}$,
which is well below $1/(2^5-1) = 1/31$,
the bound for delay order $k = 5$
(Radau~IIA-3, the highest-order method tested).
With these parameters, the decay rate~\eqref{eq:decay:rate} evaluates to
$A \approx 1.79$, giving $\eul^{-AT} \approx 0.17$ at $T = 1$.

Spatial discretization is performed using Taylor--Hood finite elements for~$(u, p)$
on a uniform triangular mesh with $h = 2^{-6}$ ($64\times 64$ cells). To ensure that temporal errors dominate over spatial discretization effects across the tested time-step range, the polynomial degree is increased with the RK order:
\begin{align*}
	\text{Radau~IIA-1}&: (P_4,P_3), &
	\text{Radau~IIA-2}&: (P_6, P_5), &
	\text{Radau~IIA-3}&: (P_7, P_6).
\end{align*}
Each method is paired with $k = 2s{-}1$ delays matching its classical order.
Recall that Radau~IIA methods are stiffly accurate, which ensures
full classical order convergence for both the algebraic
variable~$u$ and the differential variable~$p$; cf.~\cite[Ch.~VI, Thm.~1.2]{HaiW96}.
All errors are reported in the $L^\infty(0,T)$ norm,
i.e., the maximum error over all time steps.
The convergence results are summarized in \Cref{fig:conv:semiexpl,fig:conv:iter}.

Both figures include gray dotted reference lines of slopes $1$, $3$, and $5$, corresponding to the classical orders of the Radau IIA family. For benchmarking, the monolithic implicit \RK errors (dashed red) are plotted in both figures.
For the implicit scheme, Radau IIA-$1$ and IIA-$2$ attain their classical orders $1$ and $3$ in both the $\cV$- and $\cHQ$-norms. Radau IIA-$3$ attains a rate of $\approx 4.2$ for the pressure in agreement with the bound $r = \min(k,\,q+1+\smex)$ of \Cref{thm:convergence}. For the displacement, this rate can be seen only for coarse time steps before it saturates for $\tau \leq 2^{-6}$ at the spatial discretization floor.
The semi-explicit pressure errors (\Cref{fig:conv:semiexpl}) essentially coincide with the implicit ones for all three methods. The semi-explicit displacement errors, however, lie clearly above the implicit ones across all tested $\tau$, while sharing the same asymptotic rate. 
\begin{figure}
	\centering
	\begin{subfigure}[t]{.495\linewidth}
		\begin{tikzpicture}
  \begin{loglogaxis}[
    width=2.6in,
    height=2.6in,
    xmin=5.0e-03, xmax=1.0e-01,
    ymin=1.0e-14, ymax=1.0e-01,
    xtick={0.0625, 0.03125, 0.015625, 0.0078125},
    xticklabels={$2^{-4}$,$2^{-5}$,$2^{-6}$,$2^{-7}$},
    xlabel={step size $\tau$},
    ylabel={$\max_n \Vert u(t^n)-u^n \Vert_\cV$},
    xmajorgrids,
    ymajorgrids,
    cycle list name=rkconv,
    ]
    \addplot+ table[x={tau}, y={EPiRadauIIA1}] {data/conv_EPs_u_H1.dat};
    \addplot+ table[x={tau}, y={EPiRadauIIA2}] {data/conv_EPs_u_H1.dat};
    \addplot+ table[x={tau}, y={EPiRadauIIA3}] {data/conv_EPs_u_H1.dat};
    \addplot+ table[x={tau}, y={EPsRadauIIA1}] {data/conv_EPs_u_H1.dat};
    \addplot+ table[x={tau}, y={EPsRadauIIA2}] {data/conv_EPs_u_H1.dat};
    \addplot+ table[x={tau}, y={EPsRadauIIA3}] {data/conv_EPs_u_H1.dat};
    \addplot table[x={tau}, y={ORDER1}] {data/conv_EPs_u_H1.dat};
    \addplot table[x={tau}, y={ORDER3}] {data/conv_EPs_u_H1.dat};
    \addplot table[x={tau}, y={ORDER5}] {data/conv_EPs_u_H1.dat};
  \end{loglogaxis}
\end{tikzpicture}
	\end{subfigure}\hfill
	\begin{subfigure}[t]{.495\linewidth}
		\begin{tikzpicture}
  \begin{loglogaxis}[
    width=2.6in,
    height=2.6in,
    xmin=5.0e-03, xmax=1.0e-01,
    ymin=1.0e-14, ymax=1.0e-01,
    xtick={0.0625, 0.03125, 0.015625, 0.0078125},
    xticklabels={$2^{-4}$,$2^{-5}$,$2^{-6}$,$2^{-7}$},
    xlabel={step size $\tau$},
    ylabel={$\max_n \Vert p(t^n)-p^n \Vert_{\cHQ}$},
    ylabel style={at={(1.20,0.5)}},
    xmajorgrids,
    ymajorgrids,
    legend cell align={left},
    legend style={
      at={(0.5,-0.1)},
      anchor=north,
      /tikz/every even column/.append style={column sep=0.3cm},
      font=\small,
    },
    legend columns=3,
    legend to name=leg:semiexpl,
    cycle list name=rkconv,
    ]
  \addplot+ table[x={tau}, y={EPiRadauIIA1}] {data/conv_EPs_p_L2.dat};
  \addlegendentry{implicit Radau~IIA-$1$}
  \addplot+ table[x={tau}, y={EPiRadauIIA2}] {data/conv_EPs_p_L2.dat};
  \addlegendentry{implicit Radau~IIA-$2$}
  \addplot+ table[x={tau}, y={EPiRadauIIA3}] {data/conv_EPs_p_L2.dat};
  \addlegendentry{implicit Radau~IIA-$3$}
  \addplot+ table[x={tau}, y={EPsRadauIIA1}] {data/conv_EPs_p_L2.dat};
  \addlegendentry{semi-expl.\ Radau~IIA-$1$}
  \addplot+ table[x={tau}, y={EPsRadauIIA2}] {data/conv_EPs_p_L2.dat};
  \addlegendentry{semi-expl.\ Radau~IIA-$2$}
  \addplot+ table[x={tau}, y={EPsRadauIIA3}] {data/conv_EPs_p_L2.dat};
  \addlegendentry{semi-expl.\ Radau~IIA-$3$}
  \addplot table[x={tau}, y={ORDER1}] {data/conv_EPs_p_L2.dat};
  \addlegendentry{order $1$}
  \addplot table[x={tau}, y={ORDER3}] {data/conv_EPs_p_L2.dat};
  \addlegendentry{order $3$}
  \addplot table[x={tau}, y={ORDER5}] {data/conv_EPs_p_L2.dat};
  \addlegendentry{order $5$}
\end{loglogaxis}
\end{tikzpicture}
	\end{subfigure}\\[1em]
	\ref*{leg:semiexpl}
	\caption{Comparison of implicit (dashed red) and semi-explicit (solid blue) \RK discretization. 
	}
	\label{fig:conv:semiexpl}
\end{figure}

For the iterative schemes (\Cref{fig:conv:iter}), with optimal stabilizations $L=\omega/2$ (fixed-stress, cf.~\Cref{thm:rk:contraction:fs}) and $L=1/2$ (undrained-split, cf.~\Cref{thm:rk:contraction:us}), the converged pressure and displacement errors essentially coincide with the monolithic ones, confirming convergence to the fully coupled solution.
For Radau IIA-$3$, the iterative $u$-error plateaus instead of tracking the monolithic errors.

Finally, the average iteration counts presented in \Cref{tab:inner:iter:counts} only show a mild growth with decreasing $\tau$, using $\tol = \tau^{k+3/2}$.
\begin{table}[ht]
	\centering
	\caption{Average number of inner iterations per time step
	for the fixed-stress ($L=\omega/2$) and
	undrained-split ($L=1/2$) schemes,
	with stopping criterion~\eqref{eq:iter:stopping}
	and $\tol = \tau^{k+3/2}$.}
	\label{tab:inner:iter:counts}
	\begin{tabular}{lcccc}
		\toprule
		Method & $\tau = 2^{-4}$ & $\tau = 2^{-5}$ & $\tau = 2^{-6}$ & $\tau = 2^{-7}$ \\
		\midrule
		fixed-stress,  Radau~IIA-$1$ & 2.44 & 2.69 & 2.92 & 3.00 \\
		fixed-stress,  Radau~IIA-$2$ & 3.00 & 3.00 & 4.00 & 4.00 \\
		fixed-stress,  Radau~IIA-$3$ & 4.00 & 4.00 & 5.00 & 6.00 \\
		\midrule
		undrained-split, Radau~IIA-$1$ & 2.38 & 2.97 & 3.00 & 3.00 \\
		undrained-split, Radau~IIA-$2$ & 3.00 & 3.00 & 4.00 & 4.00 \\
		undrained-split, Radau~IIA-$3$ & 4.00 & 4.00 & 5.00 & 6.00 \\
		\bottomrule
	\end{tabular}
\end{table}

\begin{figure}
	\centering
	\begin{subfigure}[t]{.495\linewidth}
		\begin{tikzpicture}
  \begin{loglogaxis}[
    width=2.6in,
    height=2.6in,
    xmin=5.0e-03, xmax=1.0e-01,
    ymin=1.0e-14, ymax=1.0e-01,
    xtick={0.0625, 0.03125, 0.015625, 0.0078125},
    xticklabels={$2^{-4}$,$2^{-5}$,$2^{-6}$,$2^{-7}$},
    xlabel={step size $\tau$},
    ylabel={$\max_n \Vert u(t^n)-u^n \Vert_\cV$},
    xmajorgrids,
    ymajorgrids,
    cycle list name=rkconvIter,
    ]
    \addplot+ table[x={tau}, y={EPiRadauIIA1}] {data/conv_EPf_u_H1.dat};
    \addplot+ table[x={tau}, y={EPiRadauIIA2}] {data/conv_EPf_u_H1.dat};
    \addplot+ table[x={tau}, y={EPiRadauIIA3}] {data/conv_EPf_u_H1.dat};
    \addplot+ table[x={tau}, y={EPfRadauIIA1}] {data/conv_EPf_u_H1.dat};
    \addplot+ table[x={tau}, y={EPfRadauIIA2}] {data/conv_EPf_u_H1.dat};
    \addplot+ table[x={tau}, y={EPfRadauIIA3}] {data/conv_EPf_u_H1.dat};
    \addplot+ table[x={tau}, y={EPuRadauIIA1}] {data/conv_EPu_u_H1.dat};
    \addplot+ table[x={tau}, y={EPuRadauIIA2}] {data/conv_EPu_u_H1.dat};
    \addplot+ table[x={tau}, y={EPuRadauIIA3}] {data/conv_EPu_u_H1.dat};
    \addplot table[x={tau}, y={ORDER1}] {data/conv_EPf_u_H1.dat};
    \addplot table[x={tau}, y={ORDER3}] {data/conv_EPf_u_H1.dat};
    \addplot table[x={tau}, y={ORDER5}] {data/conv_EPf_u_H1.dat};
  \end{loglogaxis}
\end{tikzpicture}
	\end{subfigure}\hfill
	\begin{subfigure}[t]{.495\linewidth}
		\begin{tikzpicture}
  \begin{loglogaxis}[
    width=2.6in,
    height=2.6in,
    xmin=5.0e-03, xmax=1.0e-01,
    ymin=1.0e-14, ymax=1.0e-01,
    xtick={0.0625, 0.03125, 0.015625, 0.0078125},
    xticklabels={$2^{-4}$,$2^{-5}$,$2^{-6}$,$2^{-7}$},
    xlabel={step size $\tau$},
    ylabel={$\max_n \Vert p(t^n)-p^n \Vert_{\cHQ}$},
    ylabel style={at={(1.20,0.5)}},
    xmajorgrids,
    ymajorgrids,
    legend cell align={left},
    legend style={
      at={(0.5,-0.1)},
      anchor=north,
      /tikz/every even column/.append style={column sep=0.3cm},
      font=\small,
    },
    legend columns=3,
    legend to name=leg:iter,
    cycle list name=rkconvIter,
    ]
  \addplot+ table[x={tau}, y={EPiRadauIIA1}] {data/conv_EPf_p_L2.dat};
  \addlegendentry{implicit Radau~IIA-$1$}
  \addplot+ table[x={tau}, y={EPiRadauIIA2}] {data/conv_EPf_p_L2.dat};
  \addlegendentry{implicit Radau~IIA-$2$}
  \addplot+ table[x={tau}, y={EPiRadauIIA3}] {data/conv_EPf_p_L2.dat};
  \addlegendentry{implicit Radau~IIA-$3$}
  \addplot+ table[x={tau}, y={EPfRadauIIA1}] {data/conv_EPf_p_L2.dat};
  \addlegendentry{FS Radau~IIA-$1$}
  \addplot+ table[x={tau}, y={EPfRadauIIA2}] {data/conv_EPf_p_L2.dat};
  \addlegendentry{FS Radau~IIA-$2$}
  \addplot+ table[x={tau}, y={EPfRadauIIA3}] {data/conv_EPf_p_L2.dat};
  \addlegendentry{FS Radau~IIA-$3$}
  \addplot+ table[x={tau}, y={EPuRadauIIA1}] {data/conv_EPu_p_L2.dat};
  \addlegendentry{US Radau~IIA-$1$}
  \addplot+ table[x={tau}, y={EPuRadauIIA2}] {data/conv_EPu_p_L2.dat};
  \addlegendentry{US Radau~IIA-$2$}
  \addplot+ table[x={tau}, y={EPuRadauIIA3}] {data/conv_EPu_p_L2.dat};
  \addlegendentry{US Radau~IIA-$3$}
  \addplot table[x={tau}, y={ORDER1}] {data/conv_EPf_p_L2.dat};
  \addlegendentry{order $1$}
  \addplot table[x={tau}, y={ORDER3}] {data/conv_EPf_p_L2.dat};
  \addlegendentry{order $3$}
  \addplot table[x={tau}, y={ORDER5}] {data/conv_EPf_p_L2.dat};
  \addlegendentry{order $5$}
\end{loglogaxis}
\end{tikzpicture}
	\end{subfigure}\\[1em]
	\ref*{leg:iter}
	\caption{Comparicon of iterative \RK splittings schemes (fixed-stress in blue, undrained-split in green) with the monolithic implicit \RK method (dashed red).  
}
	\label{fig:conv:iter}
\end{figure}

\section{Conclusions}
\label{sec:conclusions}

We have presented a convergence analysis for decoupling \RK schemes
applied to elliptic--parabolic problems.
For the semi-explicit schemes based on a delay approximation,
we adapted the Fourier stability framework of~\cite{LubO95}
and established convergence of order~$k$ under weak coupling conditions for each delay order $k$, matching the bounds obtained for \BDF methods in~\cite{AltMU26}.
The consistency of results across \BDF and \RK time integrators
suggests that the coupling bounds are sharp and intrinsic
to the delay approximation structure.
For the iterative schemes (fixed-stress and undrained-split),
we combined contraction analysis with \RK consistency estimates,
using a spectral decomposition of the Schur complement operator
to establish the contraction property.
Future directions include the extension to nonlinear problems
and the treatment of variable time steps.

\appendix

\section*{Acknowledgments}
This project is funded by the Deutsche Forschungsgemeinschaft
(DFG, German Research Foundation) -- 467107679. BU further acknowledges funding by the Deutsche Forschungsgemeinschaft (DFG, German Research Foundation) -- Project-ID 258734477 -- SFB 1173. AM and BU acknowledge support by the Stuttgart Center for Simulation Science (SimTech).

\bibliographystyle{alpha}
\bibliography{literature}

\end{document}